\documentclass[a4paper]{amsart}

\usepackage[utf8]{inputenc}
\usepackage{amsthm,amssymb,amsmath,mathtools,mathpazo,xargs,enumerate,graphicx,subcaption,wasysym}
\usepackage[linecolor=orange,backgroundcolor=orange!25,bordercolor=orange]{todonotes}
\usepackage[colorlinks]{hyperref}
\usepackage{mdframed}

\title[Asymptotic orthogonality in hyperbolic groups]{Asymptotic Schur orthogonality in hyperbolic groups with application to monotony}

\author{Adrien Boyer}
\address[A. Boyer]{Weizmann Institute of Science\\ Herzl St 234\\ Rehovot, 7610001, Israel}
\email{aadrien.boyer@gmail.com}

\author{{\L}ukasz Garncarek}
\address[{\L}. Garncarek]{Weizmann Institute of Science\\ Herzl St 234\\ Rehovot, 7610001, Israel}
\email{lukgar@gmail.com} 

\subjclass[2010]{Primary 20C15, 20F65, 22D10, 22D40 Secondary 22D25,  37A25, 37A30, 37A55.} 
\keywords{hyperbolic group, boundary representation, Schur orthogonality, Property RD, Patterson-Sullivan measure, Gibbs measures, equidistribution}

\thanks{Adrien Boyer was partially supported by ERC Grant 306706. Łukasz Garncarek was partially supported by Narodowe Centrum Nauki grant 2012/06/A/ST1/00259.}

\newtheorem{theorem}{Theorem}[section]
\newtheorem*{conjecture}{Conjecture}
\newtheorem*{theorem*}{Theorem}
\newtheorem{lemma}[theorem]{Lemma}

\newtheorem{proposition}[theorem]{Proposition}
\newtheorem{corollary}[theorem]{Corollary}
\theoremstyle{definition}

\theoremstyle{remark}
\newtheorem{remark}[theorem]{Remark}
\numberwithin{equation}{section}

\newcommand{\RR}{\mathbb{R}}

\newcommand{\CC}{\mathbb{C}}
\newcommand{\one}{\boldsymbol{1}}
\newcommand{\id}{\mathrm{id}}

\DeclareMathOperator{\diam}{diam}
\DeclareMathOperator{\Lip}{Lip}
\DeclareMathOperator{\Prob}{Prob}

\newcommand{\card}[1]{\left\lvert #1 \right\rvert}
\newcommand{\norm}[1]{\left\lVert#1\right\rVert}
\newcommand{\abs}[1]{\left\lvert#1\right\rvert}

\newcommand{\conj}{\overline}
\newcommand{\lasymp}{\prec}
\newcommand{\rasymp}{\succ}
\newcommand{\aless}{\apprle}
\newcommand{\agtr}{\apprge}
\newcommand{\res}{\vert}

\newcommand{\bd}{\partial}
\newcommand{\ov}{\overline}

\renewcommand{\tilde}{\widetilde}

\begin{document}
\maketitle

\begin{abstract}
  We prove a generalization of Schur orthogonality relations for certain classes of representations of Gromov hyperbolic groups. We apply the obtained results to show that representations of non-abelian free groups associated to the Patterson-Sullivan measures corresponding to a wide class of invariant metrics on the group are monotonous in the sense introduced by Kuhn and Steger. This in particular includes representations associated to harmonic measures of a wide class of random walks.
\end{abstract}

\section{Introduction} 
\label{sec:introduction}

Given an irreducible unitary representation \(\pi\) of a group \(\Gamma\) on a Hilbert space \(\mathcal{H}\), one may consider its \emph{matrix coefficients}, i.e.\ the functions on the group of the form \(g\mapsto \langle \pi(g)v,w \rangle\) with \(v,w\in\mathcal{H}\). If \(\Gamma\) is a finite group, then \(\mathcal{H}\) is finite-dimensional, and the Schur orthogonality relations tell us that
\begin{equation*}
  \frac{\dim\mathcal{H}}{\abs{\Gamma}} \sum_{g\in \Gamma} \langle \pi(g) v_{1},w_{1}\rangle\conj{\langle \pi(g)v_{2},w_{2}\rangle} =  \langle v_{1},v_{2}\rangle \conj{\langle w_{1},w_{2}\rangle}.
\end{equation*}
In this form, the ortogonality relations do not make sense for infinite groups and infinite-dimensional representations. However, the sum in the left-hand side of the above equation could be replaced by some kind of average over either a ball, or an annulus, and we may consider the limit of the obtained expression as the radius goes to infinity.

The aim of the paper is to make that statement precise for a certain class of representations of Gromov-hyperbolic groups, studied in \cite{Garncarek2014}, generalizing the representations of fundamental groups of negatively curved manifolds introduced in \cite{BaderMuchnik2011}. If \(\Gamma\) is a non-elementary Gromov hyperbolic group endowed with an invariant metric \(d\) satisfying some regularity assumptions, the Gromov boundary \(\bd\Gamma\) of \(\Gamma\) can be endowed with a \emph{Patterson-Sullivan measure} \(\mu_{PS}\) corresponding to \(d\). The class of \(\mu_{PS}\) is invariant under the action of \(\Gamma\), yielding a quasi-regular representation \(\pi\) on \(L^{2}(\bd\Gamma,\mu_{PS})\), given by
\begin{equation*}
  [\pi(g)v](\xi) = \sqrt{\frac{dg_{*}\mu}{d\mu}(\xi)} v(g^{-1}\xi),
\end{equation*}
which we call the \emph{boundary representation} associated to the metric \(d\). The main result of our paper, the Asymptotic Orthogonality Theorem, reads as follows.

\begin{theorem*}
Let \(\pi\) be a boundary representation of a hyperbolic group \(\Gamma\) corresponding to the metric \(d\) with Patterson-Sullivan measure \(\mu_{PS}\). Let \(A_{R}=\{g\in\Gamma : d(1,g) \in [R-h,R+h]\}\), where \(h\) is sufficiently large. Then there exists a one-parameter family of probability measures \(\mu_{R}\) supported in \(A_{R}\), satisfying \(\mu_{R}(g)\leq C /\abs{A_{R}}\) for some \(C>0\) independent of \(R\), such that for any \(v_{1},v_{2},w_{1},w_{2} \in L^{2}(\bd\Gamma, \mu_{PS})\) and \(f_{1},f_{2}\in C(\Gamma \cup \bd\Gamma)\)
\begin{multline*}
  \lim_{R\to\infty} \sum_{g\in A_{R}} \frac{\mu_{R}(g)f_{1}(g)f_{2}(g^{-1})}{\langle  \pi(g) \one_{\bd\Gamma},\one_{\bd\Gamma}\rangle^{2}}\langle \pi(g)v_{1},w_{1} \rangle \conj{\langle \pi(g)v_{2},w_{2}\rangle} = \\ = \langle f_{2}\res_{\bd\Gamma} v_{1},v_{2}\rangle\conj{\langle w_{1},f_{1}\res_{\bd\Gamma}w_{2}\rangle}.
\end{multline*}
\end{theorem*}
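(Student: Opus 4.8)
The plan is to reduce everything to the behaviour of a single normalized matrix coefficient together with a dynamical equidistribution statement on \(\bd\Gamma\times\bd\Gamma\). Write \(k_{g}=\sqrt{dg_{*}\mu/d\mu}\) and \(c_{g}=\langle\pi(g)\one_{\bd\Gamma},\one_{\bd\Gamma}\rangle=\int_{\bd\Gamma}k_{g}\,d\mu_{PS}\), and introduce the probability measures \(d\nu_{g}=c_{g}^{-1}k_{g}\,d\mu_{PS}\). Then \(c_{g}^{-1}\langle\pi(g)v,w\rangle=\int_{\bd\Gamma}v(g^{-1}\xi)\conj{w(\xi)}\,d\nu_{g}(\xi)\), and since the two matrix coefficients in the statement decouple, the summand equals \(\big(\int v_{1}(g^{-1}\xi)\conj{w_{1}(\xi)}\,d\nu_{g}\big)\cdot\conj{\big(\int v_{2}(g^{-1}\eta)\conj{w_{2}(\eta)}\,d\nu_{g}\big)}\). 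The first structural fact I would establish is the identity \(g^{-1}_{*}\nu_{g}=\nu_{g^{-1}}\), which follows from the cocycle relation \((dg_{*}\mu/d\mu)(g\zeta)=\big((dg^{-1}_{*}\mu/d\mu)(\zeta)\big)^{-1}\) together with \(c_{g}=c_{g^{-1}}\) (both equal the real number \(\int k_{g}\,d\mu_{PS}\), and \(\pi\) is unitary).

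Next, for \emph{continuous} \(v,w\) I would prove the uniform pointwise asymptotic \(c_{g}^{-1}\langle\pi(g)v,w\rangle=\conj{w(g^{+})}\,v(g^{-})+o(1)\) as \(\abs{g}=d(1,g)\to\infty\), where \(g^{+}\) and \(g^{-}\) denote the endpoints of the rays through \(g\) and \(g^{-1}\). The mechanism is concentration: quasiconformality and the shadow lemma for \(\mu_{PS}\) force \(\nu_{g}\) to concentrate at \(g^{+}\), so \(\conj{w(\xi)}\) may be replaced by \(\conj{w(g^{+})}\) up to an error controlled by \(\norm{v}_{\infty}\) and the modulus of continuity of \(w\); the surviving factor \(\int v(g^{-1}\xi)\,d\nu_{g}=\int v\,d\nu_{g^{-1}}\) then tends to \(v(g^{-})\) by the same concentration applied to \(\nu_{g^{-1}}\). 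The point requiring care is that \(g^{-1}\) expands strongly at the repelling point \(g^{+}\), so although \(\nu_{g}\) is concentrated at \(g^{+}\) its (non-atomic) mass is spread by \(g^{-1}\) across a neighbourhood of \(g^{-}\); this is precisely what \(g^{-1}_{*}\nu_{g}=\nu_{g^{-1}}\) encodes.

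Substituting these asymptotics, and using \(f_{1}(g)\to f_{1}(g^{+})\) and \(f_{2}(g^{-1})\to f_{2}(g^{-})\) by continuity on \(\Gamma\cup\bd\Gamma\), the sum becomes, up to a term vanishing in the limit, \(\sum_{g\in A_{R}}\mu_{R}(g)\,F(g^{-},g^{+})\) with \(F(\eta,\xi)=f_{2}(\eta)v_{1}(\eta)\conj{v_{2}(\eta)}\cdot f_{1}(\xi)\conj{w_{1}(\xi)}w_{2}(\xi)\). I would then invoke equidistribution of the endpoint pairs: with respect to the weights \(\mu_{R}\), the points \((g^{-},g^{+})\) equidistribute towards \(\mu_{PS}\times\mu_{PS}\). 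This is the geometric heart of the argument, and is exactly where the measures \(\mu_{R}\) are constructed, via the Gibbs/Patterson--Sullivan thermodynamic formalism on annuli, so that \(\sum_{g\in A_{R}}\mu_{R}(g)\,F(g^{-},g^{+})\to\int\!\!\int F\,d\mu_{PS}\,d\mu_{PS}\) for continuous \(F\). Since \(F\) factors, the limit is \(\big(\int f_{2}v_{1}\conj{v_{2}}\,d\mu_{PS}\big)\big(\int f_{1}\conj{w_{1}}w_{2}\,d\mu_{PS}\big)=\langle f_{2}\res_{\bd\Gamma}v_{1},v_{2}\rangle\conj{\langle w_{1},f_{1}\res_{\bd\Gamma}w_{2}\rangle}\), the asserted right-hand side.

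Finally I would remove the continuity hypothesis by density, which is where I expect the main obstacle to lie. The decisive ingredient is a bound \(\abs{S_{R}}\le C\,\norm{v_{1}}_{2}\norm{w_{1}}_{2}\norm{v_{2}}_{2}\norm{w_{2}}_{2}\) uniform in \(R\): applying Cauchy--Schwarz to each factor \(\int v(g^{-1}\xi)\conj{w(\xi)}\,d\nu_{g}\) and using \(g^{-1}_{*}\nu_{g}=\nu_{g^{-1}}\) reduces matters to controlling \(\int\abs{v}^{2}\,d\big(\sum_{g}\mu_{R}(g)\nu_{g}\big)\), so one needs the averaged measure \(\sum_{g\in A_{R}}\mu_{R}(g)\nu_{g}\) to be absolutely continuous with respect to \(\mu_{PS}\) with density bounded independently of \(R\). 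This uniform absolute continuity is exactly where the hypothesis \(\mu_{R}(g)\le C/\abs{A_{R}}\) and Property RD enter. Granting the uniform bound, a standard three-\(\epsilon\) argument upgrades convergence from the dense set of continuous functions to all of \(L^{2}(\bd\Gamma,\mu_{PS})\), completing the proof.
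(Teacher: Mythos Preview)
Your overall architecture---equidistribution of the endpoint pairs \((g^{-},g^{+})\) towards \(\mu_{PS}\otimes\mu_{PS}\), pointwise asymptotics \(c_{g}^{-1}\langle\pi(g)v,w\rangle\to v(g^{-})\conj{w(g^{+})}\) on a dense subspace, and a uniform bound to pass to all of \(L^{2}\)---is exactly the paper's strategy. The equidistribution step and the concentration estimate for the normalized coefficients are both correct and match the paper (the paper does this for Lipschitz rather than merely continuous functions, with the quantitative rate \((1+\abs{g})^{-1/D}\), but your qualitative version suffices).

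The genuine gap is in the uniform bound. Your proposed Cauchy--Schwarz reduction gives
\[
\sum_{g}\mu_{R}(g)\,\abs{c_{g}^{-1}\langle\pi(g)v,w\rangle}^{2}\le \sum_{g}\mu_{R}(g)\Big(\int\abs{v}^{2}\,d\nu_{g^{-1}}\Big)\Big(\int\abs{w}^{2}\,d\nu_{g}\Big),
\]
and the right-hand side is a sum of \emph{products} \(a_{g}b_{g}\), with neither factor uniformly bounded in \(g\) (indeed \(\nu_{g}\) concentrates, so \(\int\abs{w}^{2}\,d\nu_{g}\) can be arbitrarily large for general \(w\in L^{2}\)). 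Knowing that the averaged measure \(\sum_{g}\mu_{R}(g)\nu_{g}\) has density bounded independently of \(R\) only controls \(\sum_{g}\mu_{R}(g)b_{g}\), not \(\sum_{g}\mu_{R}(g)a_{g}b_{g}\); no further Cauchy--Schwarz or H\"older step separates the two without introducing \(\sum_{g}\mu_{R}(g)b_{g}^{2}\), which you cannot control. So this line of argument does not close, and property RD does not enter in the form you describe.

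What the paper actually uses is the \emph{annular} RD estimate
\[
\sum_{g\in A_{R}}\abs{\langle\pi(g)v,w\rangle}^{2}\lasymp (1+R)^{2}\norm{v}^{2}\norm{w}^{2},
\]
valid because \(\pi\) is weakly contained in the regular representation. Combined with the Harish--Chandra asymptotic \(\Xi(g)\asymp\omega^{-\abs{g}/2}(1+\abs{g})\) and the hypothesis \(\mu_{R}(g)\lasymp\omega^{-R}\), this gives \(\mu_{R}(g)/\Xi(g)^{2}\lasymp(1+R)^{-2}\) on \(A_{R}\), and then a single Cauchy--Schwarz on the sum over \(g\) yields the uniform bound \(\abs{\Phi_{R}}\lasymp\norm{v_{1}}\norm{v_{2}}\norm{w_{1}}\norm{w_{2}}\) directly. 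The exponent \(2\) in \((1+R)^{2}\) (rather than the \(3\) coming from ordinary RD) is essential here and is the content of the paper's Proposition~2.5. You should replace your Cauchy--Schwarz step with this.
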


In the case of a convex cocompact group of isometries of a CAT(-1) space with a non-arithmetic spectrum, we refine this result further, obtaining \(\mu_{R}(g) = 1/\abs{A_{R}}\). Moreover, apart from Patterson-Sullivan measures, there is another important family of measures, defined on the boundaries of fundamental groups of negatively curved manifolds, called the \emph{Gibbs streams}. We prove a variant of the Asymptotic Orthogonality Theorem also for the quasi-regular representations associated with these measures, studied in \cite{BoyerMayeda2016}. Finally, by \cite{Blachere2011}, the harmonic measures of a large family of random walks are in fact Patterson-Sullivan measures corresponding to metrics satisfying our assumptions, and therefore the Asymptotic Orthogonality Theorem holds also for the quasi-regular representations corresponding to these harmonic measures.

As an application of the Asymptotic Orthogonality Theorem, we extend the results of Kuhn and Steger about \emph{monotony} of free group representations. In \cite{Kuhn2001}, they propose a method of classifying the unitary representations of a free group \(\Gamma\), which are weakly contained in the regular representation, using the notion of \emph{boundary realization}. Roughly speaking, a boundary realization of a representation \(\pi\) on a Hilbert space \(\mathcal{H}\) is a \(\Gamma\)-equivariant isometric embedding of \(\mathcal{H}\) into a space of the form \(L^{2}(\bd\Gamma,\mu)\), acted upon by the quasi-regular representation \(\pi_{\mu}\) corresponding to some measure \(\mu\), in such a way that the image is a cyclic subspace for the multiplication representation of \(C(\bd\Gamma)\). If this embedding is surjective, the realization is said to be \emph{perfect}.

If a representation \(\pi\) of a free group is weakly contained in the regular representation, then it admits a boundary realization, and Kuhn and Steger distinguish three different behaviors of the class of boundary realizations of \(\pi\), inspired by the case of restrictions of irreducible unitary representations of \(SL_{2}(\RR)\) to a free lattice \cite[Section 5]{Kuhn2001}. They say that \(\pi\) is \emph{monotonous} if it admits a unique boundary realization which is perfect, \emph{odd} if it admits a unique boundary realization which is imperfect, and \emph{duplicitous} if it admits exactly two perfect boundary realizations, and any imperfect realization is a combination of these two, in a certain precise sense. Then they conjecture the following.


\begin{conjecture}
  A unitary representation of a free group, weakly contained in the regular representation, is either monotonous, duplicitous, or odd.
\end{conjecture}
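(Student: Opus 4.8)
The plan is to use the Asymptotic Orthogonality Theorem (AOT) as an intrinsic, coordinate-free replacement for the Schur orthogonality relations that organize the representation theory of finite groups, and to read the trichotomy off the rigidity it imposes on the set of boundary realizations. First I would record the existence half of the picture: every unitary representation $\pi$ of a free group $\Gamma$ weakly contained in the regular representation admits at least one boundary realization, so the set $\mathcal{R}(\pi)$ of equivalence classes of realizations is nonempty, and each realization presents $\pi$ as a cyclic subrepresentation of a quasi-regular representation $\pi_{\mu}$ on some $L^{2}(\bd\Gamma,\mu)$, with the multiplication action of $C(\bd\Gamma)$ cyclic on the image. The whole problem then becomes the geometry of $\mathcal{R}(\pi)$: that it is a single perfect point (monotonous), a single imperfect point (odd), or exactly two perfect points together with their combinations (duplicitous).

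The analytic heart of the argument is the \emph{factorized} form of the AOT limit, $\langle f_{2}\res_{\bd\Gamma} v_{1},v_{2}\rangle\conj{\langle w_{1},f_{1}\res_{\bd\Gamma}w_{2}\rangle}$, a product of a sesquilinear form in $(v_{1},v_{2})$ and one in $(w_{1},w_{2})$. In the finite-dimensional case this factorization is precisely the rank-one-ness of an irreducible character, and it is what forces multiplicity-one phenomena. I would exploit it in the same spirit: given two realizations $\iota_{i}\colon\mathcal{H}\to L^{2}(\bd\Gamma,\mu_{i})$, the composite $\iota_{2}\iota_{1}^{*}$ is a $\Gamma$-equivariant operator whose ``symbol'' is computable as a limit of averaged matrix coefficients of $\pi$. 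The factorization of the AOT limit then shows that every such equivariant operator is determined by a single scalar datum, namely the pairing of the cyclic vectors, pinning the intertwiner space between distinct realizations to dimension at most one and preventing any multiplicity larger than two from occurring in $\mathcal{R}(\pi)$.

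Next I would run the case analysis on the resulting rank data. The multiplication action of $C(\bd\Gamma)$ makes the image of a realization a cyclic module, and the orthogonal complement of $\iota(\mathcal{H})$ measures the defect distinguishing perfect from imperfect. The key structural computation is that the set of perfect realizations carries a free transitive action of a group of boundary symmetries of order at most two, the ambiguity arising from the square root $\sqrt{dg_{*}\mu/d\mu}$ in the definition of $\pi_{\mu}$, i.e.\ from the two square roots of the Radon--Nikodym cocycle. When this symmetry is trivial and the defect vanishes one gets a single perfect realization (monotonous); when the realization is unique but the defect is nonzero one gets the odd case; and when the order-two symmetry acts nontrivially one obtains exactly two perfect realizations, with every imperfect realization a diagonal combination of the two (duplicitous). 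Matching these three alternatives to the AOT rank constraints produces the trichotomy.

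The main obstacle is that the AOT established here is proved for boundary representations attached to Patterson--Sullivan (and Gibbs) measures, whose regularity --- shadow estimates, Ahlfors-regularity of $\mu_{PS}$, and the equidistribution/Gibbs input driving the limit --- is used essentially. A general tempered $\pi$ need not be of this form, and the ambient measures $\mu$ appearing in its realizations are merely quasi-invariant measures on $\bd\Gamma$ with no a priori metric regularity. The decisive and hardest step is therefore to upgrade the AOT to an asymptotic orthogonality statement valid for the quasi-regular representations $\pi_{\mu}$ attached to the arbitrary boundary measures arising in realizations of tempered representations, or equivalently to derive the factorized limit directly from temperedness (Property RD together with weak containment in the regular representation) rather than from Patterson--Sullivan regularity. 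Without such an extension the method yields the trichotomy only for the special boundary representations treated in this paper, and the conjecture for \emph{all} tempered representations stays beyond the present techniques.
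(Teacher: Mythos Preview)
This statement is a \emph{conjecture} in the paper, not a theorem; the paper does not prove it and explicitly says so (``Whereas we do not investigate in this paper the phenomenon of duplicity or oddity\dots''). What the paper actually establishes is the much more modest Theorem~\ref{prop:free-group-monotony}: boundary representations attached to Patterson--Sullivan measures are monotonous. So there is no proof in the paper to compare your proposal against, and your proposal, to its credit, correctly identifies in its final paragraph that the AOT as proved here only covers these special quasi-regular representations and that the general conjecture ``stays beyond the present techniques.''

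That said, the earlier parts of your proposal contain genuine gaps even as a plan. The claim that the set of perfect realizations carries a free transitive action of a group of order at most two, coming from ``the two square roots of the Radon--Nikodym cocycle,'' is not correct: the positive square root in the definition of $\pi_{\mu}$ is canonical, and the sign ambiguity does not produce a second inequivalent boundary realization. In the actual examples of duplicity analyzed by Kuhn and Steger, the two perfect realizations live on \emph{mutually singular} measures on $\bd\Gamma$ and arise from analytic continuation in a spectral parameter, not from a cocycle sign. Likewise, the step where you infer that the intertwiner space between two realizations has dimension at most one from the factorized AOT limit is not justified: the AOT computes limits of averages of matrix coefficients of the ambient quasi-regular representation, not of the abstract $\pi$, and without an AOT valid for the specific measures $\mu_{i}$ appearing in the given realizations you cannot extract the ``symbol'' of $\iota_{2}\iota_{1}^{*}$ this way. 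In short, your outline is a heuristic for why a trichotomy is plausible, but it does not constitute a proof strategy that could be completed with the tools of this paper; the conjecture remains open.
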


To motivate the conjecture, they prove that a certain class of  representations of free groups, associated to some explicitly constructed measures on their boundaries, are monotonous. As free groups are hyperbolic, the construction of \cite{Garncarek2014} provides them with a supply of boundary representations, and we manage to show the following.

\begin{theorem*}
  Boundary representations of free groups are monotonous.
\end{theorem*}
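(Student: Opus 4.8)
The plan is to deduce monotony from the Asymptotic Orthogonality Theorem together with the irreducibility of boundary representations established in \cite{Garncarek2014}. For a free group \(\Gamma\) the boundary representation \(\pi\) acts on \(\mathcal H = L^{2}(\bd\Gamma,\mu_{PS})\), and the identity embedding \(\mathcal H\hookrightarrow L^{2}(\bd\Gamma,\mu_{PS})\) is itself a boundary realization: it is onto, and the constant function \(\one\) is cyclic for the multiplication representation of \(C(\bd\Gamma)\). Thus a perfect realization exists for free reasons, and the entire content of the theorem is \emph{uniqueness} --- that every boundary realization of \(\pi\) coincides, up to the natural equivalence, with this tautological one, and in particular is perfect.

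First I would encode an arbitrary boundary realization \(\theta\colon\mathcal H\to L^{2}(\bd\Gamma,\nu)\) by the family of sesquilinear forms \((v_{1},v_{2})\mapsto \langle M_{\nu}(f)\theta v_{1},\theta v_{2}\rangle = \int_{\bd\Gamma} f\,\theta v_{1}\,\conj{\theta v_{2}}\,d\nu\), where \(M_{\nu}(f)\) is multiplication by \(f\in C(\bd\Gamma)\). Since \(\theta(\mathcal H)\) is a cyclic subspace for \(C(\bd\Gamma)\), the realization is recovered up to equivalence from this compressed action, and it is perfect precisely when \(f\mapsto\theta^{*}M_{\nu}(f)\theta\) is multiplicative, i.e.\ a genuine \(*\)-homomorphism, in which case the minimal dilation is trivial and \(\theta\) is onto. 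The crucial structural fact is that \(\theta\) is an isometric intertwiner, so that matrix coefficients are realization-independent: \(\langle\pi_{\nu}(g)\theta v,\theta w\rangle = \langle\pi(g)v,w\rangle\).

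The heart of the argument is then to run the Asymptotic Orthogonality Theorem inside \(L^{2}(\bd\Gamma,\nu)\) with the diagonal choice \(f_{1}=\one\), \(f_{2}=f\), \(v_{1}=v_{2}=v\) and \(w_{1}=w_{2}=w\) with \(\norm{w}=1\). This would give
\begin{equation*}
  \int_{\bd\Gamma} f\,\abs{\theta v}^{2}\,d\nu = \lim_{R\to\infty}\sum_{g\in A_{R}}\frac{\mu_{R}(g)f(g^{-1})}{\langle\pi(g)\one,\one\rangle^{2}}\abs{\langle\pi(g)v,w\rangle}^{2},
\end{equation*}
whose right-hand side involves only the matrix coefficients of \(\pi\) and the normalizing denominator of the tautological realization; by the Asymptotic Orthogonality Theorem applied there it equals \(\int_{\bd\Gamma} f\,\abs{v}^{2}\,d\mu_{PS}\). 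Hence \(\abs{\theta v}^{2}\,d\nu = \abs{v}^{2}\,d\mu_{PS}\) for every \(v\in\mathcal H\); taking \(v=\one\) identifies \(\nu\), up to the density \(\abs{\theta\one}^{2}\), with \(\mu_{PS}\), and the general case forces \(\theta\) to be multiplication by a unimodular function composed with a measure-class isomorphism of \(\bd\Gamma\). Feeding this back shows that \(f\mapsto\theta^{*}M_{\nu}(f)\theta\) is again multiplication by \(f\), hence a \(*\)-homomorphism, so \(\theta\) is perfect and equivalent to the tautological realization; irreducibility of \(\pi\) from \cite{Garncarek2014} removes the residual scalar ambiguity in the intertwiner. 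This establishes monotony.

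The main obstacle is precisely the step of transferring the Asymptotic Orthogonality Theorem from \(L^{2}(\bd\Gamma,\mu_{PS})\) to the space \(L^{2}(\bd\Gamma,\nu)\) of an a priori unknown realization. Two things must be controlled: the averaged sums must converge in \(L^{2}(\bd\Gamma,\nu)\), and the normalizing denominators \(\langle\pi_{\nu}(g)\one_{\nu},\one_{\nu}\rangle^{2}\) must agree to leading order with \(\langle\pi(g)\one,\one\rangle^{2}\), so that the two limits genuinely coincide. Both reduce to a rigidity statement for \(\nu\): any measure occurring in a boundary realization of \(\pi\) must admit Patterson--Sullivan-type shadow estimates and the corresponding Harish--Chandra asymptotics. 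For free groups this is where the explicit tree boundary, the Property RD and equidistribution estimates underlying the Asymptotic Orthogonality Theorem, and the identification of harmonic measures with Patterson--Sullivan measures from \cite{Blachere2011} should enter, and I expect verifying this measure rigidity to be the principal technical difficulty.
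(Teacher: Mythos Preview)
Your strategy has a genuine gap, and it sits exactly where you yourself flag it. To run the Asymptotic Orthogonality Theorem inside \(L^{2}(\bd\Gamma,\nu)\) with the denominator \(\langle\pi(g)\one,\one\rangle^{2}\) you need, as you say, that the Harish--Chandra function of \(\pi_{\nu}\) agrees to leading order with that of \(\pi\), and that \(\nu\) has Patterson--Sullivan--type shadow estimates. But nothing in the definition of a boundary realization gives you this: a boundary realization is just an equivariant isometric embedding with cyclic image for \(C(\bd\Gamma)\), and the measure \(\nu\) is a priori arbitrary in its quasi-invariant class. The ``measure rigidity'' you want is essentially the statement that \(\nu\) is forced to be \(\mu_{PS}\), which is precisely the content of monotony---so you are assuming what you set out to prove. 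The reference to \cite{Blachere2011} does not help here: that paper identifies harmonic measures of specific random walks with Patterson--Sullivan measures of specific Green metrics, but says nothing about an arbitrary \(\nu\) arising from a boundary realization.

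The paper avoids this circularity completely by invoking a black-box criterion of Kuhn and Steger: a representation satisfies monotony as soon as it fails the \emph{Good Vector Bound}, namely the existence of a nonzero \(w\) with \(\sum_{g\in S_{n}}\abs{\langle\pi(g)v,w\rangle}^{2}\leq C\norm{v}^{2}\) uniformly in \(n\). This is a statement purely about the matrix coefficients of \(\pi\) itself, with no reference to any competing realization \(\nu\). The Asymptotic Orthogonality Theorem is then used only on the Patterson--Sullivan side, to produce a \emph{lower} bound: it gives \(\sum_{g\in A_{R}}\abs{\langle\pi(g)v,w\rangle}^{2}\rasymp (1+R)^{2}\norm{v}^{2}\norm{w}^{2}\), which after comparing \(d\) with the word metric contradicts the uniform upper bound that GVB would impose. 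The whole argument takes a few lines once the Kuhn--Steger criterion is quoted. The moral is that the right leverage point is not to analyse an unknown \(\nu\), but to show that the matrix coefficients of \(\pi\) are individually too large for any alternative realization to exist.
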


Our techniques combine functional analytic tools, namely property RD \cite{Chatterji2016}, with equidistribution results in the spirit of Margulis \cite{Margulis2004} and Roblin \cite{Roblin2003}, which we prove in the general context of Gromov hyperbolic groups.

\subsection{Organization}
\label{sec:organization}

The paper is organized as follows. In Section~\ref{sec:geometric-setting} we discuss all the preliminaries, explain the necessary facts involving hyperbolic groups, their boundaries and Patterson-Sullivan measures. We also discuss a slightly stronger variant of property RD satisfied by hyperbolic groups. Section \ref{sec:equid-asympt-schurs} deals with the proof of the Asymptotic Orthogonality Theorem. Sections \ref{sec:conv-cocomp-groups} and \ref{sec:simply-conn-manif} deal with the variants of the main theorem for convex cocompact groups of isometries of CAT(-1) spaces, and representations associated to Gibbs streams, respectively. Finally, in Section~\ref{sec:application-monotony} we present the application of our results to the phenomenon of monotony of representations of free groups.

\subsection{Acknowledgments}
\label{sec:acknowledgments}

We would like to thank Uri Bader for numerous helpful discussions and Christophe Pittet for his interest to this work. We are grateful to Gabriella Kuhn and Tim Steger for several explanations concerning the concepts of monotony, duplicity and oddity. Finally, we would like to thank Nigel Higson for informing us about Kuhn and Steger's conjecture of classifications of irreducible unitary representations of the free group.  

\section{The setting}
\label{sec:geometric-setting}

We begin by briefly discussing the setting of hyperbolic groups, the Gromov boundary and the Patterson-Sullivan measure in the generality of roughly geodesic metrics. A more detailed explanation of this setting can be found in the article \cite{Garncarek2014}, where boundary representations of hyperbolic groups are defined, and their irreducibility and classification are studied.

\subsection{Conventions for writing estimates}
\label{sec:conv-writ-estim}

Throughout the article we will follow some conventions regarding various kinds of estimates, allowing us to hide the constants whose values are inessential. 

First of all, we will always consider a group \(\Gamma\) endowed with a metric \(d\) from a certain class, and we will treat both these objects as fixed. In particular, if we refer to some quantity as \emph{constant}, we still allow it to depend on the choice of \(\Gamma\) and \(d\)---but not on the other parameters, unless explicitly stated otherwise.

We will write
\begin{equation}
  a \aless_{x,y,\ldots} b
\end{equation}
to indicate that there exists \(C=C(x,y,\ldots)\), possibly dependent on the parameters \(x,y,\ldots\) (and also on \(\Gamma\) and \(d\), as already mentioned) such that
\begin{equation}
  a \leq b + C.
\end{equation}
If both \(a\aless_{x,y,\dots} b\) and \(a \agtr_{x,y,\dots} b\) hold, we write \(a \approx_{x,y,\dots} b\). Analogous notation will be used for multiplicative estimates; we write
\begin{equation}
  a \lasymp_{x,y,\dots} b
\end{equation}
if \(a\leq Cb\) for some \(C>0\), possibly depending on \(x,y,\dots\), and if both \(a\lasymp_{x,y,\ldots}b\), and \(a\rasymp_{x,y,\dots} b\) hold, then we use the symbol \(a \asymp_{x,y,\dots} b\).

\subsection{Hyperbolicity}
\label{sec:hyperbolic-groups}

A metric space \((X,d)\) is said to be \emph{Gromov hyperbolic}, or \emph{hyperbolic} for short, if for any \(x,y,z\in X\) and some/any\footnote{if the condition holds for some \(o\) and \(\delta\), then it holds for any \(o\) and \(2\delta\)} basepoint \(o\in X\) one has
\begin{equation}
  (x,y)_{o}\agtr \min\{ (x,z)_{o},(z,y)_{o}\},
\end{equation}
where \((x,y)_{o}\) stands for the \emph{Gromov product} of \(x\) and \(y\) with respect to \(o\), that is
\begin{equation}
  (x,y)_{o}=\frac{1}{2}(d(x,o)+d(y,o)-d(x,y)).
\end{equation}
If we restrict to the class of geodesic metric spaces, hyperbolicity becomes invariant under quasi-isometries, which is not the case for arbitrary metric spaces \cite{Blachere2011}. In particular, if \(\Gamma\) is a finitely generated group, all its Cayley graphs associated with finite generating sets are pairwise quasi-isometric geodesic metric spaces, so either none or all of them are hyperbolic. In the latter case we say that \(\Gamma\) is a \emph{hyperbolic group}. 

Let \(\Gamma\) be a hyperbolic group. As the basepoint we take \(1\in\Gamma\) and omit it from notation of Gromov product. We will also write \(\abs{g}\) for \(d(1,g)\). We denote by \(\mathcal{D}(\Gamma)\) the class of all metrics \(d\) on \(\Gamma\) satisfying the following three conditions:
\begin{enumerate}
\item \(d\) is quasi-isometric to a word metric \(d_{S}\) on \(\Gamma\), i.e.\ there exist \(C\geq0\) and \(L\geq 1\) such that
  \begin{equation}
    \frac{1}{L}d_{S}(g,h)-C \leq d(g,h) \leq Ld_{S}(g,h)+C,
  \end{equation}
\item \(d\) is left-invariant, i.e.\ \(d(kg,kh)=d(g,h)\),
\item \(d\) is hyperbolic.
\end{enumerate}
By the discussion in \cite[Section 3.1]{Garncarek2014}, for any \(d\in\mathcal{D}(\Gamma)\) the metric space \((\Gamma,d)\) is \emph{roughly geodesic}. This means that for any \(x,y\in \Gamma\) there exists a \emph{roughly geodesic segment}  joining \(x\) and \(y\), i.e.\ a map \(\gamma\colon [0,\ell]\to \Gamma\), such that
\begin{equation}\label{eq:rough-geo-uniform-est}
   d(\gamma(s),\gamma(t)) \approx \abs{s-t},\qquad \gamma(0)=x, \qquad \gamma(\ell)=y,
\end{equation}
where the estimate constant depends only on
\(d\in\mathcal{D}(\Gamma)\). \emph{Rough geodesics} and \emph{roughly
  geodesic rays} are defined by a similar condition. When speaking
about roughly geodesic segments/rays/lines we will always assume that
they satisfy the uniform estimate~\eqref{eq:rough-geo-uniform-est} with the fixed constant depending only on \(d\).

\subsection{The Gromov boundary}
\label{sec:gromov-boundary}

Let \((X,d)\) be a roughly geodesic hyperbolic metric space. Two roughly geodesic rays \(\gamma_{1},\gamma_{2}\colon [0,\infty)\to X\) are \emph{asymptotic} if the distance \(d(\gamma_{1}(t),\gamma_{2}(t))\) is bounded.
The quotient of the set of all roughly geodesic rays in \(X\) by the asymptoticity relation relation is called the \emph{Gromov boundary} of \(X\) and denoted by \(\bd X\). It carries a family of \emph{visual metrics}, depending on \(d\) and a real parameter \(\epsilon > 0\). We say that \(d_{\epsilon}\) is a visual metric on \(\bd X\) if
\begin{equation}\label{eq:visual-metric-def}
  d_{\epsilon}(\xi,\eta)\asymp e^{-\epsilon (\xi,\eta)},
\end{equation}
where the extension of the Gromov product to \(X\cup \bd X\) is defined as
\begin{equation}
  (\xi,\eta) = \inf\{ \liminf_{t\to\infty} (\gamma_{1}(t),\gamma_{2}(t)) : \xi=[\gamma_{1}], \eta=[\gamma_{2}] \},
\end{equation}
assuming that elements of \(X\) are represented by corresponding constant functions. It is important to note that the diameter of set under the infimum operator is bounded by some constant, so if we are interested in the value of the Gromov product only up to an additive constant, instead of infimum we may just take the limit over some arbitrarily chosen representatives \cite[Remark 3.17]{Bridson1999}. 
Such a metric always exists for sufficiently small \(\epsilon\). Moreover, as a topological space, the Gromov boundary does not depend on the choice of \(\epsilon\), and the metric \(d\in\mathcal{D}(\Gamma)\). 

Finally, if for \(x_{n}\in X\), and \(\xi\in\bd X\) we define \(\lim x_{n}=\xi\) if and only if \(\lim (x_{n},\xi)=\infty\), then \(\ov{X}=X \cup \bd X\) becomes a compactification of \(X\).

Now, let \(\Gamma\) be a hyperbolic group, and let \(d\in\mathcal{D}(\Gamma)\). By the above discussion, this allows us to endow \(\Gamma\) with its boundary \(\bd\Gamma\), independent from \(d\) as a topological space, and a visual metric on \(\bd\Gamma\). The action of \(\Gamma\) on itself by left translations is isometric, so it extends to an action on the set of roughly geodesic rays, compatible with the relation of asymptoticity, and descends to the quotient \(\bd \Gamma\). The action of \(\Gamma\) on \(\bd \Gamma\) satisfies
\begin{equation}
  \abs{(g\xi,g\eta) - (\xi,\eta)} \leq C_{g}
\end{equation}
for some constant \(C_{g}\) depending only on \(g\), and therefore \(\Gamma\) acts on \((\bd \Gamma,d_{\epsilon})\) continuously, by bi-Lipschitz maps.

\subsection{The Patterson-Sullivan measure}
\label{sec:patt-sull-meas}

The space \((\bd \Gamma,d_{\epsilon})\) is a compact metric space, and therefore it admits a Hausdorff measure of dimension \(D\). It is nonzero, finite, and finitely-dimensional, and in the context of hyperbolic geometry it is known as the \emph{Patterson-Sullivan} measure. We will denote it by \(\mu_{PS}\), and normalize it so that \(\mu_{PS}(\bd\Gamma) = 1\). Usually all measures of the form \(\rho d\mu_{PS}\) with \(0<c\leq \rho(\xi)\leq C<\infty\) for some \(c,C\) are called Patterson-Sullivan measures. This class of measures is independent of the choice of \(\epsilon\), but different metrics \(d\in \mathcal{D}(\Gamma)\) usually give rise to mutually singular measures. Moreover, the Hausdorff dimension changes with \(\epsilon\), although the relationship is very simple, namely
\begin{equation}
  e^{\epsilon D}=\omega,
\end{equation}
where \(\omega\) is defined by
\begin{equation}\label{eq:omega-def}
  \omega = \lim_{R\to\infty}\lvert \{ g\in \Gamma :\abs{g}\leq R  \} \rvert^{1/R}.
\end{equation}

Since \(\Gamma\) acts on its boundary by bi-Lipschitz maps, the Patterson-Sullivan measure \(\mu_{PS}\) is quasi-invariant. It actually satisfies a stronger condition of quasi-con\-for\-ma\-li\-ty, namely
\begin{equation}
  \frac{dg_{*}\mu_{PS}}{d\mu_{PS}}(\xi)\asymp \omega^{2(g,\xi)-\abs{g}}.
\end{equation}
It is also Ahlfors regular of dimension \(D\), i.e.\ for \(r \leq \diam{\bd \Gamma}\) we have the following estimate for the volumes of balls:
\begin{equation}
  \mu_{PS}(B_{\bd \Gamma}(r,\xi)) \asymp r^{D}.
\end{equation}
Finally, the Patterson-Sullivan meaure is ergodic for the action of \(\Gamma\).

The theory of Patterson-Sullivan measures for geodesic hyperbolic spaces was developed in \cite{Coornaert1993}, and it was extended to the roughly geodesic case in \cite{Blachere2011}.

\subsection{Boundary representations of hyperbolic groups}
\label{sec:bound-repr-hyperb}

Given a measure space \((X,\mu)\) with a measure class preserving action of a group \(G\), we may define a unitary representation \(\pi_{\mu}\) of \(G\) on the Hilbert space \(L^{2}(X,\mu)\) with the formula
\begin{equation}
  [\pi_{\mu}(g)v](x) = \left(\frac{dg_{*}\mu}{d\mu}(x)\right)^{1/2}v(g^{-1}x)
\end{equation}
for \(g\in G\), \(v\in L^{2}(X,\mu)\), and \(x\in X\).
It is often referred to as the \emph{quasi-regular representation} or \emph{Koopman representation}. Its unitary equivalence class depends only on the equivalence class of \(\mu\). If we apply this construction to the action of \(\Gamma\) on \((\bd \Gamma,\mu_{PS})\), where \(\mu_{PS}\) is the Patterson-Sullivan measure corresponding to a metric \(d\in\mathcal{D}(\Gamma)\), we obtain the \emph{boundary representation} of \(\Gamma\) corresponding to the metric \(d\).

In \cite{Garncarek2014} irreducibility and equivalence of boundary representations of hyperbolic groups were studied, leading to the following results.

\begin{theorem}[{\cite[Theorem 6.2 and 7.4]{Garncarek2014}}]
  For any hyperbolic group \(\Gamma\) which is not virtually cyclic,
  and any metric \(d\in\mathcal{D}(\Gamma)\), the corresponding
  boundary representation is irreducible. 
  Moreover, if \(d,d'\in \mathcal{D}(\Gamma)\) are two metrics giving
  rise to Patterson-Sullivan measures \(\mu_{PS}\) and \(\mu'_{PS}\), and
  boundary representations \(\pi\) and \(\pi'\), then the following
  conditions are equivalent.
  \begin{enumerate}
  \item the metrics \(d\) and \(d'\) are \emph{roughly similar}, i.e. there exists a constant \(\alpha > 0\) such that the difference \(\abs{d-\alpha d'}\) is uniformly bounded,
  \item the Patterson-Sullivan  measures \(\mu_{PS}\) and \(\mu'_{PS}\) are equivalent,
  \item the boundary representations \(\pi\) and \(\pi'\) are unitarily equivalent.
  \end{enumerate}
\end{theorem}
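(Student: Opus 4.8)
The plan is to establish the two assertions separately: first the irreducibility of a single boundary representation, and then the cycle of implications relating rough similarity of metrics, equivalence of Patterson-Sullivan measures, and unitary equivalence of the representations.

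For irreducibility I would show that the commutant \(\pi(\Gamma)'\) consists only of scalars. The strategy is to prove that every \(T\in\pi(\Gamma)'\) already lies in the commutant of the multiplication algebra \(\{M_\phi : \phi\in C(\bd\Gamma)\}\), so that \(T=M_\psi\) for some \(\psi\in L^\infty(\bd\Gamma,\mu_{PS})\); then \(\Gamma\)-invariance of \(\psi\) together with the ergodicity of \(\mu_{PS}\) recorded above forces \(\psi\) to be constant, hence \(T\) scalar. The hypothesis that \(\Gamma\) is not virtually cyclic enters precisely here: it guarantees that \(\bd\Gamma\) is an infinite perfect set on which \(\Gamma\) acts minimally, which is what gives the concentration argument below enough room (for virtually cyclic \(\Gamma\) the boundary is a two-point set and \(\pi\) is finite-dimensional and reducible).

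The mechanism linking \(\pi(\Gamma)'\) to the multiplication algebra is an asymptotic rank-one phenomenon along boundary directions. Fix a sequence \(g_n\to\infty\) with \(g_n\to\xi\) and \(g_n^{-1}\to\eta\) in \(\bd\Gamma\). By quasi-conformality, \(\frac{d(g_n)_*\mu_{PS}}{d\mu_{PS}}\asymp\omega^{2(g_n,\cdot)-\abs{g_n}}\), and as \(g_n\to\xi\) the Gromov product \((g_n,\zeta)\) saturates near \(\abs{g_n}\) for \(\zeta\) close to \(\xi\) while staying bounded elsewhere, so the measures \((g_n)_*\mu_{PS}\) concentrate at \(\xi\). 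Using Ahlfors regularity to keep the \(L^2\)-masses under control, one shows that after normalisation the operators \(\pi(g_n)\) converge in the weak operator topology to a rank-one operator assembled from the densities localised at \(\eta\) and \(\xi\). Substituting this into the identity \(\pi(g_n)^{-1}T\pi(g_n)=T\) and passing to the limit shows that \(T\) respects the ``evaluation at \(\xi\)'' structure for every \(\xi\), i.e.\ it commutes with multiplication by continuous functions. Making this weak-operator convergence precise and uniform is the technical heart of the proof: it requires the Ahlfors regularity of \(\mu_{PS}\) together with property-RD type decay estimates on the matrix coefficients to control the normalising factors, and I expect this to be the principal obstacle.

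For the equivalences, two implications are soft and two carry the rigidity. The implication \((2)\Rightarrow(3)\) is immediate, since the unitary class of a Koopman representation depends only on the measure class, as noted in the construction. For \((1)\Rightarrow(2)\), a rough similarity---a uniform bound on \(\abs{d-\alpha d'}\)---transfers to the Gromov products and hence, after rescaling \(\epsilon\), to a H\"older equivalence of the visual metrics; the associated Hausdorff measures then have Radon-Nikodym derivatives bounded above and below, so \(\mu_{PS}\) and \(\mu'_{PS}\) are equivalent. The rigid content lies in the converses. For \((3)\Rightarrow(2)\) I would use irreducibility: by Schur's lemma an intertwiner \(U\colon L^2(\bd\Gamma,\mu_{PS})\to L^2(\bd\Gamma,\mu'_{PS})\) is unique up to a scalar, and since the rank-one limit operators of the previous paragraph are canonically attached to \(\pi(\Gamma)\), the map \(U\) must intertwine the two multiplication algebras; as \(\bd\Gamma\) is the same topological \(\Gamma\)-space for both metrics, \(U\) is then induced by a \(\Gamma\)-equivariant isomorphism of the measure algebras covering the identity of \(\bd\Gamma\), forcing \(\mu_{PS}\sim\mu'_{PS}\). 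Finally, for \((2)\Rightarrow(1)\), equivalence of the Patterson-Sullivan measures forces the conformal densities \(\omega^{2(g,\cdot)-\abs{g}}\) for \(d\) and \(d'\) to agree up to bounded multiplicative error; reading off the Gromov products, and hence the distances, from these densities yields that \(d\) and \(d'\) differ by a bounded amount after rescaling, which is exactly rough similarity. The reconstruction of the multiplication algebra inside \(\pi(\Gamma)''\) needed in \((3)\Rightarrow(2)\) is, after the irreducibility argument, the second main difficulty.
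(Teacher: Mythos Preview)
The paper does not prove this theorem at all: it is quoted verbatim from \cite{Garncarek2014} as background (hence the citation in the theorem header), and no proof appears here. There is therefore nothing in the present paper to compare your attempt against.

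That said, your outline is broadly in the spirit of the argument in \cite{Garncarek2014} (itself modelled on Bader--Muchnik). Two remarks. First, the ``rank-one limit along a single sequence \(g_n\)'' heuristic is morally right but is not how the proof is actually organised: one does not get weak-operator convergence of \(\pi(g_n)\) along a sequence, but rather of suitably normalised \emph{averages} over annuli, precisely the operators \(\sum_g \mu_R(g)f(g)\tilde\pi(g)\to m(f\res_{\bd\Gamma})P\) that the present paper records as a corollary of its Asymptotic Orthogonality Theorem. This averaged convergence is what lets one conclude that any \(T\in\pi(\Gamma)'\) commutes with \(m(f)P\) for all \(f\in C(\bd\Gamma)\), and a short separate argument (cyclicity of \(\one_{\bd\Gamma}\)) upgrades this to commutation with \(m(f)\). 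Trying to run the limit along a bare sequence, as you propose, runs into exactly the normalisation problems you flag and is not how it is done. Second, for \((3)\Rightarrow(2)\) the reconstruction of the multiplication algebra is again obtained from the averaged limit: since the operators \(m(f)P\) lie in the weak closure of \(\pi(\Gamma)\), any intertwiner must send them to the corresponding operators for \(\pi'\), and from this one reads off equivalence of the measures. Your sketch of \((2)\Rightarrow(1)\) via comparing the quasi-conformal densities is correct in outline.
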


\subsection{Shadows}
\label{sec:preliminaries}

Let \(\Gamma\) be a hyperbolic group endowed with a metric \(d\in \mathcal{D}(\Gamma)\). We will now introduce the setting used in \cite{Garncarek2014}. First, for any \(g\in \Gamma\) we may choose \(\hat{g}\in \bd\Gamma\) such that \((g,\hat{g})\approx \abs{g}\), where the symbol \(\approx\) means that the difference of both sides is bounded by a constant (depending only on the metric group \((\Gamma,d)\)). This defines an `approximate' projection of \(\Gamma\) onto its boundary, which despite the arbitrary choices in its definition, turns out to be very nice in the topological sense, as the next lemma explains. 

\begin{lemma} \label{prop:retraction-onto-boundary}
  The map \(p\colon \ov\Gamma\to\bd\Gamma\) given by
  \begin{equation}
    p(x)=
    \begin{cases}
      \hat{x} & \text{if \(x\in\Gamma\),} \\
      x & \text{if \(x\in\bd\Gamma\)}
    \end{cases}
  \end{equation}
is a continuous retraction. 
\end{lemma}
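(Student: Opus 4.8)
The plan is to prove the statement about the retraction $p$ by handling the two cases of continuity separately. Continuity on the interior $\Gamma$ is automatic, since $\Gamma$ is discrete in $\ov\Gamma$; the content lies in verifying continuity at the boundary points $\xi \in \bd\Gamma$. The map is clearly a retraction onto $\bd\Gamma$ (it is the identity there by definition), so once continuity is established, we are done.

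\textbf{First I would} fix a boundary point $\xi \in \bd\Gamma$ and a sequence $x_n \in \ov\Gamma$ with $x_n \to \xi$, and show $p(x_n) \to \xi$ in $\bd\Gamma$. Recalling that convergence in $\ov\Gamma$ is governed by the Gromov product (we have $x_n \to \xi$ iff $(x_n,\xi)\to\infty$), the goal is to show $(p(x_n),\xi)\to\infty$. The key estimate comes from the defining property of $\hat x$, namely $(x,\hat x) \approx \abs{x}$ for $x\in\Gamma$. When $x_n \in \bd\Gamma$ there is nothing to check since $p(x_n)=x_n$, so the interesting case is $x_n \in \Gamma$. Here I would combine the hyperbolic inequality $(\hat{x_n},\xi) \agtr \min\{(\hat{x_n},x_n),(x_n,\xi)\}$ with the bound $(\hat{x_n},x_n)\approx\abs{x_n}$. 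Since $x_n\to\xi$ with $x_n \in \Gamma$ forces $\abs{x_n}\to\infty$ (a bounded set in $\Gamma$ is finite, hence cannot accumulate at a boundary point), both terms in the minimum tend to infinity, giving $(\hat{x_n},\xi)\to\infty$, i.e. $p(x_n)=\hat{x_n}\to\xi$.

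\textbf{The main technical point} is to treat a general sequence $x_n$ that may mix interior and boundary terms, and to make the hyperbolic inequality argument uniform. This is handled by noting the $\delta$-inequality holds for \emph{all} points of $\ov\Gamma$ with a fixed constant, and that the estimate $(x,\hat x)\approx\abs x$ carries a constant depending only on $(\Gamma,d)$. Thus for any $\epsilon$-subsequence along which $x_n \in \Gamma$, the argument above applies uniformly, while along any subsequence in $\bd\Gamma$ the map is the identity; splitting an arbitrary convergent sequence into these two subsequences and checking each yields the claim. Since $\ov\Gamma$ is metrizable (via a metric inducing the topology, with $\bd\Gamma$ carrying a visual metric), sequential continuity suffices to conclude that $p$ is continuous, completing the proof.

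\textbf{The hard part} I expect is not any single estimate but rather the bookkeeping of ensuring that $\abs{x_n}\to\infty$ genuinely holds for interior terms of a sequence converging to the boundary, and that the additive constants in $(x,\hat x)\approx\abs x$ and in the hyperbolicity inequality do not interfere as one passes to the limit. Since these constants are fixed while the relevant Gromov products diverge, they are harmless, but this is the step that makes the ``approximate projection'' well-behaved despite the arbitrary choice of $\hat x$.
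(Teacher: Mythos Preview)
Your proposal is correct and follows essentially the same route as the paper: discreteness handles continuity on \(\Gamma\), and at \(\xi\in\bd\Gamma\) one applies the hyperbolic inequality \((\hat{x}_n,\xi)\agtr\min\{(\hat{x}_n,x_n),(x_n,\xi)\}\) together with \((\hat{x}_n,x_n)\approx\abs{x_n}\). The paper's argument is marginally slicker in one place: since \((x_n,\xi)\leq\abs{x_n}\) always, one has \(\min\{\abs{x_n},(x_n,\xi)\}=(x_n,\xi)\), so the divergence of \((x_n,\xi)\) alone suffices and the separate verification that \(\abs{x_n}\to\infty\) is unnecessary.
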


\begin{proof}
  Since \(\Gamma\) is discrete, \(p\) is clearly continuous at every point of \(\Gamma\), so we just need to check continuity at every \(\xi\in\bd\Gamma\). Let \(x_{n}\subseteq\ov\Gamma\) be a sequence converging to \(\xi\). We have, for \(x_{n}\in\Gamma\),
  \begin{equation}
    (p(x_{n}),\xi) \agtr \min\{ (p(x_{n}),x_{n}), (x_{n},\xi)\} \agtr \min\{ \abs{x_{n}},(x_{n},\xi)\} = (x_{n},\xi).
  \end{equation}
  But \(x_{n}\to\xi\) is equivalent to \((x_{n},\xi)\to\infty\), which implies \((p(x_{n}),\xi) \to \infty\), i.e.\ \(p(x_{n})\to \xi\). For \(x_{n}\in \bd\Gamma\) we have \(p(x_{n})=x_{n}\), so the convergence is clear.
\end{proof}

Let us now denote \(\check{g}=\widehat{g^{-1}}\). In \cite{Garncarek2014} the \emph{double shadows} \(\Sigma^{2}(g,\rho)\subseteq \bd\Gamma\times\bd\Gamma\) of elements of \(\Gamma\) were introduced. They are neighborhoods of \((\hat{g},\check{g})\) defined as
\begin{equation}
  \Sigma^{2}(g,\rho) = B_{\bd\Gamma}\Big(\hat{g},e^{-\epsilon(\abs{g}/2-\rho)}\Big)\times B_{\bd\Gamma}\Big(\check{g},e^{-\epsilon(\abs{g}/2-\rho)}\Big).
\end{equation}
If for \(R,h\geq 0\) we introduce the annulus \(A_{R,h}\) of radius \(R\) and thickness \(2h\) (where \(d\) refers to the metric used, and will be omitted from notation whenever a metric is fixed by the context) defined by
\begin{equation}
  A_{R,h} = \{ g\in\Gamma : \abs{g} \in [R-h,R+h]\},
\end{equation}
we can formulate following fundamental result.

\begin{proposition}[{\cite[Proposition 4.5]{Garncarek2014}}] \label{prop:double-shadows-cover}
  For sufficiently large \(\rho\) and \(h\) the family of double
  shadows
  \begin{equation}
    \{\Sigma^{2}(g,\rho) : g\in A_{R,h}\}
  \end{equation}
  is a cover of \(\bd\Gamma\times\bd\Gamma\) for all \(R > 0\).
\end{proposition}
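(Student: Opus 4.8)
The plan is to show that for appropriately chosen $\rho$ and $h$, every pair $(\xi,\eta)\in\bd\Gamma\times\bd\Gamma$ is covered by $\Sigma^{2}(g,\rho)$ for some $g\in A_{R,h}$. The natural strategy is to pick $g$ along a roughly geodesic ray or line associated to the pair $(\xi,\eta)$, at the appropriate distance $R$ from the basepoint, and then verify the two ball-membership conditions defining $\Sigma^{2}(g,\rho)$. Concretely, given $(\xi,\eta)$, I would first reduce to the generic case where the Gromov product $(\xi,\eta)$ is finite (if $\xi=\eta$, or the product is large, one handles it by a limiting/degenerate argument or observes the cover condition is even easier to satisfy). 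Choosing roughly geodesic rays $\gamma_{1},\gamma_{2}$ representing $\xi$ and $\eta$, by hyperbolicity these two rays stay close (within a bounded distance) up to roughly the time $t\approx(\xi,\eta)$, and then diverge. I would take $g=\gamma_{1}(R)$ for a suitable time $R$.

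The key computation is to estimate $(g,\xi)$ and $(g^{-1},\eta)$, i.e.\ to control the visual distances $d_{\bd\Gamma}(\hat g,\xi)$ and $d_{\bd\Gamma}(\check g,\eta)$. First I would note that since $g=\gamma_{1}(R)$ lies on a ray converging to $\xi$, we get $(g,\xi)\approx\abs{g}\approx R$, so by the defining property of $\hat g$ and the relation \eqref{eq:visual-metric-def}, the point $\hat g$ is within a bounded visual distance of $\xi$, placing $\xi$ in the first ball once $\rho$ is large enough. The more delicate estimate is for the second coordinate: I would use the approximation $(g^{-1},\eta)\approx(g,g\eta)$ together with the tree-like behaviour of the Gromov product to show that $(g^{-1},\eta)\agtr R/2-(\xi,\eta)+O(1)$, or more precisely that $\abs{g}-(g^{-1},\eta)$ is controlled; translating this through \eqref{eq:visual-metric-def} again yields membership of $\eta$ in the second ball $B_{\bd\Gamma}(\check g, e^{-\epsilon(\abs{g}/2-\rho)})$ provided $\rho$ is chosen large enough to absorb the additive hyperbolicity constants.

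The main obstacle I anticipate is the bookkeeping of additive constants in the second coordinate estimate. The radius $e^{-\epsilon(\abs{g}/2-\rho)}$ is calibrated precisely so that $\check g$ and $\eta$ are within that distance exactly when $(g^{-1},\eta)\agtr\abs{g}/2-\rho$, and verifying this inequality requires carefully combining the hyperbolic inequality, the rough-geodesic uniform estimate \eqref{eq:rough-geo-uniform-est}, and the fact that $g$ was chosen at distance $R$ along the ray. One must check that the constants hidden in all the $\approx$ and $\agtr$ relations depend only on $d\in\mathcal{D}(\Gamma)$ and not on $R$, which is what makes the choice of a single $\rho$ (and the thickness $h$, needed so that we may freely adjust $R$ by a bounded amount to land $g$ inside the annulus $A_{R,h}$) work uniformly in $R$. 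Once these estimates are assembled, the conclusion that the double shadows cover $\bd\Gamma\times\bd\Gamma$ follows immediately, since every pair $(\xi,\eta)$ is realized by some such $g$.
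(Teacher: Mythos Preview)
The paper does not actually prove this proposition; it is quoted as \cite[Proposition 4.5]{Garncarek2014} and used as a black box. So there is no proof in the paper to compare against, but your proposal still contains a genuine error worth flagging.

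Your construction takes \(g=\gamma_{1}(R)\) on a rough geodesic ray to \(\xi\), using only \(\xi\). This correctly forces \(\hat g\) close to \(\xi\), but it gives you no control whatsoever over \(\check g=\widehat{g^{-1}}\). The claimed estimate \((g^{-1},\eta)\agtr R/2-(\xi,\eta)\) is simply false: in the free group on \(\{a,b\}\), take \(\xi=aaa\cdots\), \(\eta=bbb\cdots\), and \(g=a^{R}\). Then \(g^{-1}=a^{-R}\), so \((g^{-1},\eta)=0\), while your inequality would predict something of order \(R/2\). The identity you invoke, \((g^{-1},\eta)\approx(g,g\eta)\), is also incorrect; left-invariance gives \((g^{-1},\eta)_{1}=(1,g\eta)_{g}\), which is a different quantity.

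The point you are missing is that \(g\) must encode \emph{both} boundary points. The construction in \cite{Garncarek2014} is, roughly, to take \(a\) on a ray toward \(\xi\) and \(b\) on a ray toward \(\eta\), each at a suitable distance (about \(R/2\), adjusted by \((\xi,\eta)\)), and set \(g=ab^{-1}\). Then \(\hat g\) is close to \(\xi\) because the geodesic from \(1\) to \(g=ab^{-1}\) fellow-travels the ray to \(\xi\) through \(a\), while \(g^{-1}=ba^{-1}\) fellow-travels the ray to \(\eta\) through \(b\), forcing \(\check g\) close to \(\eta\). The bookkeeping of constants you anticipate is indeed the main work, but it has to be done for this two-sided construction, not the one-sided one you describe.
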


Henceforth, we will fix \(\rho\) and \(h\)
sufficiently large for this result to hold, and such that  the estimate
\begin{equation}
  \card{A_{R,h}}  \asymp \omega^{R}
\end{equation}
is satisfied. We will omit them from notation, by writing \(A_{R}\), and
\(\Sigma^{2}(g)\).

\subsection{Property RD}
\label{sec:property-rd}

Let \(G\) be a finitely generated group. Denote by \(\abs{x}\) the word-length of \(x\in G\) corresponding to a fixed generating set. For \(f\in\CC[G]\) denote
\begin{equation}
  r(f) = \max\{ \abs{x} : f(x)\ne 0\}.
\end{equation}
This quantity measures how far the support of \(f\) is from the unit element of \(G\). 

The multiplication in the group algebra \(\CC[G]\) extends to an action of \(\CC[G]\) on its completion \(\ell^{2}(G)\) by convolution. The standard definition of the property of rapid decay, abbreviated to `property RD' reads as follows. We say that \(G\) has \emph{property RD} if there exists a polynomial \(P\) with real coefficients, such that for any \(f\in\CC[G]\) and \(g\in \ell^{2}(G)\) we have
\begin{equation}
  \norm{f*g}_{2}\leq P(r(f)) \norm{f}_{2}\norm{g}_{2}.
\end{equation}
Clearly, this does not depend on the choice of the generating set of \(G\), and the polynomial \(P\) can be chosen to be of the form \(P(t)=C(1+t)^{n}\) for some \(C>0\).

Property RD can be reformulated in terms of matrix coefficients of unitary representations of \(G\). First, we say that a unitary representation \(\pi\colon G\to \mathcal{U}(\mathcal{H})\) is \emph{weakly contained in the regular representation}, if for all \(f\in \CC[G]\) we have
\begin{equation}
  \norm{\pi(f)}_{op} \leq \norm{f}_{op},
\end{equation}
where \(\pi\) is uniquely extended to \(\CC[G]\) by linearity, and on
the right we treat \(f\) as its corresponding convolution operator,
i.e. \(\lambda(f)\), where \(\lambda\) is the left regular
representation. Now, a finitely generated group \(G\) has property RD
if and only if there exist constants \(C,s>0\) such that for every
 representation \(\pi\colon G \to \mathcal{U}(\mathcal{H})\) weakly contained in the regular representation,
and for every \(\xi,\eta\in\mathcal{H}\) the estimate
  \begin{equation}\label{eq:decay-condition}
    \left( \sum_{x\in G} \frac{\abs{\langle \pi(x)\xi,\eta\rangle}^{2}}{(1+\abs{x})^{s}} \right)^{1/2} \leq C\norm{\xi}\norm{\eta}
  \end{equation}
is satisfied.

The crucial fact we will be relying on is the following strengthening of the standard property RD for hyperbolic groups, where thanks to restricting to the annulus, one can decrease the exponent (without this restriction, hyperbolic groups satisfy \eqref{eq:decay-condition} with exponent \(s=3\)).

\begin{proposition}
  \label{prop:annular-RD-matrix-coeffs}
  Let \(\Gamma\) be a hyperbolic group endowed with a metric \(d\in\mathcal{D}(\Gamma)\), and let \(\pi\) be a unitary representation of \(\Gamma\) on a Hilbert space \(\mathcal{H}\), weakly contained in the regular representation. Then for \(v,w \in \mathcal{H}\) the estimate
  \begin{equation}
    \left(\sum_{g\in A_{R,h}} \abs{\langle \pi(g)v,w \rangle}^{2} \right)^{1/2} \lasymp_{h} (1+R)\norm{v}\norm{w}
  \end{equation}
  holds uniformly in \(R\).
\end{proposition}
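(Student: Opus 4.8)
The plan is to deduce the annular estimate from the standard property RD for hyperbolic groups, exploiting the fact that restricting the support of $f$ to an annulus $A_{R,h}$ improves the polynomial degree. Recall that the standard property RD, reformulated via matrix coefficients as in \eqref{eq:decay-condition}, gives an exponent $s=3$ for hyperbolic groups. The key point is that property RD of a hyperbolic group is really a statement about the \emph{annular} pieces of a function: the known sharp form asserts that for $f\in\CC[\Gamma]$ supported on a sphere (or thin annulus) of radius $R$, the convolution operator norm obeys $\norm{\lambda(f)}_{op} \lasymp_{h} (1+R)\norm{f}_{2}$, i.e.\ the polynomial degree drops from $3/2$ to $1$ once we fix the radius. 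This is the precise input I would isolate and cite.

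First I would pass from the matrix-coefficient formulation to the convolution formulation. Since $\pi$ is weakly contained in the regular representation, we have $\norm{\pi(f)}_{op}\leq \norm{\lambda(f)}_{op}$ for every $f\in\CC[\Gamma]$. Fix $v,w\in\mathcal{H}$ and consider the finitely supported function
\begin{equation*}
  f(g) = \overline{\langle \pi(g)v,w\rangle}\,\one_{A_{R,h}}(g),
\end{equation*}
so that $\norm{f}_{2}^{2} = \sum_{g\in A_{R,h}} \abs{\langle \pi(g)v,w\rangle}^{2}$ is exactly the square of the quantity I want to bound. The trick is to feed this $f$ back into the representation: computing $\langle \pi(f)v,w\rangle = \sum_{g} f(g)\langle \pi(g)v,w\rangle = \norm{f}_{2}^{2}$, and then estimating
\begin{equation*}
  \norm{f}_{2}^{2} = \langle \pi(f)v,w\rangle \leq \norm{\pi(f)}_{op}\norm{v}\norm{w} \leq \norm{\lambda(f)}_{op}\norm{v}\norm{w}.
\end{equation*}

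It now remains to bound $\norm{\lambda(f)}_{op}$ by $\lasymp_{h} (1+R)\norm{f}_{2}$, using that $f$ is supported in the annulus $A_{R,h}$ of fixed thickness $2h$. Combining this with the display above yields $\norm{f}_{2}^{2} \lasymp_{h} (1+R)\norm{f}_{2}\norm{v}\norm{w}$, and dividing through by $\norm{f}_{2}$ gives $\norm{f}_{2}\lasymp_{h}(1+R)\norm{v}\norm{w}$, which is precisely the claimed estimate. I expect the main obstacle to be the annular convolution bound itself, namely the improvement of the exponent from the crude $(1+R)^{3/2}$ (which one gets by decomposing a radius-$R$ function into $O(R)$ spherical shells and applying the general RD inequality shell-by-shell with Cauchy--Schwarz) down to the sharp $(1+R)^{1}$. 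This sharpening is exactly the ``slightly stronger variant of property RD satisfied by hyperbolic groups'' advertised in the introduction; the natural route is to invoke the refined form of property RD for hyperbolic groups due to Chatterji and collaborators \cite{Chatterji2016}, in which the polynomial controlling a single thin annulus is linear in its radius, and to absorb the dependence on the fixed thickness $h$ into the implicit constant.
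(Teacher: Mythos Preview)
Your proposal is correct and follows exactly the same reduction as the paper: define $\phi(g)=\overline{\langle\pi(g)v,w\rangle}\,\one_{A_{R,h}}(g)$, compute $\norm{\phi}_2^2=\langle\pi(\phi)v,w\rangle$, and bound via weak containment together with the annular convolution estimate $\norm{\lambda(\phi)}_{op}\lasymp_h(1+R)\norm{\phi}_2$. The only difference is that the paper does not cite this last estimate but proves it in situ as Proposition~\ref{prop:annular-RD-hyp} (through a chain of combinatorial lemmas adapting Jolissaint's argument to roughly geodesic metrics), precisely because the available references cover word metrics or manifold actions rather than the full generality of $d\in\mathcal{D}(\Gamma)$; your invocation of \cite{Chatterji2016} would need to be checked for this level of generality.
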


In a less general setting of a group acting on a negatively curved manifold, this result appears in \cite[Section 3.2]{Jolissaint1990}, and the proof presented therein requires only some technical adjustments to work for a general hyperbolic group \(\Gamma\) equipped with a metric \(d\in\mathcal{D}(\Gamma)\). We will include the adapted proof for the sake of self-containment. Note that a proof of the fact that hyperbolic groups satisfy property RD can be fund in \cite{delaHarpe1988}.

Before we proceed to the proof, note that since by \cite{Adams1994} the action of \(\Gamma\) on \(\bd \Gamma\) is amenable, and by \cite{Kuhn1994}, ergodic amenable actions lead to quasi-regular representations which are weakly contained in the regular representation, Proposition \ref{prop:annular-RD-matrix-coeffs} applies to the matrix coefficients of boundary representations.

We begin the proof of Proposition \ref{prop:annular-RD-matrix-coeffs} with several technical lemmas.

\begin{lemma} \label{prop:annular-rd-lem1}
  Let \(g\in \Gamma\). Then
  \begin{equation}
    \diam \{ x\in A_{R,h} : x^{-1}g \in A_{R',h'}\} \aless_{h,h'} \max\{ 0, R+R'-\abs{g} \}.
  \end{equation}
  In particular, for fixed \(h\) and \(h'\) the cardinality of this set is bounded uniformly in \(R+R'-\abs{g}\).
\end{lemma}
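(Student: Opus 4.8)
The plan is to translate the two defining conditions of the set into estimates on Gromov products and then use hyperbolicity once. Write $S = \{x\in A_{R,h} : x^{-1}g\in A_{R',h'}\}$. By left-invariance of $d$ we have $\abs{x^{-1}g} = d(1,x^{-1}g) = d(x,g)$, so membership in $S$ says exactly that $\abs{x}\approx_{h} R$ and $d(x,g)\approx_{h'} R'$. If $S$ is empty or a singleton the inequality is trivial, so I assume it contains at least two points.

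The key computation is as follows. For any $x\in S$, the definition of the Gromov product gives
\[
  (x,g) = \tfrac12\bigl(\abs{x}+\abs{g}-d(x,g)\bigr) \approx_{h,h'} \tfrac12\bigl(R+\abs{g}-R'\bigr).
\]
Taking $x,y\in S$, hyperbolicity with basepoint $1$ and middle point $z=g$ yields
\[
  (x,y) \agtr \min\{(x,g),(g,y)\} \agtr_{h,h'} \tfrac12\bigl(R+\abs{g}-R'\bigr),
\]
since both $(x,g)$ and $(y,g)$ satisfy the same estimate. On the other hand, expanding the left-hand side and using $\abs{x},\abs{y}\approx_{h} R$,
\[
  (x,y) = \tfrac12\bigl(\abs{x}+\abs{y}-d(x,y)\bigr) \approx_{h} R - \tfrac12 d(x,y).
\]
Comparing the two displays and rearranging gives $d(x,y)\aless_{h,h'} R+R'-\abs{g}$, and taking the supremum over $x,y\in S$ gives $\diam S \aless_{h,h'} R+R'-\abs{g}$.

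To pass to $\max\{0,R+R'-\abs{g}\}$, I note that $\diam S\geq 0$ always, and that $S$ can only be nonempty when $R+R'-\abs{g}\agtr 0$: indeed, for $x\in S$ the triangle inequality gives $\abs{g}\leq\abs{x}+d(x,g)\leq R+h+R'+h'$. Hence on the relevant range the quantities $R+R'-\abs{g}$ and $\max\{0,R+R'-\abs{g}\}$ differ by at most $h+h'$, and the estimate upgrades to $\diam S \aless_{h,h'}\max\{0,R+R'-\abs{g}\}$. For the final assertion, fix the value of $R+R'-\abs{g}$; then $\diam S$ is bounded by a constant depending only on this value and on $h,h'$. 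Since $d$ is quasi-isometric to a word metric $d_{S}$ on the finitely generated group $\Gamma$, a set of bounded $d$-diameter has bounded $d_{S}$-diameter, hence lies in a $d_{S}$-ball of bounded radius, which contains a number of elements bounded solely in terms of that radius. Therefore $\card{S}$ is bounded in terms of $R+R'-\abs{g}$ alone, uniformly in $R,R',g$.

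The geometric content is entirely concentrated in the single application of the hyperbolic inequality with $z=g$; the only mildly delicate point is the bookkeeping that replaces $R+R'-\abs{g}$ by its positive part, which rests on the nonemptiness constraint $\abs{g}\leq R+R'+h+h'$.
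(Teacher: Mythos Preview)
Your argument is correct and follows essentially the same route as the paper: compute \((x,g)\approx_{h,h'}\tfrac12(R+\abs{g}-R')\), apply hyperbolicity with middle point \(g\) to bound \((x,y)\) from below, and convert this into the diameter estimate. Your treatment is in fact more careful than the paper's, which leaves the passage to \(\max\{0,R+R'-\abs{g}\}\) and the cardinality conclusion implicit.
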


\begin{proof}
  Assume that the considered set is non-empty, and contains two elements \(x_{1},x_{2}\). We have
  \begin{equation}
    (x_{1},x_{2}) \agtr \min\{ (x_{1},g), (x_{2},g)\} \approx_{h,h'} \frac{1}{2}(R + \abs{g} - R'),
  \end{equation}
  and therefore
  \begin{equation}
    d(x_{1},x_{2}) = \abs{x_{1}} + \abs{x_{2}} - 2(x_{1},x_{2}) \approx_{h,h'} R+R'-\abs{g}. \qedhere
  \end{equation} 
\end{proof}

\begin{lemma}\label{prop:annular-rd-lem2}
  Suppose that \(\phi,\psi\in \CC[\Gamma]\) are supported in \(A_{R,h}\) and \(A_{R',h'}\), respectively. Then for \(g\in \Gamma\) we have
  \begin{equation}
    \abs{\phi*\psi(g)}^{2}\lasymp_{h,h',R+R'-\abs{g}} \abs{\phi}^{2}*\abs{\psi}^{2}(g).
  \end{equation}
\end{lemma}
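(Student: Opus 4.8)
The plan is to expand the convolutions and exploit the key structural fact supplied by Lemma \ref{prop:annular-rd-lem1}: the number of ways to factor a fixed $g$ as a product of an element in $A_{R,h}$ and an element in $A_{R',h'}$ is bounded by a constant depending only on $h$, $h'$, and $R+R'-\abs{g}$. Write $\phi*\psi(g) = \sum_{x} \phi(x)\psi(x^{-1}g)$. The summand $\phi(x)\psi(x^{-1}g)$ can only be nonzero when $x\in A_{R,h}$ (by the support of $\phi$) and $x^{-1}g\in A_{R',h'}$ (by the support of $\psi$), so the sum is effectively indexed by the set $S_g = \{x\in A_{R,h} : x^{-1}g\in A_{R',h'}\}$ appearing in the previous lemma.

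The central step is the Cauchy–Schwarz inequality applied to this restricted sum. Writing
\begin{equation*}
  \abs{\phi*\psi(g)}^{2} = \Big\lvert \sum_{x\in S_g} \phi(x)\psi(x^{-1}g)\Big\rvert^{2} \leq \card{S_g} \sum_{x\in S_g} \abs{\phi(x)}^{2}\abs{\psi(x^{-1}g)}^{2},
\end{equation*}
I would then observe that the inner sum is exactly $\abs{\phi}^{2}*\abs{\psi}^{2}(g)$, since the support conditions force the summation over $S_g$ to coincide with the full convolution of the squared moduli. This gives
\begin{equation*}
  \abs{\phi*\psi(g)}^{2} \leq \card{S_g}\cdot \abs{\phi}^{2}*\abs{\psi}^{2}(g).
\end{equation*}

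Finally I would invoke Lemma \ref{prop:annular-rd-lem1}, whose concluding sentence states precisely that $\card{S_g}$ is bounded uniformly in terms of $h$, $h'$, and $R+R'-\abs{g}$. Substituting this bound for the combinatorial factor yields the multiplicative estimate $\abs{\phi*\psi(g)}^{2}\lasymp_{h,h',R+R'-\abs{g}} \abs{\phi}^{2}*\abs{\psi}^{2}(g)$, which is the claim. I do not anticipate a genuine obstacle here: the lemma is essentially a packaging of Cauchy–Schwarz with the geometric counting bound already established, and the only point requiring care is checking that the Cauchy–Schwarz constant $\card{S_g}$ is matched against the right dependence in the $\lasymp$ notation — namely that the suppressed constant is permitted to depend on $R+R'-\abs{g}$, which is exactly the parameter controlled by the preceding lemma.
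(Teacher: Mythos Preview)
Your proposal is correct and is essentially identical to the paper's proof: both restrict the convolution sum to the set \(S_g=\{x\in A_{R,h}:x^{-1}g\in A_{R',h'}\}\), apply Cauchy--Schwarz to pick up the factor \(\card{S_g}\), identify the remaining sum as \(\abs{\phi}^2*\abs{\psi}^2(g)\), and bound \(\card{S_g}\) via Lemma~\ref{prop:annular-rd-lem1}. The paper writes the Cauchy--Schwarz step more tersely, but the argument is the same.
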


\begin{proof}\ 
  We have
  \begin{equation}
    \phi*\psi(g) = \sum_{x} \phi(x)\psi(x^{-1}g),
  \end{equation}
  where \(x\) runs through the set \(\{ x\in A_{R,h} : x^{-1}g\in A_{R',h'}\}\), whose cardinality is, by Lemma~\ref{prop:annular-rd-lem1}, bounded by some \(C\) depending only on \(h, h'\), and \(R+R'-\abs{g}\). Hence,
  \begin{equation}
    \abs{\phi*\psi(g)}^{2} \leq C \sum_{x} \abs{\phi(x)}^{2}\lvert{\psi(x^{-1}g)}\rvert^{2} = C \abs{\phi}^{2}*\abs{\psi}^{2}(g).\qedhere
  \end{equation}
\end{proof}

\begin{lemma} \label{prop:element-decomposition}
  Let \(g\in A_{R,h}A_{R',h'}=\{ kk' : k\in A_{R,h}, k'\in A_{R',h'}\}\) for some \(R,R',h,h'\geq 0\), and put \(p=(R+R'-\abs{g})/2\). Then there exist \(u_{g},v_{g}\in \Gamma\) such that
  \begin{enumerate}
  \item \(u_{g}v_{g}=g\),\
  \item \(\lvert{u_{g}}\rvert\approx_{h,h'} R-p\), and \(\lvert{v_{g}}\rvert\approx_{h,h'} R'-p\),
  \item for any decomposition \(g=uv\) with \(u\in A_{R,h}\), and \(v\in A_{R',h'}\), we have \(\lvert{u_{g}^{-1}u}\rvert=\lvert{vv_{g}^{-1}}\rvert\approx_{h,h'}p\).
  \end{enumerate}
\end{lemma}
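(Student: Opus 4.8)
The plan is to construct $u_g$ as an approximate orthogonal projection of a decomposition point onto a roughly geodesic segment joining $1$ to $g$. Concretely, I would fix a roughly geodesic segment $\gamma\colon[0,\ell]\to\Gamma$ with $\gamma(0)=1$, $\gamma(\ell)=g$ and $\ell\approx\abs{g}$, denote its image by $[1,g]$, and set $u_g=\gamma(R-p)$ and $v_g=u_g^{-1}g$. The motivation is the tree picture: for any admissible decomposition $g=uv$ the Gromov product $(1,g)_u=\tfrac12(\abs{u}+d(u,g)-\abs{g})\approx_{h,h'}\tfrac12(R+R'-\abs{g})=p$ measures the distance from $u$ to $[1,g]$, while the foot of the perpendicular sits at distance $(u,g)_1=\tfrac12(\abs{u}+\abs{g}-d(u,g))\approx_{h,h'}R-p$ from $1$. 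Thus every such $u$ should lie within $\approx p$ of the single geodesic point $\gamma(R-p)$, which is what (3) asserts.

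Before anything else I would check that $\gamma(R-p)$ makes sense, i.e.\ that $0\aless_{h,h'}R-p\aless_{h,h'}\ell$. Since $A_{R,h}A_{R',h'}\ni g$ is nonempty, picking one decomposition $g=uv$ gives $\abs{g}\le\abs{u}+\abs{v}\approx_{h,h'}R+R'$, so $p\agtr_{h,h'}0$, and $\abs{g}\agtr_{h,h'}\bigl\lvert\abs{u}-\abs{v}\bigr\rvert\approx_{h,h'}\abs{R-R'}$, so both $R-p=\tfrac12(R-R'+\abs{g})$ and $R'-p=\tfrac12(R'-R+\abs{g})$ are $\agtr_{h,h'}0$. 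Condition (1) is then immediate from $v_g=u_g^{-1}g$, and (2) follows from the rough-geodesic estimate~\eqref{eq:rough-geo-uniform-est}: $\abs{u_g}=d(1,u_g)\approx R-p$ and $\abs{v_g}=d(u_g,g)\approx\ell-(R-p)\approx R'-p$.

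The heart of the argument is (3). The claimed equality $\abs{u_g^{-1}u}=\abs{vv_g^{-1}}$ is automatic: from $u_gv_g=g=uv$ we get $u_g^{-1}u=v_gv^{-1}$, and $\abs{x}=\abs{x^{-1}}$ by left-invariance, so both sides equal $d(u,u_g)$, and it remains to prove $d(u,u_g)\approx_{h,h'}p$. For the lower bound I would use the elementary fact (a triangle-inequality computation) that for $w\in[1,g]$ one has $d(u,w)\agtr(1,g)_u$; applied to $w=u_g$ this gives $d(u,u_g)\agtr(1,g)_u\approx_{h,h'}p$. For the upper bound I would exploit that $u_g\in[1,g]$ forces $(1,g)_{u_g}\approx0$, and then apply the four-point hyperbolic inequality
\begin{equation*}
  (1,g)_{u_g}\agtr\min\{(1,u)_{u_g},(u,g)_{u_g}\}.
\end{equation*}
Using $\abs{u_g}\approx R-p$, $\abs{u}\approx R$, $d(u_g,g)\approx R'-p$ and $d(u,g)\approx R'$ one computes $(1,u)_{u_g}\approx_{h,h'}\tfrac12(d(u,u_g)-p)$ and $(u,g)_{u_g}\approx_{h,h'}\tfrac12(d(u,u_g)-p)$, whence $d(u,u_g)-p\aless_{h,h'}0$. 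Combining the two bounds yields $d(u,u_g)\approx_{h,h'}p$.

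The main obstacle is not conceptual but bookkeeping: each estimate carries an additive error depending on $h$, $h'$ and the fixed rough-geodesic constant, and I must ensure these accumulate into a single constant depending only on $h,h'$ (and on $\Gamma,d$), uniformly in $R,R'$ and $g$. I would therefore isolate the two Gromov-product computations and the ``distance to a geodesic'' lower bound as clean intermediate estimates with explicit constants, so that the final $\approx_{h,h'}$ in (2) and (3) is legitimate; everything else is routine manipulation of the Gromov product.
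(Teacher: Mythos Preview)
Your proof is correct, and the construction of \(u_g\) as the point \(\gamma(R-p)\) on a rough geodesic from \(1\) to \(g\) is exactly what the paper does. The verification of (1) and (2) is identical.

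For (3) the paper takes a slightly shorter route. Rather than separating into an upper and a lower bound with different basepoints, it works entirely with the Gromov product at the basepoint \(1\): from the upper bound \((u_g,u)\le\abs{u_g}\approx_{h,h'}R-p\) and the hyperbolic inequality \((u_g,u)\agtr\min\{(u_g,g),(g,u)\}\), together with the computations \((u_g,g)\approx\abs{u_g}\approx_{h,h'}R-p\) and \((g,u)=\tfrac12(\abs{g}+\abs{u}-\abs{v})\approx_{h,h'}R-p\), it concludes \((u_g,u)\approx_{h,h'}R-p\) in one stroke and then reads off \(d(u_g,u)=\abs{u_g}+\abs{u}-2(u_g,u)\approx_{h,h'}p\). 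Your argument instead uses the elementary inequality \(d(u,w)\ge(1,g)_u\) for \(w\) on \([1,g]\) (which is indeed an exact triangle-inequality fact, up to the rough-geodesic constant) for the lower bound, and moves the basepoint to \(u_g\) for the upper bound. Both approaches are standard hyperbolic-geometry manipulations of comparable difficulty; the paper's is marginally more economical, while yours makes the ``distance to the geodesic'' picture more explicit.
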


\begin{proof}
  By the triangle inequality we have \(\abs{g}\agtr_{h,h'} R'-R\), so \(R-p \agtr_{h,h'} 0\). We may thus pick \(u_{g}\) satisfying \(\abs{u_{g}}\approx_{h,h'} R-p\) on some roughly geodesic segment from \(1\) to \(g\), and define \(v_{g} = u_{g}^{-1}g\). Then
  \begin{equation}
    \abs{v_{g}} \approx \abs{g}-\abs{u_{g}} \approx_{h,h'} R'-p.
  \end{equation}
  To verify condition (3), choose a decomposition \(g=uv\) with  \(u\in A_{R,h}\), and \(v\in A_{R',h'}\), and observe that \(u_{g}^{-1}u = (vv_{g}^{-1})^{-1}\), so we have to estimate only \(\lvert{u_{g}^{-1}u}\rvert\). We have
  \begin{equation}
    \abs{u_{g}}\geq (u_{g},u) \agtr \min\{ (u_{g},g), (g,u)\},
  \end{equation}
  where
  \begin{equation}
    (u_{g},g)\approx \abs{u_{g}}\approx_{h,h'} R-p,
  \end{equation}
  and
  \begin{equation}
    (g,u) = \frac{1}{2}(\abs{g}+\abs{u}-d(g,u)) \approx_{h,h'} R-p,
  \end{equation}
  so 
  \begin{equation}
    (u_{g},u) \approx_{h,h'} R-p.
  \end{equation}
  It follows that
  \begin{equation}
    \lvert u_{g}^{-1}u \rvert = \lvert u_{g}\rvert + \abs{u} - 2(u_{g},u) \approx_{h,h'} p. \qedhere
  \end{equation}
\end{proof}

\begin{lemma} \label{prop:convolution-estimate-annuli}
  Suppose that \(\phi,\psi\in \CC[\Gamma]\) are supported in \(A_{R,h}\) and \(A_{R',h'}\), respectively. Then, the  restriction of their convolution to the annulus \(A_{R'',h''}\) satisfies the estimate
  \begin{equation}
    \norm{ \phi*\psi \res_{A_{R'',h''}}}_{2} \lasymp_{h,h',h''} \norm{\phi}_{2}\norm{\psi}_{2}.
  \end{equation}
\end{lemma}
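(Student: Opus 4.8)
The plan is to exploit the canonical ``midpoint'' factorization of Lemma~\ref{prop:element-decomposition} to split the convolution into two pieces on which a Cauchy--Schwarz estimate loses nothing once we sum over the target annulus. Before describing it, let me record why the direct approach fails: combining Lemma~\ref{prop:annular-rd-lem2} with the identity \(\sum_{g}\abs{\phi}^{2}*\abs{\psi}^{2}(g)=\norm{\phi}_{2}^{2}\norm{\psi}_{2}^{2}\) would only give \(\norm{\phi*\psi\res_{A_{R'',h''}}}_{2}^{2}\lasymp_{p}\norm{\phi}_{2}^{2}\norm{\psi}_{2}^{2}\), where \(p=(R+R'-R'')/2\) measures the cancellation; the implied constant there grows like the cardinality of a ball of radius \(\approx 2p\), i.e.\ like \(\omega^{2p}\), and is useless. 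Removing this factor is the whole point, and the annular restriction is exactly what makes it possible.

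First I would set \(p=(R+R'-R'')/2\) and discard the trivial cases in which \(A_{R'',h''}\) misses \(A_{R,h}A_{R',h'}\). For each \(g\in A_{R'',h''}\) in the support of \(\phi*\psi\) one has \(\abs{g}\approx_{h''}R''\), hence \((R+R'-\abs{g})/2\approx_{h''}p\), and Lemma~\ref{prop:element-decomposition} supplies a factorization \(g=u_{g}v_{g}\) with \(\abs{u_{g}}\approx_{h,h',h''}R-p\) and \(\abs{v_{g}}\approx_{h,h',h''}R'-p\). By part~(3) of that lemma every admissible decomposition \(g=uv\), \(u\in A_{R,h}\), \(v\in A_{R',h'}\), has the form \(u=u_{g}w\), \(v=w^{-1}v_{g}\) with \(\abs{w}=\abs{u_{g}^{-1}u}\approx_{h,h',h''}p\). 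Choosing a thickness \(k=k(h,h',h'')\) large enough, I can therefore rewrite
\begin{equation*}
  \phi*\psi(g)=\sum_{w\in A_{p,k}}\phi(u_{g}w)\,\psi(w^{-1}v_{g}),
\end{equation*}
the sum now running over the whole annulus \(A_{p,k}\) because \(\phi,\psi\) vanish off their supports.

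The central step is a Cauchy--Schwarz in \(w\), yielding \(\abs{\phi*\psi(g)}^{2}\leq F(u_{g})\,G(v_{g})\) with \(F(a)=\sum_{w\in A_{p,k}}\abs{\phi(aw)}^{2}\) and \(G(b)=\sum_{w\in A_{p,k}}\abs{\psi(w^{-1}b)}^{2}\). The key observation is that \(g\mapsto(u_{g},v_{g})\) is injective, since \(u_{g}v_{g}=g\); thus summing over \(g\in A_{R'',h''}\) and enlarging to all pairs \((a,b)\in A_{R-p,k}\times A_{R'-p,k}\) decouples the two factors,
\begin{equation*}
  \sum_{g\in A_{R'',h''}}\abs{\phi*\psi(g)}^{2}\leq\Big(\sum_{a}F(a)\Big)\Big(\sum_{b}G(b)\Big).
\end{equation*}
Finally I would reindex \(\sum_{a}F(a)=\sum_{a,w}\abs{\phi(aw)}^{2}\) by \(x=aw\): for fixed \(x\in A_{R,h}\) the number of pairs with \(aw=x\) equals \(\card{\set{a\in A_{R-p,k}:a^{-1}x\in A_{p,k}}}\), which by Lemma~\ref{prop:annular-rd-lem1} is controlled by \(\max\{0,(R-p)+p-\abs{x}\}=\max\{0,R-\abs{x}\}\), a quantity bounded by a constant depending only on \(h\) because \(x\in A_{R,h}\). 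Hence \(\sum_{a}F(a)\lasymp_{h,h',h''}\norm{\phi}_{2}^{2}\) and likewise \(\sum_{b}G(b)\lasymp_{h,h',h''}\norm{\psi}_{2}^{2}\), giving the claim after taking square roots.

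The main obstacle is conceptual rather than computational: one has to see that the cancellation parameter \(p\), which wrecks the naive bound, can be absorbed by routing every admissible decomposition through the single midpoint \(u_{g}\), so that the only multiplicity surviving the final reindexing is the bounded number of midpoints on roughly geodesic segments---precisely the gain \(R-\abs{x}\aless_{h}0\) furnished by the support condition on \(\phi\). The remaining points to check are genuinely routine: that all the \(\approx\) estimates hold with constants depending only on the thicknesses, so that a single \(k\) works uniformly in \(R,R',R''\), and that extending the \(w\)-sum to the full annulus neither omits nor adds any admissible factorization.
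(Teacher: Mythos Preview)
Your proof is correct and follows essentially the same route as the paper: both use the midpoint factorization \(g=u_{g}v_{g}\) of Lemma~\ref{prop:element-decomposition}, apply Cauchy--Schwarz over \(w\in A_{p,\kappa}\) to obtain \(\abs{\phi*\psi(g)}^{2}\leq\phi_{p}(u_{g})^{2}\psi_{p}(v_{g})^{2}\), and finish with the bounded-multiplicity count of Lemma~\ref{prop:annular-rd-lem1} to get \(\norm{\phi_{p}}_{2}\lasymp\norm{\phi}_{2}\). The one cosmetic difference is in the decoupling step: the paper bounds \(\phi_{p}(u_{g})\psi_{p}(v_{g})\leq(\phi_{p}*\psi_{p})(g)\) and invokes Lemma~\ref{prop:annular-rd-lem2}, whereas you use the injectivity of \(g\mapsto(u_{g},v_{g})\) and nonnegativity of \(F,G\) to split the double sum directly---your version is slightly more direct but equivalent in substance.
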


\begin{proof}
  Pick \(g\in A_{R'',h''}\) such that \(\phi*\psi(g)\ne 0\); in particular, \(g\in A_{R,h}A_{R',h'}\). Let \(p=(R+R'-R'')/2\), and let \(g=u_{g}v_{g}\) be the decomposition from Lemma~\ref{prop:element-decomposition}. Then there exists a constant \(\kappa\geq 0\) depending only on \(h,h'\) and \(h''\) such that
  \begin{equation}
    \abs{\phi*\psi(g)} \leq \sum_{uv=g}\abs{\phi(u)\psi(v)} \leq \sum_{k\in A_{p,\kappa}} \abs{\phi(u_{g}k)\psi(k^{-1}v_{g})} \leq \phi_{p}(u_{g})\psi_{p}(v_{g}),
  \end{equation}
  where
  \begin{equation}
    \phi_{p}(u) = \left(\sum_{k\in A_{p,\kappa}} \abs{\phi(uk)}^{2}\right)^{1/2}\quad\text{and}\quad \psi_{p}(v) = \left(\sum_{k\in A_{p,\kappa}} \abs{\psi(k^{-1}v)}^{2}\right)^{1/2}
  \end{equation}
  are defined on the annuli of the form \(A_{R-p,r}\) and \(A_{R'-p,r'}\) respectively, where \(r\) and \(r'\) depend only on \(h,h',h''\).
  Therefore, we have
  \begin{equation}
    \begin{split}
      \norm{ \phi*\psi \res_{A_{R'',h''}}}_{2}^{2} & = \sum_{g\in
        A_{R'',h''}} \abs{\phi*\psi(g)}^{2} \leq \sum_{g\in
        A_{R'',h''}}\phi_{p}(u_{g})^{2}\psi_{p}(v_{g})^{2} \leq \\
      & \leq \sum_{g\in
        A_{R'',h''}} (\phi_{p}*\psi_{p}(g))^{2}.
    \end{split}
  \end{equation}
  Since \(R-p+R'-p \approx_{h,h',h''} R''\), by Lemma~\ref{prop:annular-rd-lem2} we have
  \begin{equation}
    \sum_{g\in A_{R'',h''}} (\phi_{p}*\psi_{p}(g))^{2} \lasymp_{h,h'} \sum_{g\in A_{R'',h''}} \phi_{p}^{2}*\psi_{p}^{2}(g) = \norm{\phi_{p}}_{2}^{2} \norm{\psi_{p}}_{2}^{2}.
  \end{equation}
  But by Lemma \ref{prop:annular-rd-lem1}
  \begin{equation}
    \norm{\phi_{p}}_{2}^{2} = \sum_{u\in A_{R-p,r}} \sum_{h\in A_{p,\kappa}} \abs{\phi(uh)}^{2} \lasymp_{r,\kappa} \norm{\phi}_{2}^{2},
  \end{equation}
  and similarly with \(\psi_{p}\), thus finishing the proof.
\end{proof}

Using the lemmas above, we may prove the following estimate.

\begin{proposition}
  \label{prop:annular-RD-hyp}
  Let \(\Gamma\) be a hyperbolic group endowed with a metric \(d\in \mathcal{D}(\Gamma)\), and let \(R,h>0\). Then for \(\phi\in \CC[\Gamma]\) supported in \(A_{R,h}\) and \(\psi\in \ell^{2}(\Gamma)\) we have
  \begin{equation}
    \norm{\phi * \psi}_{2} \lasymp_{h} (1+R)\norm{\phi}_{2}\norm{\psi}_{2}
  \end{equation}
\end{proposition}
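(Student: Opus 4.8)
The plan is to reduce to the single-annulus convolution estimate of Lemma~\ref{prop:convolution-estimate-annuli} by slicing both the argument \(\psi\) and the output \(\phi*\psi\) into unit spherical layers, and then to organize the resulting interactions through a Schur-test argument. Concretely, for \(n\in\NN\) I set \(\psi_{n}=\psi\cdot\one_{\{\abs{g}\in[n,n+1)\}}\), so that \(\psi=\sum_{n}\psi_{n}\) with pairwise disjoint supports, each contained in an annulus of fixed thickness; dually I write \(\phi*\psi=\sum_{m}(\phi*\psi)\res_{L_{m}}\), where \(L_{m}=\{\abs{g}\in[m,m+1)\}\). Since \(\phi\) is finitely supported, \(\phi*\psi\in\ell^{2}(\Gamma)\) and these decompositions are unconditionally valid.

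First I would record, via Lemma~\ref{prop:convolution-estimate-annuli}, that for every pair \((m,n)\) one has \(\norm{(\phi*\psi_{n})\res_{L_{m}}}_{2}\lasymp_{h}\norm{\phi}_{2}\norm{\psi_{n}}_{2}\), with an implied constant uniform in \(m,n,R\) because all the thicknesses involved are fixed. The geometric heart of the argument is the accompanying support constraint: \((\phi*\psi_{n})\res_{L_{m}}\) can be nonzero only if there are \(u\) with \(\abs{u}\approx R\), \(g\) with \(\abs{g}\approx m\), and \(u^{-1}g\) with \(\abs{u^{-1}g}\approx n\); applying the triangle inequality to \(\abs{g}\), \(\abs{u}\) and \(\abs{u^{-1}g}\) then forces \(\abs{m-n}\aless R\) (and also \(m+n\agtr R\), which we will not need). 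From \(\abs{m-n}\aless R\) I conclude that for each fixed \(m\) the number of admissible \(n\) is \(\lasymp 1+R\), and symmetrically that for each fixed \(n\) the number of admissible \(m\) is \(\lasymp 1+R\).

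With these ingredients I set \(a_{n}=\norm{\psi_{n}}_{2}\) and \(b_{m}=\norm{(\phi*\psi)\res_{L_{m}}}_{2}\), and let \(K=(K_{mn})\) be the \(\{0,1\}\)-valued matrix of admissible pairs. The triangle inequality gives \(b_{m}\le\sum_{n}\norm{(\phi*\psi_{n})\res_{L_{m}}}_{2}\lasymp_{h}\norm{\phi}_{2}(Ka)_{m}\), whence
\begin{equation*}
  \norm{\phi*\psi}_{2}^{2}=\sum_{m}b_{m}^{2}\lasymp_{h}\norm{\phi}_{2}^{2}\,\norm{Ka}_{2}^{2}\le\norm{\phi}_{2}^{2}\,\norm{K}_{op}^{2}\,\norm{\psi}_{2}^{2},
\end{equation*}
using \(\sum_{n}a_{n}^{2}=\norm{\psi}_{2}^{2}\). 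Since both the row sums and the column sums of \(K\) are \(\lasymp 1+R\), the Schur test yields \(\norm{K}_{op}\lasymp 1+R\), and the claimed estimate follows.

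The step I expect to be the main obstacle is the bookkeeping behind the band structure: one must verify that the admissible set really has width \(\lasymp 1+R\) in both the \(m\) and the \(n\) direction, carefully tracking the additive constants produced by the rough-geodesic estimates and by the thickness parameters when the triangle inequality is invoked, and one must confirm that the constant furnished by Lemma~\ref{prop:convolution-estimate-annuli} has no hidden dependence on \(R\). Once the band structure is in place, the passage to the norm bound is a purely formal application of the Schur test.
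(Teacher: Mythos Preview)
Your proof is correct and is essentially the paper's own argument: the paper also slices \(\psi\) into unit shells \(\psi_{n}\), invokes Lemma~\ref{prop:convolution-estimate-annuli} on each pair, observes that only those \(n\) in a window \(I_{m}=[m-R-h-1,\,m+R+h]\) contribute, and then bounds \(\sum_{m}\bigl(\sum_{n\in I_{m}}\norm{\phi}_{2}\norm{\psi_{n}}_{2}\bigr)^{2}\) by Cauchy--Schwarz over the \(\lasymp 1+R\) terms in each row followed by counting how many rows contain a given \(n\). Your Schur-test packaging is exactly this computation (row and column sums \(\lasymp 1+R\) give \(\norm{K}_{op}\lasymp 1+R\)); the bookkeeping concern you raise is handled in the paper simply by the explicit interval \(I_{m}\), and the constant in Lemma~\ref{prop:convolution-estimate-annuli} indeed depends only on the fixed thicknesses.
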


\begin{proof}
  Without loss of generality we may assume that \(\phi\) and \(\psi\) take positive real values. If we denote by \(\chi_{n}\) the characteristic function of the annulus \(\{ g\in \Gamma: \abs{g}\in [n,n+1)\}\), and \(\psi_{n} = \psi\chi_{n}\), then for \(n\in I_{m} = [m-R-h-1,m+R+h] \) we have by Lemma \ref{prop:convolution-estimate-annuli}
  \begin{equation}
    \norm{(\phi*\psi_{n})\chi_{m}}_{2} \lasymp_{h}\norm{\phi}_{2}\norm{\psi_{n}}_{2},
  \end{equation}
  and for \(n\) outside this interval the corresponding norm is \(0\).
  Hence
  \begin{equation}
    \begin{split}
      \norm{(\phi*\psi)\chi_{m}}_{2} & \leq \sum_{n\in
        I_{m}}\norm{(\phi*\psi_{n})\chi_{m}} \lasymp_{h} \sum_{n\in
        I_{m}} \norm{\phi}_{2} \norm{\psi_{n}}_{2} \lasymp_{h} \\
      & \lasymp_{h}
      (1+R)^{1/2}\norm{\phi}_{2}\left( \sum_{n\in
          I_{m}}\norm{\psi_{n}}_{2}^{2} \right)^{1/2},
    \end{split}
  \end{equation}
  and therefore
  \begin{equation}
    \begin{split}
      \norm{\phi*\psi}_{2}^{2} & = \sum_{m}
      \norm{(\phi*\psi)\chi_{m}}_{2}^{2} \lasymp_{h}
      (1+R)\norm{\phi}_{2}\sum_{m}\sum_{n\in I_{m}}
      \norm{\psi_{n}}_{2}^{2} \lasymp_{h} \\
      & \lasymp_{h} (1+R)^{2}\norm{\phi}_{2} \sum_{n}\norm{\psi_{n}}_{2}^{2} = (1+R)^{2}\norm{\phi}_{2}^{2}\norm{\psi}_{2}^{2}. \qedhere
    \end{split}
  \end{equation}
\end{proof}

The proof of Proposition~\ref{prop:annular-RD-matrix-coeffs} now becomes a simple calculation.

\begin{proof}[Proof of Proposition~\ref{prop:annular-RD-matrix-coeffs}]
  Denote \(\phi(g)=\conj{\langle \pi(g)v,w\rangle}\) for \(g\in A_{R,h}\), and \(\phi(g)=0\) otherwise. The expression we are to estimate is then the \(\ell^{2}\)-norm of \(\phi\). We get, using Proposition~\ref{prop:annular-RD-hyp}
  \begin{equation}
    \begin{split}
      \norm{\phi}_{2}^{2} & = \sum_{g\in \Gamma} \phi(g)
      \langle\pi(g)v,w\rangle = \langle \pi(\phi) u,v\rangle \leq \\ & \leq
      \norm{\pi(\phi)}_{op}\norm{v}\norm{w} 
      \lasymp_{h}(1+R)\norm{\phi}_{2}\norm{v}\norm{w},
    \end{split}
  \end{equation}
  and consequently \(\norm{\phi}_{2}\lasymp_{h}(1+R)\norm{v}\norm{w}\).
\end{proof}

\section{Equidistribution and asymptotic Schur orthogonality}
\label{sec:equid-asympt-schurs}

Let \(\Gamma\) be a hyperbolic group endowed with a metric \(d\in\mathcal{D}(\Gamma)\), and the corresponding visual metric  \(d_{\epsilon}\) on the boundary \(\bd\Gamma\), where \(\epsilon>0\) is fixed. Denote by \(\mu_{PS}\) the associated Patterson-Sullivan measure, and let \(\mathcal{H}=L^{2}(\bd\Gamma,\mu_{PS})\).

In this section we first deal with the distribution of the points of the form \((\hat{g},\check{g})\) in \(\bd\Gamma\times\bd\Gamma\). We show that in a certain sense, they are approximately uniformly distributed. Namely, to the points \((\hat{g},\check{g})\) with \(g\in A_{R}\), we may assign weights in a controlled manner, in such a way that the weighted average of the corresponding Dirac masses gives a good approximation of \(\mu_{PS} \otimes\mu_{PS}\).

Using this equidistribution result, we prove that the matrix coefficients of a boundary representation of \(\Gamma\) satisfy certain \emph{asymptotic Schur orthogonality relations} resembling Schur  orthogonality relations.

\subsection{Equidistribution}
\label{sec:equidistribution}

Define the maps \(J\colon \Gamma\to\ov\Gamma\times\ov\Gamma\) and  \(J'\colon \Gamma\to\bd\Gamma\times\bd\Gamma\) by
\begin{equation}
  J(g)=(g,g^{-1})\qquad\text{and}\qquad J'(g)=(\hat{g},\check{g}).
\end{equation}
By Lemma~\ref{prop:retraction-onto-boundary} these maps are both continuous.

We begin by showing that two ways of approximating a measure of the form \(\mu\otimes\mu\) on \(\bd\Gamma\times\bd\Gamma\)  are equivalent.

\begin{lemma} \label{prop:asymp-uniform-equiv-conditions}
  Let \(\{\mu_{R} : R\geq 0\}\) be a one-parameter family of probability measures on \(\Gamma\), and let \(\mu\in\Prob(\bd\Gamma)\) be atom-free. Then the following conditions are equivalent
  \begin{enumerate}
  \item in the space \(C(\ov\Gamma\times\ov\Gamma)^{*}\) endowed with
    the weak-* topology we have
    \begin{equation}
      \lim_{R\to\infty} J_{*}\mu_{R} = \mu \otimes \mu.
    \end{equation}
  \item in the space \(C(\bd\Gamma\times\bd\Gamma)^{*}\) endowed with
    the weak-* topology we have
    \begin{equation}
      \lim_{R\to\infty} J'_{*}\mu_{R} = \mu \otimes \mu.
    \end{equation}
  \end{enumerate}
\end{lemma}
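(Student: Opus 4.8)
The plan rests on the single structural identity $J'=(p\times p)\circ J$, where $p\colon\ov\Gamma\to\bd\Gamma$ is the continuous retraction of Lemma~\ref{prop:retraction-onto-boundary}; this holds because $(p\times p)(g,g^{-1})=(\hat g,\check g)$. Since $\mu\otimes\mu$ is carried by $\bd\Gamma\times\bd\Gamma$, where $p\times p$ restricts to the identity, one has $(p\times p)_*(\mu\otimes\mu)=\mu\otimes\mu$. I would first dispatch the easy implication (1)$\Rightarrow$(2): for $\phi\in C(\bd\Gamma\times\bd\Gamma)$ the function $\phi\circ(p\times p)$ lies in $C(\ov\Gamma\times\ov\Gamma)$, so $\int\phi\,dJ'_*\mu_R=\int\phi\circ(p\times p)\,dJ_*\mu_R\to\int\phi\circ(p\times p)\,d(\mu\otimes\mu)=\int\phi\,d(\mu\otimes\mu)$, using (1) together with the fact that $p\times p=\id$ on the support of $\mu\otimes\mu$.

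The content is in (2)$\Rightarrow$(1). Fix $f\in C(\ov\Gamma\times\ov\Gamma)$; since $f|_{\bd\Gamma\times\bd\Gamma}\in C(\bd\Gamma\times\bd\Gamma)$ and both $J'_*\mu_R$ and $\mu\otimes\mu$ live on the boundary, (2) already gives $\int f\,dJ'_*\mu_R\to\int f\,d(\mu\otimes\mu)$. It therefore suffices to show that $\int f\,dJ_*\mu_R-\int f\,dJ'_*\mu_R=\sum_g\mu_R(g)\bigl[f(g,g^{-1})-f(\hat g,\check g)\bigr]$ tends to $0$. Here $\ov\Gamma\times\ov\Gamma$ is compact metrizable, so $f$ is uniformly continuous, and I would control the summands through two geometric inputs. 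First, the points $(g,g^{-1})$ and $(\hat g,\check g)$ become uniformly close as $\abs g\to\infty$: if $\abs{g_n}\to\infty$, then by compactness a subsequence converges to some $\xi\in\bd\Gamma$ (the limit lies in $\bd\Gamma$ precisely because $\abs{g_n}\to\infty$), and continuity of $p$ forces $\hat g_n=p(g_n)\to\xi$ as well, so $d_{\ov\Gamma}(g_n,\hat g_n)\to0$; the same applies to $g^{-1}$ and $\check g$ because $\abs{g^{-1}}=\abs g$. Second, the mass of $\mu_R$ must escape every bounded set: $\mu_R(\set{\abs g\le N})\to0$ for each $N$.

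To establish this escape of mass — which I expect to be the main obstacle, and the only place atom-freeness of $\mu$ is genuinely used — I would argue as follows. For fixed $N$ the image $F=J'(\set{\abs g\le N})$ is a finite, hence closed, subset of $\bd\Gamma\times\bd\Gamma$, and atom-freeness of $\mu$ gives $(\mu\otimes\mu)(F)=0$. The Portmanteau inequality for closed sets, applied to the weakly convergent probability measures $J'_*\mu_R\to\mu\otimes\mu$ furnished by (2), yields $\limsup_R J'_*\mu_R(F)\le(\mu\otimes\mu)(F)=0$, and since $\set{\abs g\le N}\subseteq J'^{-1}(F)$ we obtain $\mu_R(\set{\abs g\le N})\le J'_*\mu_R(F)\to0$. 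With both inputs in hand, given $\varepsilon>0$ I choose $\delta$ from the uniform continuity of $f$ and then $N$ so large that $\abs{f(g,g^{-1})-f(\hat g,\check g)}<\varepsilon$ whenever $\abs g>N$; splitting the sum at $\abs g=N$ bounds it by $2\norm f_\infty\,\mu_R(\set{\abs g\le N})+\varepsilon$, whose $\limsup$ over $R$ is at most $\varepsilon$. Letting $\varepsilon\to0$ closes the argument, and the two implications together give the equivalence.
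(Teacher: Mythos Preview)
Your proof is correct and follows essentially the same route as the paper: both directions use the identity $J'=(p\times p)\circ J$, and for (2)$\Rightarrow$(1) both split the difference $\int f\,dJ_*\mu_R-\int f\,dJ'_*\mu_R$ according to whether $\abs{g}$ is small or large, using atom-freeness for escape of mass on the small part and closeness of $(g,g^{-1})$ to $(\hat g,\check g)$ on the large part. The only cosmetic differences are that the paper constructs an explicit continuous bump function where you invoke Portmanteau for closed sets, and the paper phrases the large-$\abs{g}$ estimate as an abstract open-neighborhood argument where you use a metric subsequence argument; these are equivalent formulations of the same ideas.
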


\begin{proof} 
  First, assume (1), and let \(F\in C(\bd\Gamma\times \bd\Gamma)\). Define \(\tilde{F}\in C(\ov\Gamma\times\ov\Gamma)\) using the retraction \(p\colon \ov\Gamma\to\bd\Gamma\) from Lemma~\ref{prop:retraction-onto-boundary}, as \(\tilde{F}(x,y)=F(p(x),p(y))\). By (1), and the fact that \(p\) is identity on \(\bd\Gamma\), we have
  \begin{equation}
    \begin{split}
      \lim_{R\to\infty} \int F\,dJ'_{*}\mu_{R} & =
      \lim_{R\to\infty} \int \tilde{F}\,dJ_{*}\mu_{R} =  \int
      F(\xi,\eta)\,d\mu(\xi)d\mu(\eta).
    \end{split}
  \end{equation}

  Let us now prove the other implication. Assume (2), and take \(F\in C(\ov\Gamma\times\ov\Gamma)\). Define \(\tilde{F}\) in the same manner as before. We then have
  \begin{equation}
    \abs{\int F\,dJ_{*}\mu_{R} - \int F\,dJ'_{*}\mu_{R}} \leq \int \lvert{ F - \tilde{F}}\rvert \,dJ_{*}\mu_{R},
  \end{equation}
  but \(G=\lvert F-\tilde{F}\rvert\) vanishes on \(\bd\Gamma\times\bd\Gamma\), and we may estimate the above integral separately on some large ball \(B=B_{\Gamma}(1,r)\subseteq \Gamma\), and its complement, by writing
  \begin{equation} \label{eq:proof-auec-1}
      \int G\,dJ_{*}\mu_{R}  = \int_{B} G\circ J\,d\mu_{R} + \int_{\Gamma\setminus B} G\circ J\,d\mu_{R}. 
  \end{equation}
  The first term can be bounded by
  \begin{equation}
    \int_{B} G\circ J\,d\mu_{R} \leq \norm{G}_{\infty} \mu_{R}(B).
  \end{equation}
  To see that \(\lim_{R\to\infty}\mu_{R}(B)=0\), observe that since \(\mu\) is atom-free, and \(\ov\Gamma\) is a compact Hausdorff space, for any \(\lambda>0\) there exists a continuous function \(H\colon \ov\Gamma\times\ov\Gamma \to [0,1]\) such that \(H(\hat{g},\check{g}) = 1\) for \(g\in B\), and
  \begin{equation}
    \int H\,d\mu^{2} < \lambda.
  \end{equation}
  Then
  \begin{equation}
    \mu_{R}(B) = \int_{B}H(\hat{g},\check{g})\,d\mu_{R}(g) \leq\int H\,dJ'_{*}\mu_{R},
  \end{equation}
  and therefore
  \begin{equation}
    \lim_{R\to\infty} \mu_{R}(B) \leq \lim_{R\to\infty} \int H\, dJ'_{*}\mu_{R}(g) = \int H\,d\mu^{2} < \lambda
  \end{equation}
  for any ball \(B\subseteq \Gamma\).

  To bound the second term in \eqref{eq:proof-auec-1}, observe that for any \(\lambda>0\) the set
  \begin{equation}
    U_{\lambda} = \{ (x,y) \in \ov\Gamma \times \ov\Gamma : G(x,y) < \lambda \}
  \end{equation}
  is an open neighborhood of \(\bd\Gamma\times\bd\Gamma\), and since
  \(\bd\Gamma\times\bd\Gamma\) can be written as an intersection of closed sets
  \begin{equation}
    \bd\Gamma\times\bd\Gamma = \bigcap_{r>0}
    \left((\ov{\Gamma} \setminus B_{\Gamma}(1,r))\times
      (\ov{\Gamma}\setminus B_{\Gamma}(1,r))\right),
  \end{equation}
  by compactness every \(U_{\lambda}\) contains one of these sets. This means that for sufficiently large \(r\)
  \begin{equation}
    \int_{\Gamma\setminus B} G\circ J \,d\mu_{R} < \lambda \mu_{R}(\Gamma\setminus B) \leq \lambda.
  \end{equation}
  This gives estimates in \eqref{eq:proof-auec-1}, and proves that (2) implies (1).
\end{proof}

Now, we may proceed to the equidistribution theorem, asserting that for a hyperbolic group, the product measure \(\mu_{PS}\otimes\mu_{PS}\) can be approximated in the way described in Lemma~\ref{prop:asymp-uniform-equiv-conditions} by a reasonably well-behaved family of measures on \(\Gamma\). 

\begin{theorem}\label{prop:equidistribution}
  For any hyperbolic group \(\Gamma\) endowed with a metric \(d\in\mathcal{D}(\Gamma)\), there exists a family of measures \(\{\mu_{R} : R\geq 0\}\subseteq \Prob(\Gamma)\) such that
  \begin{enumerate}
  \item \(\lim_{R\to\infty} J_{*}\mu_{R} = \mu_{PS} \otimes \mu_{PS}\) in the
    weak-* topology on \(C(\ov\Gamma\times\ov\Gamma)^{*}\)
  \item there exists \(h>0\) such that every
    \(\mu_{R}\) is supported in the annulus \(A_{R,h}\),
  \item the estimate \(\mu_{R}(\{g\})\lasymp \omega^{-R}\) is
    satisfied uniformly in \(R\).
  \end{enumerate}
\end{theorem}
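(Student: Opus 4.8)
The plan is to build the measures directly from the geometry of double shadows, letting $\mu_R$ record the $\mu_{PS}\otimes\mu_{PS}$-mass that the cover from Proposition~\ref{prop:double-shadows-cover} attributes to each $g\in A_R$. First I would invoke Lemma~\ref{prop:asymp-uniform-equiv-conditions} to reduce condition (1) to its $J'$-version, namely $J'_{*}\mu_R\to\mu_{PS}\otimes\mu_{PS}$ weakly-*; the hypothesis of that lemma is met because $\mu_{PS}$ is atom-free, being Ahlfors regular of positive dimension $D$ (each ball has measure $\asymp r^{D}\to0$, so singletons are null). The geometric content is then entirely carried by the shrinking shadows.

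Concretely, fix $h,\rho$ large enough for Proposition~\ref{prop:double-shadows-cover}, enumerate the finite set $A_R$ in an arbitrary order, and disjointify the cover by setting $P_g=\Sigma^{2}(g)\setminus\bigcup_{g'<g}\Sigma^{2}(g')$. The sets $P_g$ are Borel, pairwise disjoint, and (since the $\Sigma^{2}(g)$ already cover $\bd\Gamma\times\bd\Gamma$) partition $\bd\Gamma\times\bd\Gamma$. Defining $\mu_R(\{g\})=(\mu_{PS}\otimes\mu_{PS})(P_g)$ therefore yields a genuine probability measure on $\Gamma$ supported in $A_R=A_{R,h}$, which already establishes condition (2).

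For the weight estimate (3), note that $P_g\subseteq\Sigma^{2}(g)=B_{\bd\Gamma}(\hat g,r)\times B_{\bd\Gamma}(\check g,r)$ with $r=e^{-\epsilon(\abs{g}/2-\rho)}$ and $\abs{g}\in[R-h,R+h]$. Ahlfors regularity gives $\mu_{PS}(B_{\bd\Gamma}(\hat g,r))\asymp r^{D}$, whence
\[
\mu_R(\{g\})\leq(\mu_{PS}\otimes\mu_{PS})(\Sigma^{2}(g))\asymp r^{2D}=e^{2\epsilon D\rho}\,\omega^{-\abs{g}}\asymp\omega^{-R},
\]
using $e^{\epsilon D}=\omega$; on the bounded range of $R$ where $r$ is too large for the Ahlfors estimate, $\omega^{-R}$ is bounded below and the trivial bound $\mu_R(\{g\})\leq1$ is absorbed into the constant.

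Finally, equidistribution in its $J'$-form is almost automatic. For $F\in C(\bd\Gamma\times\bd\Gamma)$,
\[
\int F\,dJ'_{*}\mu_R-\int F\,d(\mu_{PS}\otimes\mu_{PS})=\sum_{g\in A_R}\int_{P_g}\bigl(F(\hat g,\check g)-F(x,y)\bigr)\,d(\mu_{PS}\otimes\mu_{PS})(x,y),
\]
and every $(x,y)\in P_g\subseteq\Sigma^{2}(g)$ lies within $r=e^{-\epsilon(\abs{g}/2-\rho)}\lasymp e^{-\epsilon R/2}$ of $(\hat g,\check g)$ in each coordinate. Since this radius tends to $0$ uniformly over $g\in A_R$, the uniform continuity of $F$ on the compact space $\bd\Gamma\times\bd\Gamma$ bounds the integrand by a modulus of continuity vanishing as $R\to\infty$; summing against the total mass $\sum_{g}(\mu_{PS}\otimes\mu_{PS})(P_g)=1$ forces the difference to $0$. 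The one point genuinely requiring care is the uniform shrinking of the shadow diameters, which rests on $\abs{g}\approx R$ throughout $A_R$; granting Proposition~\ref{prop:double-shadows-cover} and Ahlfors regularity, the remaining steps are routine.
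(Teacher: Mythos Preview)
Your proof is correct and follows essentially the same approach as the paper: disjointify the cover of double shadows from Proposition~\ref{prop:double-shadows-cover}, assign to each $g\in A_R$ the $\mu_{PS}\otimes\mu_{PS}$-mass of its piece, then verify (1) via Lemma~\ref{prop:asymp-uniform-equiv-conditions} using uniform continuity and the uniformly shrinking shadow diameters. You are in fact a bit more careful than the paper in explicitly noting why $\mu_{PS}$ is atom-free and in handling the small-$R$ regime for the Ahlfors estimate.
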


Notice, that conditions (2) and (3) above are as close as one can get to uniform distribution on the annulus. Indeed, in the context of general hyperbolic groups, from the very beginning all equalities and inequalities are approximate.

\begin{proof}[Proof of Theorem~\ref{prop:equidistribution}]
  The construction of measures \(\mu_{R}\) will proceed analogously to
  the proof of \cite[Proposition 5.4]{Garncarek2014}. Using Proposition \ref{prop:double-shadows-cover}, we may take \(\rho\) and \(h\) sufficiently large for the double shadows \(\Sigma_{2}(g,\rho)\) with \(g\in A_{R,h}\) to cover \(\bd\Gamma\times\bd\Gamma\).
  Fix \(R\) and define a family \(\{E_{g} : g\in A_{R,h}\}\) of subsets
  of \(\bd\Gamma\times\bd\Gamma\) by putting an arbitrary linear order
  on \(A_{R,h}\), and taking inductively 
  \begin{equation} E_{g} = \Sigma^{2}(g) \setminus
    \bigcup_{h<g} E_{h} \subseteq \Sigma^{2}(g).
  \end{equation} These sets form a measurable partition of
  \(\bd\Gamma\times\bd\Gamma\), and we may put
  \begin{equation}
    \mu_{R}(\{g\}) = \mu_{PS}^{2}(E_{g}) \leq \mu^{2}(\Sigma^{2}(g)).
  \end{equation}
  Since \(\mu_{PS}\) is normalized to be a probability measure, conditions (2) and (3) are satisfied.

  It remains to verify condition (1), or equivalently, condition (2) of Lemma~\ref{prop:asymp-uniform-equiv-conditions}. Take \(F\in C(\bd\Gamma\times\bd\Gamma)\). The product \(\bd\Gamma\times\bd\Gamma\) is a compact metric space when endowed with the \(\ell^{\infty}\)-product \(d_{\epsilon}^{2}\) of the visual metrics \(d_{\epsilon}\) on the factors, and therefore \(F\) is uniformly continuous, and admits a modulus of continuity, i.e.\ a non-decreasing continuous function \(m\colon [0,\infty)\to[0,\infty)\) such that \(m(0)=0\), and 
  \begin{equation}
    \abs{ F(\xi,\eta)-F(\xi',\eta')} \leq \max \{ m( d_{\epsilon}(\xi,\xi')) , m(d_{\epsilon}(\eta,\eta'))\}.
  \end{equation}
  We now have
  \begin{equation}
    \int_{\Gamma} F\,dJ'_{*}\mu_{R} = \int_{\Gamma}F\circ J'\,d\mu_{R}= \sum_{g\in A_{R}} F(\hat{g},\check{g})\mu_{R}(\{g\}) =  \sum_{g\in A_{R}}\int_{E_{g}} F(\hat{g},\check{g})\,d\mu_{PS}^{2},
  \end{equation}
  and since for \(g\in A_{R}\) and \(\zeta,\zeta'\in E_{g}\subseteq \Sigma^{2}(g,\rho)\) we have \(d_{\epsilon}^{2}(\zeta,\zeta')\leq Ce^{-\epsilon R/2}\) for some constant \(C\), we finally get
  \begin{equation}
    \begin{split}
      \Big\lvert\int_{\bd\Gamma\times\bd\Gamma} F\,d\mu_{PS}^{2} & -
        \int_{\Gamma} F\,dJ'_{*}\mu_{R}\Big\rvert \leq \\
        & \leq
      \sum_{g\in A_{R}} \int_{E_{g}}\abs{F(\xi,\eta) -
        F(\hat{g},\check{g}) } \,d\mu_{PS}^{2}(\xi,\eta) \leq \\
      & \leq \sum_{g\in A_{R}}  \mu_{PS}^{2}(E_{g}) m(Ce^{-\epsilon R/2}) \lasymp \card{A_{R}}^{-1} \sum_{g\in A_{R}} m(Ce^{-\epsilon R/2}) =\\
      & = m(Ce^{-\epsilon R/2}) \xrightarrow[R\to\infty]{} 0. \qedhere
    \end{split}
  \end{equation}
\end{proof}

\subsection{Asymptotic Schur orthogonality relations}
\label{sec:asympt-orth}

Let us stop for a moment and recall the classical Schur orthogonality relations. Let \(G\) be a finite group, and let \(\pi\) be its irreducible representation on a finite-dimensional Hilbert space \(V\). The matrix coefficients of \(\pi\) are functions \(D_{uv} \in \CC[G]\), indexed by \(u,v\in V\), defined by
\begin{equation}
  D_{uv}(g) = \langle \pi(g)u,v\rangle_{V}.
\end{equation}
As \(G\) is finite, \(\CC[G]\) is naturally a Hilbert space with the normalized inner product
\begin{equation}
  \langle \phi,\psi\rangle = \frac{1}{\card{G}} \sum_{g\in G} \phi(g)\conj{\psi(g)}.
\end{equation}
With this notation, Schur orthogonality (for a single representation) can be rephrased as the identity
\begin{equation}
   \dim{V} \cdot\langle D_{uv}, D_{u'v'}\rangle  =  \langle u,u'\rangle\conj{\langle v,v'\rangle}.
\end{equation}

Now, let us return to our original setting. Let \(\pi\colon \Gamma\to\mathcal{U}(\mathcal{H})\) be the boundary representation of \(\Gamma\) associated to the metric \(d\in\mathcal{D}(\Gamma)\). Also, define
\begin{equation}
  \tilde{\pi}(g)=\frac{1}{\langle \pi(g)\one,\one\rangle}\pi(g).
\end{equation}
The matrix coefficient
\begin{equation}
\Xi(g) = \langle \pi(g)\one,\one\rangle  \in\mathbb{R}^{+}
\end{equation}
appearing in the above normalization is often referred to as the \emph{Harish-Chandra function} of the representation \(\pi\). Recall, that by \cite[Lemma 5.1]{Garncarek2014} it can be estimated by
\begin{equation}
  \Xi(g) \asymp \omega^{-\abs{g}/2}(1+\abs{g})
\end{equation}
uniformly in \(g\in\Gamma\). We clearly have \(\tilde{\pi}(g^{-1})=\tilde{\pi}(g)^{*}\). If we denote, following \cite{Garncarek2014},
\begin{equation}
  \tilde{P}_{g}(\xi) = \frac{1}{\Xi(g)} \left(\frac{dg_{*}\mu}{d\mu}(\xi)\right)^{1/2},
\end{equation}
then
\begin{equation}
  [\tilde{\pi}(g)v](\xi) = \tilde{P}_{g}(\xi)v(g^{-1}\xi).
\end{equation}

With this notation, we can formulate our Asymptotic Orthogonality Theorem as follows.

\begin{theorem}[Asymptotic Orthogonality] \label{prop:asymptotic-orthogonality-general}
Let \(\{\mu_{R}: R>0\}\) be a family of probability measures on \(\Gamma\), satisfying conditions (1)-(3) of Theorem~\ref{prop:equidistribution}. Then for all continuous functions
    \(f_{1},f_{2} \in C(\ov{\Gamma})\), and vectors
    \(v_{1}, v_{2}, w_{1}, w_{2}\in L^{2}(\bd\Gamma,\mu)\)
    we have
    \begin{multline}\label{eq:as-ort-gen}
      \lim_{R\to\infty} \int f_{1}(g)
      f_{2}(g^{-1}) \langle \tilde{\pi}(g)v_{1}, w_{1}\rangle
      \conj{\langle \tilde{\pi}(g)v_{2}, w_{2}\rangle} \,d\mu_{R}(g)= \\ =
      \langle f_{2}|_{\bd\Gamma} v_{1},v_{2}\rangle
      \conj{\langle w_{1},f_{1}|_{\bd\Gamma} w_{2}\rangle}.
    \end{multline}
\end{theorem}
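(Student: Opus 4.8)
The plan is to combine three ingredients: a uniform $L^{2}(\mu_{R})$ bound on the normalized matrix coefficients coming from property RD, a pointwise asymptotic for a single matrix coefficient, and the equidistribution Theorem~\ref{prop:equidistribution}. Throughout, $\mu=\mu_{PS}$.

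First I would record the uniform bound that reduces everything to a dense class of vectors. Since each $\mu_{R}$ is supported in $A_{R,h}$ with $\mu_{R}(\set{g})\lasymp\omega^{-R}$, and since $\Xi(g)^{2}\asymp\omega^{-R}(1+R)^{2}$ there by the Harish-Chandra estimate, Proposition~\ref{prop:annular-RD-matrix-coeffs} yields
\begin{multline*}
  \int_{\Gamma}\abs{\langle\tilde\pi(g)v,w\rangle}^{2}\,d\mu_{R}(g) = \int_{\Gamma}\frac{\abs{\langle\pi(g)v,w\rangle}^{2}}{\Xi(g)^{2}}\,d\mu_{R}(g) \lasymp \\ \lasymp \frac{1}{(1+R)^{2}}\sum_{g\in A_{R,h}}\abs{\langle\pi(g)v,w\rangle}^{2} \lasymp \norm{v}^{2}\norm{w}^{2},
\end{multline*}
uniformly in $R$. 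By Cauchy--Schwarz in $L^{2}(\mu_{R})$ the whole left-hand side of \eqref{eq:as-ort-gen} is then bounded by $C\norm{f_{1}}_{\infty}\norm{f_{2}}_{\infty}\norm{v_{1}}\norm{v_{2}}\norm{w_{1}}\norm{w_{2}}$ uniformly in $R$. As the right-hand side of \eqref{eq:as-ort-gen} is a bounded multilinear form in $v_{1},v_{2},w_{1},w_{2}$ as well, a standard three-$\epsilon$ argument reduces the statement to the case where $v_{1},v_{2},w_{1},w_{2}$ lie in the dense subspace $C(\bd\Gamma)\subseteq L^{2}(\bd\Gamma,\mu)$.

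The heart of the argument is the pointwise statement that for $v,w\in C(\bd\Gamma)$
\[
  \sup_{g\in A_{R,h}}\abs{\langle\tilde\pi(g)v,w\rangle - v(\check g)\ov{w(\hat g)}}\xrightarrow[R\to\infty]{}0 .
\]
To prove it I would write $\langle\tilde\pi(g)v,w\rangle=\int_{\bd\Gamma}\tilde P_{g}(\xi)v(g^{-1}\xi)\ov{w(\xi)}\,d\mu(\xi)$ and decompose $\bd\Gamma$ according to the scale $t=(g,\xi)\in[0,\abs g]$. Quasi-conformality gives $\tilde P_{g}(\xi)\asymp\omega^{(g,\xi)}/(1+\abs g)$, and together with Ahlfors regularity (so that $\set{(g,\xi)\in[t,t+1)}$ has $\mu$-measure $\asymp\omega^{-t}$) this shows each scale carries $\tilde P_{g}\,d\mu$-mass $\asymp 1/(1+\abs g)$, while $\int\tilde P_{g}\,d\mu=\langle\tilde\pi(g)\one,\one\rangle=1$ exactly. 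The geometric input is that $(g,\xi)\approx t$ forces $\xi$ within $\asymp e^{-\epsilon t}$ of $\hat g$ and forces $g^{-1}\xi$ within $\asymp e^{-\epsilon(\abs g-t)}$ of $\check g$ (the identity $(g^{-1}\xi,\check g)\approx\abs g-(g,\xi)$, which one checks on a rough geodesic through $g$). Hence on the bulk of scales $t\in[t_{0},\abs g-t_{0}]$ uniform continuity of $v,w$ forces $v(g^{-1}\xi)\ov{w(\xi)}\approx v(\check g)\ov{w(\hat g)}$, whereas the two extreme ranges carry total mass $\asymp t_{0}/(1+\abs g)\to 0$. Choosing $t_{0}$ large and then $R$ large gives the claim, with an error depending only on the moduli of continuity of $v,w$ and on $R$, hence uniform over $g\in A_{R,h}$.

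Granting this, for continuous data the integrand of \eqref{eq:as-ort-gen} converges uniformly over $A_{R,h}$ to $\Phi\circ J'$, where $\Phi\in C(\bd\Gamma\times\bd\Gamma)$ is $\Phi(\xi,\eta)=[f_{1}\ov{w_{1}}w_{2}](\xi)\cdot[f_{2}v_{1}\ov{v_{2}}](\eta)$; here I also use $f_{1}(g)\to f_{1}(\hat g)$ and $f_{2}(g^{-1})\to f_{2}(\check g)$ uniformly over $A_{R,h}$, which follows from continuity of $f_{1},f_{2}$ on $\ov\Gamma$ and a compactness argument showing that $g$ and $\hat g$ approach the same boundary point. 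Since $\mu_{R}$ is a probability measure and all factors are uniformly bounded, $\int(\text{integrand})\,d\mu_{R}$ and $\int\Phi\,dJ'_{*}\mu_{R}$ have the same limit, and Theorem~\ref{prop:equidistribution} with Lemma~\ref{prop:asymp-uniform-equiv-conditions} give $J'_{*}\mu_{R}\to\mu\otimes\mu$ weak-*. Thus the limit equals $(\int f_{1}\ov{w_{1}}w_{2}\,d\mu)(\int f_{2}v_{1}\ov{v_{2}}\,d\mu)=\langle f_{2}\res_{\bd\Gamma}v_{1},v_{2}\rangle\,\ov{\langle w_{1},f_{1}\res_{\bd\Gamma}w_{2}\rangle}$, as required. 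The main obstacle is the pointwise asymptotic: the subtlety is that $\tilde P_{g}\,d\mu$ does \emph{not} concentrate near $\hat g$ but spreads its unit mass evenly across the $\asymp\abs g$ scales, so the point is not that $v(g^{-1}\xi)$ or $\ov{w(\xi)}$ is individually constant, but that on the overwhelming bulk of scales both are simultaneously close to the boundary values $v(\check g)$ and $\ov{w(\hat g)}$; making the tail estimates and the identity $(g^{-1}\xi,\check g)\approx\abs g-(g,\xi)$ uniform over the whole annulus is where the care lies, and everything else is soft.
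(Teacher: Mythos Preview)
Your proof is correct and follows essentially the same three-step strategy as the paper: a uniform bound on the quadrilinear form via the Harish-Chandra estimate and Proposition~\ref{prop:annular-RD-matrix-coeffs}, pointwise convergence of $\langle\tilde\pi(g)v,w\rangle$ to $v(\check g)\ov{w(\hat g)}$ on a dense subspace, and then the equidistribution theorem. The only difference is cosmetic: the paper takes Lipschitz functions as the dense class and quotes the quantitative estimate \(\abs{\langle\tilde\pi(g)v,w\rangle - v(\check g)\ov{w(\hat g)}}\lasymp_{v,w}(1+\abs g)^{-1/D}\) from \cite[Lemma~5.3]{Garncarek2014}, whereas you take continuous functions and re-derive the qualitative $o(1)$ version via the scale decomposition of $\tilde P_g\,d\mu$---this is in fact how that lemma is proved, so your argument is a self-contained reproduction of the same idea.
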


To see how it relates to the Schur orthogonality relations, let us specify to the case where \(f_{1}=f_{2}=\one_{\ov \Gamma}\). If we forget for a moment about the limit, then the left-hand side is an analogue of the scalar product of matrix coefficients of \(\pi\) restricted to an annulus, with some weights added. By passing to the limit, we get an asymptotic version of the orthogonality relation between the matrix coefficients.

The proof of Theorem~\ref{prop:asymptotic-orthogonality-general} will be divided into two lemmas. But first, we will
introduce some auxiliary notation. Fix
\(f_{1},f_{2}\in C(\ov{\Gamma})\), and  denote the expression under
the limit in the left-hand side of \eqref{eq:as-ort-gen} as
\begin{equation}
  \Phi_{R}(v_{1},v_{2},w_{1},w_{2}) = \int f_{1}(g) f_{2}(g^{-1}) \langle \tilde{\pi}(g)v_{1}, w_{1}\rangle \conj{\langle \tilde{\pi}(g)v_{2}, w_{2}\rangle}\,d\mu_{R}(g).
\end{equation}
It can be regarded as a quadrilinear form on \(\mathcal{H} \times \mathcal{H}^{*} \times \mathcal{H}^{*} \times \mathcal{H}\).

\begin{lemma}\label{prop:lemma-as-ort-uni-bound}
  For any \(f_{1},f_{2}\in C(\bd\Gamma)\) the corresponding quadrilinear forms \(\Phi_{R}\) are uniformly bounded with respect to the norm
  \begin{equation}
    \norm{\Phi} = \inf\{ C > 0 : \Phi(v_{1},v_{2},w_{1},w_{2}) \leq C\norm{v_{1}}\norm{v_{2}}\norm{w_{1}}\norm{w_{2}}\}.
  \end{equation}
\end{lemma}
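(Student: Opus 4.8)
The plan is to bound the integrand pointwise by the sup-norms of $f_1$ and $f_2$, and then control the surviving average of products of matrix coefficients by Cauchy--Schwarz, reducing everything to the annular estimate of Proposition~\ref{prop:annular-RD-matrix-coeffs}. First I would discard $f_1$ and $f_2$: since $\abs{f_1(g)f_2(g^{-1})} \leq \norm{f_1}_{\infty}\norm{f_2}_{\infty}$ and $\mu_R$ is supported in $A_R = A_{R,h}$ by condition (2) of Theorem~\ref{prop:equidistribution}, it suffices to bound, uniformly in $R$, the quantity
\begin{equation*}
  \int \abs{\langle \tilde{\pi}(g) v_1, w_1\rangle}\,\abs{\langle \tilde{\pi}(g) v_2, w_2\rangle}\,d\mu_R(g) = \sum_{g\in A_R} \mu_R(\{g\})\frac{\abs{\langle \pi(g)v_1,w_1\rangle}\,\abs{\langle \pi(g)v_2,w_2\rangle}}{\Xi(g)^2},
\end{equation*}
where I have used $\tilde{\pi}(g) = \pi(g)/\Xi(g)$.

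Next I would apply the Cauchy--Schwarz inequality in $\ell^2(A_R,\mu_R)$ to split the product, bounding the sum above by
\begin{equation*}
  \prod_{i=1}^{2}\left(\sum_{g\in A_R} \frac{\mu_R(\{g\})}{\Xi(g)^2}\,\abs{\langle \pi(g)v_i,w_i\rangle}^2\right)^{1/2}.
\end{equation*}
The key point is that the weight is uniformly small. For $g\in A_{R,h}$ one has $\abs g \in [R-h,R+h]$ with $h$ fixed, so $\omega^{-\abs g/2}\asymp \omega^{-R/2}$ and $1+\abs g \asymp 1+R$; the Harish-Chandra estimate $\Xi(g)\asymp \omega^{-\abs g/2}(1+\abs g)$ then gives $\Xi(g)^2 \asymp \omega^{-R}(1+R)^2$, while condition (3) of Theorem~\ref{prop:equidistribution} gives $\mu_R(\{g\})\lasymp \omega^{-R}$. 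Hence $\mu_R(\{g\})/\Xi(g)^2 \lasymp (1+R)^{-2}$, with implicit constant independent of both $R$ and $g$.

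Finally, pulling this weight out of each factor and invoking the refined annular property RD of Proposition~\ref{prop:annular-RD-matrix-coeffs},
\begin{equation*}
  \sum_{g\in A_R} \frac{\mu_R(\{g\})}{\Xi(g)^2}\,\abs{\langle \pi(g)v_i,w_i\rangle}^2 \lasymp \frac{1}{(1+R)^2}\sum_{g\in A_{R,h}} \abs{\langle \pi(g)v_i,w_i\rangle}^2 \lasymp_{h} \norm{v_i}^2\norm{w_i}^2,
\end{equation*}
so each factor is $\lasymp_h \norm{v_i}\norm{w_i}$. Multiplying the two factors together with $\norm{f_1}_{\infty}\norm{f_2}_{\infty}$ yields $\abs{\Phi_R(v_1,v_2,w_1,w_2)} \lasymp_h \norm{f_1}_{\infty}\norm{f_2}_{\infty}\norm{v_1}\norm{v_2}\norm{w_1}\norm{w_2}$, and therefore $\norm{\Phi_R}\lasymp_h \norm{f_1}_{\infty}\norm{f_2}_{\infty}$ uniformly in $R$, as required.

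The only genuine content is the exponent bookkeeping: the factor $(1+R)^2$ gained from Proposition~\ref{prop:annular-RD-matrix-coeffs} must be cancelled \emph{exactly} by the $(1+R)^{-2}$ coming from $\Xi(g)^2$ together with $\mu_R(\{g\})$. This is precisely where the sharpened annular estimate, with its reduced exponent, is indispensable; the standard property RD bound (exponent $s=3$) would leave a surviving positive power of $R$, and the forms $\Phi_R$ would fail to be uniformly bounded.
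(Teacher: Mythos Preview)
Your proof is correct and follows essentially the same approach as the paper: discard $f_1,f_2$ by their sup-norms, use $\mu_R(\{g\})/\Xi(g)^2 \lasymp (1+R)^{-2}$ on the annulus (from the Harish--Chandra estimate and condition~(3) of Theorem~\ref{prop:equidistribution}), and appeal to Cauchy--Schwarz together with Proposition~\ref{prop:annular-RD-matrix-coeffs} so that the $(1+R)^2$ factors cancel exactly. The only cosmetic difference is that the paper pulls out the weight before applying Cauchy--Schwarz, whereas you apply Cauchy--Schwarz first and then bound the weight inside each factor; the two are equivalent.
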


\begin{proof}
  Without loss of generality we may assume that \(\norm{f_{i}}_{\infty}\leq 1\). Then for \(v_{1},v_{2},w_{1},w_{2} \in \mathcal{H}\) we get
  \begin{equation}
    \begin{split}
      \abs{\Phi_{R}(v_{1},v_{2},w_{1},w_{2})} & \leq \int
      \abs{\langle\tilde{\pi}(g)v_{1},w_{1}\rangle \langle
        \tilde{\pi}(g)v_{2},w_{2}\rangle}\,d\mu_{R}(g) = \\
      & = \sum_{g\in A_{R}} \frac{\mu_{R}(g)}{\Xi(g)^{2}} \abs{ \langle \pi(g)v_{1},w_{1}\rangle \langle \pi(g)v_{2},w_{2} \rangle} \lasymp \\
      & \lasymp \frac{1}{(1+R)^{2}}\sum_{g\in A_{R}} \abs{ \langle \pi(g)v_{1},w_{1}\rangle \langle \pi(g)v_{2},w_{2} \rangle}.
    \end{split}
  \end{equation}
  But by Proposition~\ref{prop:annular-RD-matrix-coeffs},
  \begin{equation}
    \begin{split}
      \sum_{g\in A_{R}} \lvert \langle \pi(g)v_{1}, & w_{1}\rangle \langle \pi(g)v_{2},w_{2} \rangle \rvert \leq \\
      & \leq \left(\sum_{g\in A_{R}} \abs{ \langle
          \pi(g)v_{1},w_{1}\rangle}^{2}\right)^{1/2} \left( \sum_{g\in
          A_{R}}\abs{\langle \pi(g)v_{2},w_{2}
          \rangle}^{2}\right)^{1/2} \lasymp \\
      & \lasymp (1+R)^{2}\norm{v_{1}}\norm{w_{1}}\norm{v_{2}}\norm{w_{2}},
    \end{split}
  \end{equation}
  which concludes the proof.
\end{proof}

\begin{lemma} \label{prop:lemma-as-ort-lip-convergence}
  For \(v_{i},w_{i}\in\Lip(\bd\Gamma,d_{\epsilon})\), we have the convergence
  \begin{equation}
    \lim_{R\to\infty} \Phi_{R}(v_{1},v_{2},w_{1},w_{2}) = \langle f_{2}|_{\bd\Gamma} v_{1},v_{2}\rangle
      \conj{\langle  w_{1},f_{1}|_{\bd\Gamma}w_{2}\rangle}.
  \end{equation}
\end{lemma}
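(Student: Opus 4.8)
The plan is to reduce the convergence of $\Phi_R$ to a pointwise asymptotic for the normalized matrix coefficients, and then invoke the equidistribution Theorem~\ref{prop:equidistribution}. Concretely, I would first establish that for fixed Lipschitz $v,w$ one has, uniformly in $g$,
\begin{equation}
  \langle\tilde{\pi}(g)v,w\rangle = v(\check g)\,\conj{w(\hat g)} + O\!\left(\tfrac{1}{1+\abs{g}}\right),
\end{equation}
with implied constant depending only on $\norm{v}_\infty,\norm{w}_\infty,\Lip(v),\Lip(w)$. Taking the product of two such coefficients, and using that each factor is bounded for large $R$, the integrand of $\Phi_R$ becomes $f_1(g)f_2(g^{-1})\,v_1(\check g)\conj{w_1(\hat g)}\,\conj{v_2(\check g)}w_2(\hat g)$ up to an $O(1/R)$ error; since $\mu_R$ is a probability measure supported in $A_R$, that error integrates to a quantity tending to $0$.

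For the pointwise asymptotic the key is a \emph{concentration estimate}: the probability density $\tilde P_g\,d\mu$ concentrates at $\hat g$, quantitatively
\begin{equation}
  \int_{\bd\Gamma}\tilde P_g(\xi)\,d_\epsilon(\xi,\hat g)\,d\mu(\xi)\ \lasymp\ \frac{1}{1+\abs{g}}.
\end{equation}
I would prove this by a layer-cake decomposition according to the value of the Gromov product $(g,\xi)$: combining $\tilde P_g(\xi)\asymp\omega^{(g,\xi)}/(1+\abs{g})$ (from the quasiconformality of $\mu$ and the Harish--Chandra bound $\Xi(g)\asymp\omega^{-\abs g/2}(1+\abs g)$), the relation $(\hat g,\xi)\approx(g,\xi)$, and the Ahlfors regularity estimate $\mu\{\xi:(g,\xi)\ge t\}\asymp\omega^{-t}$, the level set $\{(g,\xi)\in[t,t+1]\}$ contributes $\asymp e^{-\epsilon t}/(1+\abs g)$, and summing the geometric series over $t=0,\dots,\abs g$ gives the claim. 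The Lipschitz asymptotic then follows in two steps: replacing $\conj{w(\xi)}$ by $\conj{w(\hat g)}$ costs $\Lip(w)\int\tilde P_g\,d_\epsilon(\cdot,\hat g)\,d\mu$, while the remaining integral $\int\tilde P_g(\xi)v(g^{-1}\xi)\,d\mu(\xi)=\langle\tilde{\pi}(g)v,\one\rangle=\langle v,\tilde P_{g^{-1}}\rangle$ is, by the same estimate applied to $g^{-1}$ and $\check g=\widehat{g^{-1}}$, within $\Lip(v)\int\tilde P_{g^{-1}}\,d_\epsilon(\cdot,\check g)\,d\mu$ of $v(\check g)$. This second step crucially uses $\tilde{\pi}(g)^*=\tilde{\pi}(g^{-1})$ to move the concentration into the correct variable, so that we never need $v\circ g^{-1}$ to be almost constant on the support of $\tilde P_g$.

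Finally I would collect the variables. Writing $G(\xi,\eta)=[f_1\,w_2\conj{w_1}](\xi)\cdot[f_2\,v_1\conj{v_2}](\eta)$, which is continuous on $\bd\Gamma\times\bd\Gamma$, and replacing $f_1(g),f_2(g^{-1})$ by $f_1(\hat g),f_2(\check g)$ (legitimate since $f_i\in C(\ov\Gamma)$ and $g,g^{-1}$ approach $\hat g,\check g$ in $\ov\Gamma$ as $\abs g\to\infty$, with error controlled by the moduli of continuity uniformly on $A_R$), the integrand of $\Phi_R$ equals $G\circ J'$ up to a vanishing error. Hence $\Phi_R=\int G\,dJ'_*\mu_R+o(1)$, and Theorem~\ref{prop:equidistribution}(1) together with Lemma~\ref{prop:asymp-uniform-equiv-conditions} gives $J'_*\mu_R\to\mu\otimes\mu$, so the limit is $\iint G\,d\mu\,d\mu=\big(\int f_1\conj{w_1}w_2\,d\mu\big)\big(\int f_2 v_1\conj{v_2}\,d\mu\big)=\conj{\langle w_1,f_1 w_2\rangle}\,\langle f_2 v_1,v_2\rangle$, as required.

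The main obstacle is the concentration estimate together with its uniformity: everything hinges on showing that $\tilde P_g\,d\mu\to\delta_{\hat g}$ at a rate, here $O(1/\abs g)$, that is uniform over the annulus $A_R$, so that the accumulated errors from all three approximations vanish after integrating against the probability measures $\mu_R$. Once the rate is secured, passing to the limit is routine weak-* convergence.
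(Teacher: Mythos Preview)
Your proposal is correct and follows the same strategy as the paper: approximate $\langle\tilde\pi(g)v,w\rangle$ by $v(\check g)\,\conj{w(\hat g)}$ with a rate uniform over $A_R$, then invoke equidistribution. The only differences are cosmetic: the paper quotes the matrix-coefficient asymptotic as \cite[Lemma~5.3]{Garncarek2014} (with rate $(1+\abs{g})^{-1/D}$) rather than deriving it from your concentration estimate, and it sidesteps your separate replacement $f_i(g)\to f_i(\hat g)$ by building $F\in C(\ov\Gamma\times\ov\Gamma)$ through the retraction $p$ of Lemma~\ref{prop:retraction-onto-boundary} and applying $J_*\mu_R\to\mu_{PS}\otimes\mu_{PS}$ directly.
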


\begin{proof}
  Define \(F\in C(\ov{\Gamma}\times\ov{\Gamma})\) by
  \begin{equation}
    F(g,h) = f_{1}(g)f_{2}(h)v_{1}(p(h)) \conj{w_{1}(p(g))v_{2}(p(h))} w_{2}(p(g)),
  \end{equation}
  and put
  \begin{equation}
    \begin{split}
    \alpha_{i}(g) &= \langle \tilde{\pi}(g)v_{i},w_{i} \rangle,\\ \beta_{i}(g) &= v_{i}(\check{g})\conj{w_{i}(\hat{g})}, \\ d\nu_{R}(g)&=\lvert {f_{1}(g)f_{2}(g^{-1})}\rvert d\mu_{R}(g).
    \end{split}
  \end{equation}
  By definition of \(\mu_{R}\) we have
  \begin{equation} \label{eq:as-ort-proof-1}
    \lim_{R\to\infty} \int F\,dJ_{*}\mu_{R} = \int F\, d\mu^{2} = \langle f_{2}|_{\bd\Gamma} v_{1},v_{2}\rangle \conj{\langle  w_{1},f_{1}|_{\bd\Gamma}w_{2}\rangle},
  \end{equation}
  and we need to estimate the difference
  \begin{equation}\label{eq:as-ort-proof-difference}
    \begin{split}
      \Big\lvert \Phi_{R}(v_{1}, & v_{2},w_{1},w_{2})  -
        \int F\,dJ_{*}\mu_{R}\Big\rvert  \leq \\
        &\leq  \int
        \abs{\alpha_{1}(g)\conj{\alpha_{2}(g)} -
          \beta_{1}(g)\conj{\beta_{2}(g)}} \,d\nu_{R}(g).
      \end{split}
  \end{equation}
  By \cite[Lemma 5.3]{Garncarek2014}, for \(v,w \in \Lip(\bd\Gamma,d_{\epsilon})\) we have
  \begin{equation}
    \abs{ \langle \tilde{\pi}(g)v,w\rangle - v(\check{g})\conj{w(\hat{g})} } \lasymp_{v,w} (1+\abs{g})^{-1/D},
  \end{equation}
  so for \(g\in A_{R}\), with estimates depending on \(v_{i}\) and \(w_{i}\),
 \begin{equation}\label{eq:orthogonality-proof-2}
    \begin{split}
      \Big\lvert \alpha_{1}(g)\conj{\alpha_{2}(g)} & -
        \beta_{1}(g)\conj{\beta_{2}(g)} \Big\rvert \leq \\ &
      \leq \abs{ \alpha_{1}(g)\conj{(\alpha_{2}(g)-\beta_{2}(g))}} + \abs{ (\alpha_{1}(g) -
        \beta_{1}(g))\conj{\beta_{2}(g)}} \lasymp \\
      & \lasymp (\abs{\alpha_{1}(g)}+\abs{\beta_{2}(g)})(1+R)^{-1/D} \lasymp \\
      & \lasymp (\abs{\beta_{1}(g)}+\abs{\beta_{2}(g)} + (1+R)^{-1/D})(1+R)^{-1/D}.
    \end{split}
  \end{equation}
    But 
  \begin{equation}
    \abs{\beta_{i}(g)} \leq \norm{v_{i}}_{\infty}\norm{w_{i}}_{\infty},
  \end{equation}
  hence the differences in \eqref{eq:orthogonality-proof-2} converge to \(0\) uniformly as \(R\to \infty\),  and since \(\nu_{R}(\Gamma)\) is bounded by \(\norm{f_{1}}_{\infty}\norm{f_{2}}_{\infty}\),  so is the difference in \eqref{eq:as-ort-proof-difference}.
\end{proof}

\begin{proof}[Proof of the Asymptotic Orthogonality Theorem \ref{prop:asymptotic-orthogonality-general}]
By Lemma \ref{prop:lemma-as-ort-uni-bound}, the forms \(\Phi_{R}\) are uniformly bounded. Hence, to show the desired convergence, it suffices to do so for vectors belonging to some dense subspaces of \(\mathcal{H}\). The space of Lipschitz functions is such a space, and by Lemma \ref{prop:lemma-as-ort-lip-convergence}, the convergence holds. We are thus done with the proof.
\end{proof}

As an immediate corollary, we deduce an ergodic theorem \`a la Bader-Muchnik for general hyperbolic groups. For \(f\in C(\bd\Gamma)\) let \(m(f)\) denote the corresponding multiplication operator on \(\mathcal{H}\), and let \(P\) be the orthogonal projection onto the subspace spanned by \(\one_{\bd\Gamma}\).

\begin{corollary}
 Let \(\Gamma\) be a hyperbolic group endowed with a metric \(d\in\mathcal{D}(\Gamma)\), and let $\mu_{R}\in \Prob(\Gamma)$ be a family of measures satisfying the conditions (1)-(3) of Theorem~\ref{prop:equidistribution}. Then for any function $f\in C(\ov{\Gamma})$ we have 
 \begin{equation}
   \lim_{R\to \infty}\sum_{g \in \Gamma}\mu_{R}(g)f(g) \tilde{\pi}(g)\to m(f\res_{\bd\Gamma})P
 \end{equation}
in the weak operator topology.
\end{corollary}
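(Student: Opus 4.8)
The plan is to recognize this corollary as a direct specialization of the Asymptotic Orthogonality Theorem~\ref{prop:asymptotic-orthogonality-general}, obtained by taking one of the two matrix coefficients in the quadrilinear form to be the normalized Harish-Chandra coefficient, which is identically equal to $1$. Writing $T_{R} = \sum_{g\in\Gamma}\mu_{R}(g)f(g)\tilde\pi(g)$, convergence of $T_{R}$ to $m(f\res_{\bd\Gamma})P$ in the weak operator topology means exactly that $\langle T_{R}v,w\rangle \to \langle m(f\res_{\bd\Gamma})Pv,w\rangle$ for all $v,w\in\mathcal{H}$, so it suffices to establish this scalar convergence.

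First I would unwind the target quantity. Since $\mu_{PS}$ is a probability measure, $\one=\one_{\bd\Gamma}$ is a unit vector, so $P$ is the rank-one projection $Pv=\langle v,\one\rangle\one$. Consequently $m(f\res_{\bd\Gamma})Pv = \langle v,\one\rangle\, f\res_{\bd\Gamma}$ as an element of $\mathcal{H}$, and therefore
\begin{equation*}
  \langle m(f\res_{\bd\Gamma})Pv,w\rangle = \langle v,\one\rangle\,\langle f\res_{\bd\Gamma},w\rangle = \langle v,\one\rangle\,\conj{\langle w,f\res_{\bd\Gamma}\rangle}.
\end{equation*}

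Next I would invoke Theorem~\ref{prop:asymptotic-orthogonality-general} with the substitutions $v_{1}=v$, $w_{1}=w$, $v_{2}=w_{2}=\one$, $f_{1}=f$, and $f_{2}=\one_{\ov\Gamma}$. The key observation is that $\langle\tilde\pi(g)\one,\one\rangle = \Xi(g)^{-1}\langle\pi(g)\one,\one\rangle = \Xi(g)^{-1}\Xi(g) = 1$ for every $g$, so the factor $\conj{\langle\tilde\pi(g)v_{2},w_{2}\rangle}$ is identically $1$, as is $f_{2}(g^{-1})$. Hence the left-hand side of~\eqref{eq:as-ort-gen} collapses to $\lim_{R\to\infty}\sum_{g}\mu_{R}(g)f(g)\langle\tilde\pi(g)v,w\rangle = \lim_{R\to\infty}\langle T_{R}v,w\rangle$, while the right-hand side becomes $\langle v,\one\rangle\,\conj{\langle w,f\res_{\bd\Gamma}\rangle}$, matching the expression computed above. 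There is essentially no obstacle beyond this bookkeeping: the analytic content is entirely carried by Theorem~\ref{prop:asymptotic-orthogonality-general}, and the corollary is precisely the degenerate case in which the quadrilinear form reduces to a single matrix coefficient paired against the constant function. The only points demanding a little care are the identification of $P$ with the rank-one projection onto $\one$ and the matching of the complex conjugations in the inner products, both of which are immediate from the normalization $\mu_{PS}(\bd\Gamma)=1$.
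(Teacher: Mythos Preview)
Your proof is correct and matches the paper's intent exactly: the paper presents this result as an ``immediate corollary'' of Theorem~\ref{prop:asymptotic-orthogonality-general} without further argument, and the specialization \(f_{1}=f\), \(f_{2}=\one_{\ov\Gamma}\), \(v_{2}=w_{2}=\one_{\bd\Gamma}\) together with the identity \(\langle\tilde\pi(g)\one,\one\rangle=1\) is precisely how one unpacks that word ``immediate''. Your handling of the rank-one projection and the conjugation bookkeeping is accurate.
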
  
 
Note that this ergodic theorem can be seen as a special case of \cite[Proposition 5.4]{Garncarek2014}, and is sufficient for proving the irreducibility of boundary representations associated with Patterson-Sullivan measures of hyperbolic groups.

\section{Convex cocompact groups of isometries of CAT(-1) spaces}
\label{sec:conv-cocomp-groups}

CAT(-1) spaces are a class of very well-behaved hyperbolic metric spaces. Their geometry is more precise, and many inequalities are in fact equalities, yielding better estimates in the asymptotic Schur orthogonality relations. This is essentially due to an equidistribution result of Roblin, deduced from the mixing property of the geodesic flow. The CAT(-1) setting is the perfect receptacle to develop the phenomenon of mixing of the geodesic flow as well as several of its consequences, and to generalize some deep results of ergodic theory of discrete groups in negative curvature \cite{Roblin2003}.  In this section we survey the geometry of CAT(-1) spaces. We freely rely on \cite{Bourdon1995} where the reader could consult for further details.

\subsection{CAT(-1) spaces and groups}\label{CATspaces}

Let \((X,d)\) be a proper (i.e.\ the balls are relatively compact) geodesic metric space. It is said to be CAT(-1) if every geodesic triangle in \(X\) is thinner than its comparison triangle in the hyperbolic plane, see \cite[Introduction]{Bridson1999}. This implies that \(X\) is hyperbolic, and since it is geodesic, the discussion of Section~\ref{sec:geometric-setting} applies to \(X\) with genuine geodesics used in place of rough geodesics. Moreover, if $a_n\to \xi\in\partial X$, and $b_n\to \eta\in\partial X$, then the corresponding Gromov products converge without the additive error, which appears in the general hyperbolic case, namely we have
\begin{equation}
  (\xi,\eta)_x=\lim_{n\to\infty}(a_n,b_n)_x.
\end{equation}
Finally, the formula 
\begin{equation}\label{distance}
        d_x(\xi,\eta)=e^{-(\xi,\eta)_x}
\end{equation}
already defines a metric on $\bd X$ (we set $d_x(\xi,\xi)=0$). This is due to M. Bourdon, we refer to \cite[2.5.1 Th\'eor\`eme]{Bourdon1995} for more details. 

Now, let $\Gamma$ be a non-elementary discrete group of isometries of $X$. The limit set of $\Gamma$, denoted by $\Lambda_{\Gamma}$ is the set of all accumulation points of a fixed orbit of \(\Gamma\) in $\bd X$, i.e.\ $\Lambda_{\Gamma}=\overline{\Gamma x}\cap \partial X$, where the closure is taken in $\overline{X}$. It is independent of the choice of $x\in X$. The geodesic hull $GH(\Lambda_{\Gamma})$ is defined as the union of all geodesics in $X$ with both endpoints in $\Lambda_{\Gamma}$, and the convex hull of $\Lambda_{\Gamma}$, denoted by $CH(\Lambda_{\Gamma})$, is the smallest subset of $X$ containing $GH(\Lambda_{\Gamma})$ with the property that every geodesic segment between any pair of points $x,y \in CH(\Lambda_{\Gamma})$ also lies in $CH(\Lambda_{\Gamma})$. We say that $\Gamma$ is \emph{convex cocompact} if it acts cocompactly on  $CH(\Lambda_{\Gamma})$. It is a well known fact that convex cocompact groups are hyperbolic, see \cite[Corollaire 1.8.3]{Bourdon1995}. In this case, \(\bd\Gamma = \Lambda_{\Gamma}\).

\subsection{Bowen-Margulis-Sullivan measures}\label{BMS}

We return to the general case of a discrete group of isometries of \(X\), possibly non-hyperbolic. It turns out that one can still perform a construction similar to the one yielding the Patterson-Sullivan measures. First, one defines the critical exponent $\alpha(\Gamma)$ of $\Gamma$ by
\begin{equation}
  \alpha(\Gamma)=\inf \Big\{s\in \mathbb{R}^{*}_{+} : \sum_{g \in \Gamma} e^{-sd(g x,x)} <\infty \Big\}
\end{equation}
(in case of a hyperbolic group, we have \(\alpha=D\epsilon = \log\omega\)). Now, a \emph{\(\Gamma\)-invariant conformal density of dimension \(\alpha\)} is a family of positive finite measures \(\{\mu_{x} : x\in X\}\) on \(\ov X\) such that
\begin{enumerate}
\item for \(g\in\Gamma\) we have \(g_{*}\mu_{x}=\mu_{gx}\),
\item for \(x,y\in X\) the measures \(\mu_{x}\) and \(\mu_{y}\) are equivalent, and moreover
  \begin{equation}
    \frac{d\mu_{y}}{d\mu_{x}}(\xi) = \omega^{\beta_{\xi}(x,y)},
  \end{equation}
  where \(\beta_{\xi}(x,y)\) is the \emph{horospherical distance of \(x\) and \(y\) relative to \(\xi\)}, defined as the limit
  \begin{equation}\label{horospherical}
    \beta_{\xi}(x,y) = \lim_{t\rightarrow \infty} d(x,\gamma(t)) - d(y,\gamma(t))
  \end{equation}
  where \(\gamma\) is any geodesic ray in \(X\) representing \(\xi\).
\end{enumerate}
In case of a discrete group \(\Gamma\) of isometries of a CAT(-1) space, there always exists a conformal density supported in the limit set \(\Lambda_{\Gamma}\). For a proof, see \cite{BurgerMozes1996} and \cite{Bourdon1995}.

In \cite{Sullivan1979}, D. Sullivan constructed measures on the unit tangent bundle of the $n$-dimensional real hyperbolic space, and proved some striking results for this new class of measures. We will now briefly recall the definitions of analogous measures in CAT(-1) spaces. 
We follow \cite[Chapitre 1C]{Roblin2003} where the reader could find more details.

Let $SX$ be the set of isometries from $\RR$ to $(X,d)$ endowed with the topology of uniform convergence on compact subsets of $\RR$. In other words, $SX$ is the set of geodesics of $X$ parametrized by $\RR$. We have a canonical projection \(\gamma\mapsto \gamma(0)\) from $SX$ to $X$, playing the role of the projection from the unit tangent bundle of a manifold to the manifold. Notice that in the setting of CAT(-1) spaces this map may be non-surjective, as the geodesics need not be bi-infinite.

The trivial flow on $\RR$ induces a continuous flow $(\Phi^{t})_{t\in \mathbb{R}}$ on $SX$, called the \emph{geodesic flow} , given by \(\Phi^{t}(\gamma)(s)=\gamma(s+t)\). For $\gamma\in SX$, we will denote by \(\gamma^{\pm}\in\bd X\) the endpoints of \(\gamma\), i.e.\ the limits \(\lim_{t\to\pm\infty} \gamma(t)\). Let
\begin{equation}
\partial^{2}X = \partial X \times \partial X \setminus \left\{ (x,x) : x\in \partial X \right\}.
\end{equation}
If we fix a basepoint \(x\in X\), we may identify $SX$ with $\bd^{2}X\times \RR$ via
\begin{equation}
\gamma \mapsto (\gamma^{-},\gamma^{+},\beta_{\gamma^{-}}(\gamma(0),x)). \end{equation}
Under this identification, called  the \emph{Hopf parametrization}, the geodesic flow can be written as
\begin{equation}
\Phi^{t}(\xi,\eta,s) = (\xi,\eta,s+t),  
\end{equation}
and it commutes with the action of $\Gamma$ on $\bd^{2}X\times \RR$ by
\begin{equation}
g \cdot(\xi,\eta,s )=(g \xi,g  \eta, s+\beta_{\xi}(x,g^{-1} x) ).
\end{equation}
  
Using a $\Gamma$-invariant conformal density of dimension $\alpha$, we define the \emph{Bowen-Margulis-Sullivan measure} \(m\) on \(SX\), often referred to as the \emph{BMS measure}, by
\begin{equation}
dm(\xi,\eta,s)=\frac{d\mu_{x}(\xi) d\mu_{x}(\eta) ds}{d_{x}(\xi,\eta)^{2\alpha}}\cdot
\end{equation}
It is invariant under both the geodesic flow \(\Phi^{t}\), and the action of \(\Gamma\). The latter action admits a measurable fundamental domain, which can be identified with the quotient \(SX/\Gamma\). We denote by $m_{\Gamma}$ the measure on $SX/\Gamma$ corresponding to the restriction of \(m\) to the fundamental domain, and say that $\Gamma$ \emph{admits a finite BMS measure} if $m_{\Gamma}$ is finite. 

\subsection{Equidistribution and asymptotic Schur orthogonality}
\label{sec:equid-asympt-orth-1}

The \emph{translation length} of an element $g \in \Gamma$ is defined as
\begin{equation}
t(g):=\inf_{x\in X} d(x,gx),
\end{equation}
and the \emph{spectrum} of $\Gamma$ is the subgroup of $\RR$ generated by $t(g)$ where $g$ ranges over the hyperbolic isometries in $\Gamma$. If it is a discrete subgroup, we say that $\Gamma$ has  \emph{arithmetic spectrum}. We are interested in discrete groups with a non-arithmetic spectrum, because they guarantee mixing properties of the induced geodesic flow on \(SX/\Gamma\). This condition is satisfied for isometry groups of Riemannian surfaces, hyperbolic spaces, and CAT(-1) spaces with a non-trivial connected component in their limit set. We refer to \cite{Dalbo1999} and to \cite[Proposition 1.6, Chapitre 1]{Roblin2003} for more details.
The proof of mixing can be found in \cite[Proposition 7.7]{Babillot2002} in the case of negatively curved manifolds, and in \cite[Chapitre 3]{Roblin2003} for general CAT(-1) spaces.  

Denote by $\delta_{x}$ the unit Dirac mass centered at $x$. Using the mixing property of the induced geodesic flow on \((SX/\Gamma,m_{\Gamma})\), in \cite[Th\'eor\`eme 4.1.1, Chapitre 4]{Roblin2003}, Roblin proved the following result.

\begin{theorem}[T. Roblin]\label{roblin}
  Let $\Gamma$ be a discrete group of isometries of a CAT(-1) space $X$ with a non-arithmetic spectrum. Assume that $\Gamma$ admits a finite BMS measure associated to a $\Gamma$-invariant conformal density $\mu$ of dimension $\alpha=\alpha(\Gamma)$. Then for all $x,y \in X$ we have
  \begin{equation}
    \lim_{n\to\infty} \alpha e^{-\alpha n} \lVert m_{\Gamma}\rVert \sum_{ \substack{g\in\Gamma \\ d(x,gy)<n}}\delta_{g^{-1}  x} \otimes \delta_{g  y} = \mu_{x} \otimes \mu_{y}
  \end{equation}
  in the weak* topology of $C(\overline{X} \times \overline{X})^{*}$.
\end{theorem}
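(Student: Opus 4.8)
The plan is to deduce the statement from the mixing of the geodesic flow $(\Phi^{t})$ on the quotient $(SX/\Gamma, m_{\Gamma})$, which is exactly where the hypothesis of non-arithmetic spectrum enters. Indeed, by the Babillot--Roblin mixing theorem (see \cite[Proposition 7.7]{Babillot2002} and \cite[Chapitre 3]{Roblin2003}), non-arithmeticity of the length spectrum together with finiteness of $m_{\Gamma}$ guarantees that $(\Phi^{t})$ is mixing with respect to $m_{\Gamma}$, i.e.
\begin{equation*}
  \lim_{t\to\infty} \int_{SX/\Gamma} (F\circ\Phi^{-t})\, G \, dm_{\Gamma} = \frac{1}{\norm{m_{\Gamma}}}\Big(\int F\, dm_{\Gamma}\Big)\Big(\int G\, dm_{\Gamma}\Big)
\end{equation*}
for $F, G \in C_{c}(SX/\Gamma)$. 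Since weak-* convergence in $C(\ov X \times \ov X)^{*}$ can be tested against product functions $\phi\otimes\psi$, and since both sides of the claimed identity accumulate on $\bd X\times\bd X$, it suffices to evaluate the normalized counting measure against bump functions concentrated near prescribed pairs of boundary points.

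Second, I would set up the dictionary between orbit counting and the flow via the Hopf parametrization $SX\isom\bd^{2}X\times\RR$. The condition $d(x,gy)<n$ says that the orbit point $gy$ lies in the ball of radius $n$ about $x$; geometrically, the geodesic from $x$ to $gy$ points in a direction $\eta_{g}\in\bd X$ approximating $gy$ as seen from $x$, while $g^{-1}x$ approximates the reversed endpoint $\xi_{g}$. I would fix small flow boxes near the basepoint $x$, with Hopf coordinates $U^{-}\times U^{+}\times I$ for $U^{\pm}\subseteq\bd X$ neighborhoods and $I$ a short time interval, so that the BMS measure factorizes as
\begin{equation*}
  m(U^{-}\times U^{+}\times I) = \abs{I}\int_{U^{-}}\int_{U^{+}} \frac{d\mu_{x}(\xi)\,d\mu_{x}(\eta)}{d_{x}(\xi,\eta)^{2\alpha}}.
\end{equation*}
Then the number of $g$ with $d(x,gy)<n$ and $(g^{-1}x,gy)$ in the prescribed directions is, up to a controlled boundary error, the number of times the flowed box $\Phi^{t}C$ meets a $\Gamma$-translate $gC'$ for $t$ running up to $n$, which downstairs in $SX/\Gamma$ is governed by the correlation $m_{\Gamma}(\Phi^{-t}A\cap B)$.

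Third, I would integrate the mixing estimate over the time parameter. The exponential volume growth $\card{\set{g : d(x,gy)<n}}\asymp e^{\alpha n}$ forces the relevant contributions to concentrate near $t=n$, and summing the mixing asymptotic over unit time windows produces precisely the normalization $\alpha e^{-\alpha n}\norm{m_{\Gamma}}$: the $e^{-\alpha n}$ cancels the growth, $\norm{m_{\Gamma}}$ is the mixing constant, and $\alpha$ arises from differentiating $e^{\alpha t}$, i.e.\ from passing from the ball to the sphere of radius $n$. The product structure of the two flow-box factors $U^{-},U^{+}$ then yields $\mu_{x}\otimes\mu_{y}$ in the limit, after transporting the local densities $\mu_{g^{-1}x},\mu_{gy}$ to the basepoint via the conformal density relation. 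As a sanity check on the constant, Roblin's orbit-counting asymptotic $\card{\set{g : d(x,gy)<n}}\sim \frac{\norm{\mu_{x}}\norm{\mu_{y}}}{\alpha\norm{m_{\Gamma}}}e^{\alpha n}$ makes the total mass of the left-hand side converge to $\norm{\mu_{x}}\norm{\mu_{y}}$, matching the total mass of $\mu_{x}\otimes\mu_{y}$.

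The main obstacle is the passage from the continuous flow-box measures to the discrete orbit count: one must show that the shadow of an orbit point, namely the set of flow directions leading to it within the error tolerance, has BMS measure asymptotically proportional to the local conformal density, uniformly enough to sum, and that overlaps together with the thin set of geodesics failing to return to the fundamental domain contribute negligibly. In the general CAT(-1) setting (as opposed to negatively curved manifolds) this demands care with the regularity of the Busemann cocycle and the conformal density, and it crucially uses the finiteness of $m_{\Gamma}$; the non-arithmeticity hypothesis is exactly what upgrades ergodicity to the genuine mixing needed to control $\Phi^{-t}A\cap B$ as $t\to\infty$, rather than merely in Cesàro average. This is Roblin's theorem, and the argument sketched above is the substance of \cite[Th\'eor\`eme 4.1.1, Chapitre 4]{Roblin2003}.
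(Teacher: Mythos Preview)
The paper does not prove this statement at all: it is quoted as Roblin's theorem with a direct citation to \cite[Th\'eor\`eme 4.1.1, Chapitre 4]{Roblin2003}, and no argument is given beyond the sentence ``Using the mixing property of the induced geodesic flow on \((SX/\Gamma,m_{\Gamma})\) \ldots\ Roblin proved the following result.'' Your proposal likewise ultimately defers to the same reference, so in that sense you and the paper agree; the difference is that you supply an outline of Roblin's actual strategy (mixing of the geodesic flow via Babillot--Roblin, flow boxes in Hopf coordinates, integrating the mixing estimate in time to recover the normalization, and the shadow-lemma type control on overlaps), whereas the paper treats the result as a black box. Your sketch is faithful to Roblin's argument and correctly identifies where each hypothesis is used.
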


 Let $A_{R}(x,y)=\{  g \in \Gamma | R- h\leq d(x,g y) \leq R+h\}$ for some sufficiently large $h>0$. We now have the following.
 
\begin{corollary}[T. Roblin]\label{coro:roblin-equi}
  Under the assumptions of Theorem~\ref{roblin}, for all $x,y \in X$ we have
  \begin{equation}
    \lim_{R\to\infty}\frac{1}{\card{A_{R}(x,y)}}\sum_{ g \in A_{R}(x,y) }\delta_{g^{-1}x} \otimes \delta_{gy} = \mu_{x} \otimes \mu_{y} 
  \end{equation}
  in  the weak* topology on $C(\overline{X} \times \overline{X})^{*}$.
\end{corollary}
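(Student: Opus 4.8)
The plan is to deduce the annulus equidistribution directly from Roblin's ball equidistribution (Theorem~\ref{roblin}) by writing the sum over the annulus $A_{R}(x,y)$ as a difference of two sums over balls, and then dividing by the total mass. For $t>0$ set
\[
  S_{t} = \sum_{\substack{g\in\Gamma\\ d(x,gy)<t}} \delta_{g^{-1}x}\otimes\delta_{gy}\in C(\ov X\times\ov X)^{*},
\]
so that Theorem~\ref{roblin} reads $\alpha e^{-\alpha t}\norm{m_{\Gamma}} S_{t}\to \mu_{x}\otimes\mu_{y}$ in the weak-$*$ topology. Since $X$ is proper, $\ov X$ is compact and $\one\in C(\ov X\times\ov X)$; testing Roblin's limit against $\one$ yields the counting asymptotics $\alpha e^{-\alpha t}\norm{m_{\Gamma}}\card{\{g:d(x,gy)<t\}}\to\norm{\mu_{x}}\norm{\mu_{y}}$, which I will use to control the denominator.

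Next I would relate the annulus sum $T_{R}=\sum_{g\in A_{R}(x,y)}\delta_{g^{-1}x}\otimes\delta_{gy}$ to the $S_{t}$. Morally $T_{R}=S_{R+h}-S_{R-h}$, but to avoid assuming that $g\mapsto d(x,gy)$ misses the exact values $R\pm h$ (its level sets need not be controlled), I would sandwich $T_{R}$ between ball differences of slightly perturbed radii. For every $\epsilon>0$ the set-inclusions $\{R-h+\epsilon\le d<R+h-\epsilon\}\subseteq A_{R}(x,y)\subseteq\{R-h-\epsilon\le d<R+h+\epsilon\}$ give, for the positive measures $S_{t}$,
\[
  S_{R+h-\epsilon}-S_{R-h+\epsilon}\ \leq\ T_{R}\ \leq\ S_{R+h+\epsilon}-S_{R-h-\epsilon}.
\]
Multiplying by $\alpha e^{-\alpha R}\norm{m_{\Gamma}}$ and applying Roblin's theorem to each of the four ball sums, the prefactor $e^{-\alpha R}$ combined with $e^{\alpha(R\pm h\pm\epsilon)}$ turns the lower and upper bounds, tested against any non-negative $F\in C(\ov X\times\ov X)$, into $(e^{\alpha(h-\epsilon)}-e^{-\alpha(h-\epsilon)})(\mu_{x}\otimes\mu_{y})(F)$ and $(e^{\alpha(h+\epsilon)}-e^{-\alpha(h+\epsilon)})(\mu_{x}\otimes\mu_{y})(F)$ respectively. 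Letting $\epsilon\to0$ squeezes both to the same value, and writing a general $F$ as $\max(F,0)-\max(-F,0)$ extends the conclusion to all of $C(\ov X\times\ov X)$, giving $\alpha e^{-\alpha R}\norm{m_{\Gamma}}\,T_{R}\to(e^{\alpha h}-e^{-\alpha h})\,\mu_{x}\otimes\mu_{y}$ weak-$*$.

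Finally, the identical computation with $F=\one$ gives $\alpha e^{-\alpha R}\norm{m_{\Gamma}}\card{A_{R}(x,y)}\to(e^{\alpha h}-e^{-\alpha h})\norm{\mu_{x}}\norm{\mu_{y}}$. Dividing the two limits, the exponential prefactor $\alpha e^{-\alpha R}\norm{m_{\Gamma}}$ and the constant $e^{\alpha h}-e^{-\alpha h}$ both cancel, leaving
\[
  \lim_{R\to\infty}\frac{1}{\card{A_{R}(x,y)}}\sum_{g\in A_{R}(x,y)}\delta_{g^{-1}x}\otimes\delta_{gy}=\frac{\mu_{x}\otimes\mu_{y}}{\norm{\mu_{x}}\norm{\mu_{y}}},
\]
which is precisely the asserted limit once the conformal density is normalized, as it is in the Patterson-Sullivan setting where $\mu_{PS}$ is a probability measure.

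The argument is essentially routine; no single step is a serious obstacle. The only point demanding genuine care is the passage from the closed annulus to a difference of open balls in the middle step, since I cannot assume that the displacement function $g\mapsto d(x,gy)$ avoids the boundary radii $R\pm h$. The $\epsilon$-sandwich disposes of this cleanly, relying only on the monotonicity of $t\mapsto S_{t}$ and the uniform exponential growth rate $\alpha$ furnished by Roblin's theorem, so that the perturbation of the radii affects the limit only through the harmless factor $e^{\pm\alpha\epsilon}$.
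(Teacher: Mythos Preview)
The paper does not supply its own proof of this corollary; it is stated and attributed to Roblin, as a direct consequence of Theorem~\ref{roblin}. Your derivation---writing the annulus sum as a difference of ball sums, applying Roblin's theorem to each, and dividing by the total mass---is exactly the standard way one extracts such an annulus statement from a ball statement, and it is correct. The $\epsilon$-sandwich to handle the closed-versus-open endpoint issue is the right level of care.

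One small remark: you correctly end up with the \emph{normalized} measure $(\mu_{x}\otimes\mu_{y})/(\norm{\mu_{x}}\norm{\mu_{y}})$ on the right, and you note that this matches the stated corollary only under a normalization convention. That observation is accurate---the left-hand side is a probability measure for every $R$, so the corollary as written implicitly assumes $\norm{\mu_{x}}\norm{\mu_{y}}=1$. In the paper's actual application (the convex-cocompact case with $x=y=o$ and $\mu_{o}$ in the Patterson--Sullivan class, which was normalized in Section~\ref{sec:patt-sull-meas}) this is indeed the case, so nothing is lost. But you are right that for a general conformal density the normalization factor belongs in the statement.
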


Now, suppose again that \(\Gamma\) is convex cocompact, and thus hyperbolic. Fix a basepoint \(o\in X\) and denote \(d_{\Gamma}(g,h) = d(go,ho)\). If the orbit map \(g\mapsto go\) is not injective, this is only a pseudometric, which can be remedied by applying a bounded perturbation
\begin{equation}
  d_{\Gamma}^{+}(g,h) =
  \begin{cases}
    0 &\text{if \(g=h\),}\\
    d_{\Gamma}(g,h) + 1 &\text{otherwise,}
  \end{cases}
\end{equation}
yielding a roughly isometric metric \(d_{\Gamma}^{+}\in\mathcal{D}(\Gamma)\). Recall that we may identify \(\bd\Gamma\) with \(\Lambda_{\Gamma}\), and then the measure \(\mu_{o}\) of the \(\Gamma\)-invariant conformal density is then in the Patterson-Sullivan class corresponding to \(d_{\Gamma}^{+}\). If \(A_{R,h}\) is the annulus corresponding to \(d_{\Gamma}^{+}\), then by Corollary \ref{coro:roblin-equi}, for sufficiently large \(h>0\) we have
\begin{equation}
  \lim_{R\to\infty}\frac{1}{\card{A_{R,h}}}\sum_{ g \in A_{R,h} }\delta_{go} \otimes \delta_{g^{-1}o} = \mu_{o} \otimes \mu_{o},
\end{equation}
and since the orbit map \(g\mapsto go\) extends to a continuous map \(\ov\Gamma\to \ov X\) identifying \(\bd\Gamma\) with \(\Lambda_{\Gamma}\), we also have
\begin{equation}
  \lim_{R\to\infty}\frac{1}{\card{A_{R,h}}}\sum_{ g \in A_{R,h} }\delta_{g} \otimes \delta_{g^{-1}} = \mu_{o} \otimes \mu_{o},
\end{equation}
obtaining a family of measures on \(\Gamma\) yielding a sharper version of the Equidistribution Theorem~\ref{prop:equidistribution}.

From this, we deduce the asymptotic Schur orthogonality relations in CAT(-1) spaces. Let \(\pi\) be the boundary representation associated to the metric \(d_{\Gamma}^{+}\), and recall that
\begin{equation}
  \tilde{\pi}(g) = \frac{\pi(g)}{\Xi(g)}
\end{equation}
is the normalization with the Harish-Chandra function.

\begin{theorem}[Asymptotic Schur Orthogonality] \label{prop:asymptotic-orthogonality-conv}
  Assume that $\Gamma$ is a convex cocompact group of isometries of a CAT(-1) space with a non-arithmetic spectrum. Then for sufficiently large \(h>0\), all continuous functions \(f_{1},f_{2} \in C(\ov{X})\), and vectors \(v_{1}, v_{2}, w_{1}, w_{2}\in L^{2}(\bd X,\mu_{o})\)
  we have
  \begin{multline}
    \lim_{R\to\infty} \frac{1}{\card{A_{R,h}}} \sum _{g\in A_{R,h} }f_{1}(g o)
    f_{2}(g^{-1} o) \langle \tilde{\pi}(g)v_{1}, w_{1}\rangle
    \conj{\langle \tilde{\pi}(g)v_{2}, w_{2}\rangle} = \\ 
    = \langle f_{2}|_{\bd X} v_{1},v_{2}\rangle
    \conj{\langle w_{1},f_{1}|_{\bd X} w_{2}\rangle}.
  \end{multline}
\end{theorem}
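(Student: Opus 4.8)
The plan is to mirror exactly the structure used for the general hyperbolic case (Theorem~\ref{prop:asymptotic-orthogonality-general}), replacing the weak equidistribution of Theorem~\ref{prop:equidistribution} by the sharper, \emph{uniformly weighted} equidistribution furnished by Roblin's Corollary~\ref{coro:roblin-equi}. The whole point of the CAT($-1$) refinement is that the measures may be taken to be $\mu_{R}(g) = 1/\card{A_{R,h}}$, so the first thing to verify is that this family still satisfies the three conditions of Theorem~\ref{prop:equidistribution}: condition~(2) holds by definition of $A_{R,h}$, condition~(3) holds because $\card{A_{R,h}}\asymp\omega^{R}$, and condition~(1) is precisely the content of the displayed equidistribution
\begin{equation*}
  \lim_{R\to\infty}\frac{1}{\card{A_{R,h}}}\sum_{g\in A_{R,h}}\delta_{g}\otimes\delta_{g^{-1}} = \mu_{o}\otimes\mu_{o}
\end{equation*}
deduced from Corollary~\ref{coro:roblin-equi} in the paragraph preceding the theorem. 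Once this is checked, one observes that all the analytic input used in the general proof---the Harish-Chandra estimate $\Xi(g)\asymp\omega^{-\abs{g}/2}(1+\abs{g})$, the annular property~RD of Proposition~\ref{prop:annular-RD-matrix-coeffs}, and the Lipschitz approximation $\abs{\langle\tilde\pi(g)v,w\rangle - v(\check g)\conj{w(\hat g)}}\lasymp_{v,w}(1+\abs{g})^{-1/D}$ from \cite[Lemma~5.3]{Garncarek2014}---is available verbatim in the convex cocompact setting, since $d_{\Gamma}^{+}\in\mathcal{D}(\Gamma)$ and $\mu_{o}$ lies in the Patterson-Sullivan class for $d_{\Gamma}^{+}$.

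Concretely, I would invoke Theorem~\ref{prop:asymptotic-orthogonality-general} as a black box applied to the specific family $\mu_{R}(g)=1/\card{A_{R,h}}$. That theorem takes $f_{1},f_{2}\in C(\ov\Gamma)$, whereas here the functions live on $\ov X$ and are evaluated at $go$ and $g^{-1}o$. The bridge is the orbit map $g\mapsto go$, which extends to a continuous map $\ov\Gamma\to\ov X$ carrying $\bd\Gamma$ homeomorphically onto $\Lambda_{\Gamma}=\bd X$ (restricted to the limit set); precomposing $f_{1},f_{2}\in C(\ov X)$ with this extension yields functions $\tilde f_{1},\tilde f_{2}\in C(\ov\Gamma)$ with $\tilde f_{i}(g)=f_{i}(go)$ and $\tilde f_{i}(g^{-1})=f_{i}(g^{-1}o)$, and with boundary restrictions $\tilde f_{i}|_{\bd\Gamma}$ corresponding to $f_{i}|_{\bd X}$ under the identification $\bd\Gamma\cong\Lambda_{\Gamma}$. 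Feeding $\tilde f_{1},\tilde f_{2}$ into the general theorem produces exactly the claimed limit, with the right-hand side $\langle f_{2}|_{\bd X}v_{1},v_{2}\rangle\conj{\langle w_{1},f_{1}|_{\bd X}w_{2}\rangle}$.

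Thus the proof is essentially a reduction: the sharper equidistribution lets one specialize the weights to be genuinely uniform, and then the general machinery applies unchanged. The \emph{main obstacle}, and the only genuinely new step, is confirming that Roblin's equidistribution (stated for the conformal density $\mu_{o}$ on $\Lambda_{\Gamma}$ and the metric $d=d(go,ho)$ on orbits) does deliver condition~(1) of Theorem~\ref{prop:equidistribution} for the \emph{combinatorial} annuli of $d_{\Gamma}^{+}$: one must reconcile the bounded discrepancy between $d(go,h o)$ and $d_{\Gamma}^{+}(g,h)$, ensure that $\mu_{o}$ indeed coincides with the normalized Patterson-Sullivan measure $\mu_{PS}$ of $d_{\Gamma}^{+}$ up to the quasi-conformal class, and check that the hypotheses of Roblin's theorem (finite BMS measure, conformal density of dimension $\alpha=\alpha(\Gamma)$) are met---here the convex cocompactness guarantees a finite BMS measure and a conformal density supported on $\Lambda_{\Gamma}$, while the non-arithmetic spectrum supplies the mixing needed for Theorem~\ref{roblin}. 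All of this is already assembled in the paragraph introducing the theorem, so the remaining argument is the short functorial reduction described above.
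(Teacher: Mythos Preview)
Your proposal is correct and follows exactly the route the paper takes: the paper does not give a separate proof of this theorem but simply notes, in the paragraph immediately preceding the statement, that the uniform measures $\mu_{R}(g)=1/\card{A_{R,h}}$ satisfy conditions (1)--(3) of Theorem~\ref{prop:equidistribution} (with (1) coming from Corollary~\ref{coro:roblin-equi} via the orbit map $\ov\Gamma\to\ov X$), and then invokes Theorem~\ref{prop:asymptotic-orthogonality-general}. Your write-up makes explicit the passage from $C(\ov X)$ to $C(\ov\Gamma)$ through pullback along the extended orbit map, which the paper leaves implicit, but the strategy is identical.
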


Note that this theorem implies Theorem A of \cite{Boyer2014}, an ergodic theorem \`a la Bader-Muchnik.

\begin{remark}
We may wonder whether this convergence holds for non-uniform lattices in $SL(2,\mathbb{R})$ (or more generally in rank one semisimple Lie groups) for boundary representations associated to the Lebesgue measure on the boundary. It turns out that, since the Lebesgue measure satisfies Ahlfors regularity (with respect to the visual metric associated to the Riemannian metric of the symmetric space) the convergence holds for Lipschitz functions on the boundary. But it is important to note that the convergence does not extend to $L^{2}$ functions because it requires property RD with respect to the Riemannian metric. 
\end{remark}

\begin{remark}
  Let \(\Gamma\) be a non-abelian free group acting on its Cayley
  graph \(X\) corresponding to the standard generators. Although in
  this case the spectrum is arithmetic, by \cite{BoyerLobos2016} we still have an
  equiuistribution theorem, leading to asymptotic orthogonality
  relations
  \begin{multline}
    \lim_{n\to\infty} \frac{1}{\card{S_{n}}} \sum_{g\in S_{n}} f_{1}(g)f_{2}(g^{-1}) \langle \tilde{\pi}(g)v_{1}, w_{1}\rangle
    \conj{\langle \tilde{\pi}(g)v_{2}, w_{2}\rangle}=\\= \langle f_{2}|_{\bd \Gamma} v_{1},v_{2}\rangle
    \conj{\langle w_{1},f_{1}|_{\bd \Gamma} w_{2}\rangle},
  \end{multline}
  where \(f_{1},f_{2}\in C(\Gamma \cup \bd\Gamma)\), and \(S_{n}=\{ g\in \Gamma : \abs{g}=n\}\).
\end{remark}

\section{Simply connected manifolds and Gibbs measures}\label{sec:simply-conn-manif}

\subsection{The Gibbs stream}
\label{sec:gibbs-stream}

We briefly recall the theory of Gibbs measures in negative curvature, following \cite{Paulin2015} where the reader could consult for more details. Let $X$ be a complete simply connected Riemannian manifold of dimension at least $2$, and pinched sectional curvature $-b^{2}\leq K\leq -1$ with $b\geq 1$, equipped with its Riemannian distance denoted by $d$, and acted upon by a non-elementary discrete group of isometries \(\Gamma\). Such manifolds form a particular class of CAT(-1) spaces in which the Patterson-Sullivan measures can be further generalized. We will assume that \(X=\tilde{M}\) is the universal cover of a manifold \(M\) with fundamental group \(\Gamma\), and denote \(q\colon X\to M\) the covering map.

Let $p\colon T^{1}X\rightarrow X$ be the unit tangent bundle of $X$, and for \(v\in T^{1}X\) denote by \(\gamma_{v}\) the unique geodesic in \(X\) with \(\gamma_{v}'(0)=v\). We may equip \(T^{1}X\) with the following metric
\begin{equation}
  d_{T^{1}X}(v,w) = \frac{1}{\sqrt{\pi}} \int_{-\infty}^{\infty} d( \gamma_{v}(t),\gamma_{w}(t)) e^{-t^{2}/2}\,dt.
\end{equation}  
Now, for a H\"older-continuous map  $F \colon T^{1}M \rightarrow \mathbb{R}$, called a \emph{potential}, let $\widetilde{F} = F \circ q$ be its lift to a $\Gamma$-invariant potential on $T^{1}X$. We will require that \emph{$\widetilde{F}$ is symmetric}, i.e.\ invariant under the antipodal map \(v\mapsto -v\).
 
For all  $x,y \in X$,  define
\begin{equation}
  d^{F}(x,y)= 
  \int_{0}^{d(x,y)} \widetilde{F}(\gamma'(t))\,dt 
\end{equation}
where \(\gamma\) is the geodesic segment from $x$ to $y$. A priori,
$d^{F}$ is not non-negative and is far from being a metric,
nevertheless the symmetry of $\widetilde{F}$ implies that
\begin{equation} \label{sym}
d^{F}(x,y)=d^{F}(y,x).
\end{equation}
Now, for \(\xi\in\bd X\) define the Gibbs cocycle as
\begin{equation}\label{gibbs}
C^{F}_{\xi}(x,y)=
\lim_{t\rightarrow +\infty} d^{F}(y,\xi_{t})-d^{F}(x,\xi_{t}),
\end{equation}
where $\xi_{t}$ is any geodesic ray representing $\xi$.
Observe that if $\widetilde{F}=-1$ the Gibbs cocycle is nothing but the horospherical distance \(\beta_{\xi}(x,y)\).

Let \(\sigma\in\RR\). A family of positive finite measures \(\{ \mu_{x}^{F}: x\in X\}\) on \(\ov X\) is a \emph{Gibbs stream of dimension $\sigma$ for $(\Gamma,F)$} if 
\begin{enumerate}
\item For all $x$ and $y$ in $X$, $\mu_{x}^F$ and $\mu^{F}_{y}$ are
  equivalent, and 
  \begin{equation}
    \frac{ d\mu^{F}_{y}}{d\mu^{F}_{x}}(\xi)=e^{C^{F-\sigma}_{\xi}(x,y)},
  \end{equation}

\item For all $g \in \Gamma$, and $x \in X$ we have
  $g_{*}\mu^{F}_{x}=\mu^{F}_{g x}$.
\end{enumerate}
In this context, the critical exponent of $(\Gamma,F)$ is defined as
\begin{equation}
\sigma_{\Gamma,F}=\limsup_{n\to\infty} \frac{1}{n}\sum_{g\in S_{n,h}}e^{d^{F}(x,g x)},
\end{equation}
where \(S_{n,h}=\{ g\in\Gamma : n-h\leq d(g x,x)\leq n \}\), and \(h>0\) is sufficiently large.

\begin{proposition}(S-J. Patterson, \cite{Patterson1976}) If $\sigma_{\Gamma,F}<\infty$,
  then there exists at least one Gibbs stream of dimension
  $\sigma_{\Gamma,F}$ with support exactly equal to
  $\Lambda_{\Gamma}$.
\end{proposition}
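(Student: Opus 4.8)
The plan is to carry out Patterson's original construction of conformal densities, adapted to the Gibbs cocycle. Fix a reference point $o\in X$ and consider the weighted Poincaré series
\begin{equation}
  Q_{o}(s) = \sum_{g\in\Gamma} e^{d^{F}(o,go) - s\,d(o,go)},
\end{equation}
whose abscissa of convergence is precisely $\sigma_{\Gamma,F}$: grouping the terms according to the annuli $S_{n,h}$ and comparing with $\sum_{n} e^{(\sigma_{\Gamma,F}-s)n}$ shows that $Q_{o}(s)<\infty$ for $s>\sigma_{\Gamma,F}$ and $Q_{o}(s)=\infty$ for $s<\sigma_{\Gamma,F}$; the cocycle relation together with the triangle inequality make this independent of $o$. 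The idea is to place at each orbit point $go$ an atom of mass proportional to $e^{d^{F}(o,go)-s\,d(o,go)}$, normalise, and let $s$ decrease to $\sigma_{\Gamma,F}$. Since for $s$ close to $\sigma_{\Gamma,F}$ the mass accumulates ever further out in $X$, every weak-* limit of the normalised measures on the compact space $\ov X$ is supported on $\bd X$.

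The first technical point is Patterson's trick, needed when $Q_{o}$ happens to converge at $s=\sigma_{\Gamma,F}$: one introduces a non-decreasing slowly varying weight $\theta\colon[0,\infty)\to(0,\infty)$, with $\theta(t+c)/\theta(t)\to 1$ for every $c$, chosen so that the modified series $\sum_{g}\theta(d(o,go))\,e^{d^{F}(o,go)-s\,d(o,go)}$ still has abscissa $\sigma_{\Gamma,F}$ but now diverges there. Writing $d^{F-s}(x,y)=d^{F}(x,y)-s\,d(x,y)$, I would then set, for the whole family of basepoints simultaneously,
\begin{equation}
  \mu_{x,s} = \frac{1}{\widetilde{Q}_{o}(s)}\sum_{g\in\Gamma}\theta(d(o,go))\,e^{d^{F-s}(x,go)}\,\delta_{go},\qquad \widetilde{Q}_{o}(s)=\sum_{g\in\Gamma}\theta(d(o,go))\,e^{d^{F-s}(o,go)},
\end{equation}
so that the $\theta$-weight and the normalisation depend only on $o$, while the free basepoint $x$ enters solely through the exponential factor. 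The Hölder cocycle estimates keep the total masses $\mu_{x,s}(\ov X)$ bounded above and away from $0$, so by compactness I can extract limits $\mu^{F}_{x}$ along a single sequence $s_{n}\downarrow\sigma_{\Gamma,F}$, for all $x$ at once.

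It remains to verify the two defining properties. Equivariance $g_{*}\mu^{F}_{x}=\mu^{F}_{gx}$ follows by re-indexing the defining sum and using the $\Gamma$-invariance of $\widetilde{F}$; the slight mismatch in the $\theta$-weight, based on $o$ rather than $go$, is absorbed in the limit by slow variation. For conformality, the choice to normalise every $\mu_{x,s}$ by the same $\widetilde{Q}_{o}(s)$ with the same $\theta$-weight is exactly what makes the computation transparent: $\mu_{x,s}$ and $\mu_{y,s}$ carry atoms at the same points $go$, and the ratio of their masses there is simply $e^{d^{F-s}(y,go)-d^{F-s}(x,go)}$. As $go\to\xi\in\Lambda_{\Gamma}$ this exponent converges to the Gibbs cocycle $C^{F-s}_{\xi}(x,y)$ of \eqref{gibbs}, and passing to the weak-* limit and then letting $s\to\sigma_{\Gamma,F}$ yields the desired Radon--Nikodym derivative
\begin{equation}
  \frac{d\mu^{F}_{y}}{d\mu^{F}_{x}}(\xi) = e^{C^{F-\sigma}_{\xi}(x,y)}.
\end{equation}
The main obstacle is making this last step uniform: one has to show that $d^{F-s}(y,go)-d^{F-s}(x,go)$ converges to $C^{F-s}_{\xi}(x,y)$ uniformly as $go$ ranges over a shrinking boundary neighbourhood of $\xi$, with an error independent of $g$, and that this limit---defined in \eqref{gibbs} via geodesic rays---is the same along any sequence tending to $\xi$. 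This is where the Hölder regularity of $F$ must be combined with the hyperbolicity estimates, in a shadow-lemma type argument. Finally, $\mathrm{supp}\,\mu^{F}_{x}\subseteq\overline{\Gamma o}\cap\bd X=\Lambda_{\Gamma}$ because all atoms lie on the orbit $\Gamma o$; and it equals $\Lambda_{\Gamma}$ since $\mathrm{supp}\,\mu^{F}_{x}$ is a non-empty closed $\Gamma$-invariant subset of $\Lambda_{\Gamma}$, on which $\Gamma$ acts minimally, being non-elementary.
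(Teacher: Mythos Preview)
The paper does not include a proof of this proposition; it is stated with attribution to Patterson and a citation, as background for the Gibbs stream discussion. Your outline is the standard Patterson--Sullivan construction adapted to the Gibbs setting (carried out in detail in \cite{Paulin2015}), and the sketch is essentially correct: the weighted Poincar\'e series, Patterson's slow-variation trick to force divergence at the critical exponent, the uniform normalisation by $\widetilde{Q}_{o}(s)$ so that conformality drops out of the ratio of atoms, and the minimality argument for the support are all the right ingredients. The one step you flag as ``the main obstacle''---uniform convergence of $d^{F-s}(y,go)-d^{F-s}(x,go)$ to the Gibbs cocycle as $go\to\xi$---is indeed where the genuine work lies; it requires the H\"older continuity of $\widetilde{F}$ together with the CAT($-1$) fact that geodesics from $x$ and from $y$ to a distant point $go$ fellow-travel except on a bounded initial segment, so that the integrals of $\widetilde{F}$ along them differ only on that bounded piece, uniformly in $g$.
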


\subsection{Equidistribution and asymptotic Schur orthogonality}\label{Gibbs}

For all $x \in X$ and $\xi,\eta\in \bd X$ define the \emph{gap map} as
\begin{equation}
  D_{x,F}(\xi,\eta) = \exp \lim_{t\to\infty} \frac{ d^{F}(x,\xi_{t}) + d^{F}(x,\eta_{t}) - d^{F}(\xi_{t},\eta_{t})}{2} .
\end{equation}
When \(F=-1\), the gap map $D_{x,F}$ is just the visual metric $d_{x}$ from Section~\ref{CATspaces}. Also, note the $\Gamma$-invariance of \(D_{x,F}\), namely
\begin{equation}
  D_{g x, F}(g\xi,g\eta)=D_{x,F}(\xi,\eta)
\end{equation}
for all $g \in \Gamma$ and $\xi,\eta\in \bd X$.

Let $\mu^{F}$ be a Gibbs stream for $(\Gamma,F)$ of dimension $\sigma=\sigma_{(\Gamma,F)}$. Once we have fixed a base point $x\in X$ and used the Hopf parametrization, define on \(T^{1}X\) \emph{the Gibbs measures  associated with $\mu^{F}$} by   
\begin{equation}\label{Gibbsmeasure}
dm(\xi,\eta,t)=\frac{d\mu^{F}_{x }(\eta)  d\mu^{F}_{x }(\xi) dt }{D_{x,F-\sigma}(\eta,\xi)^{2}}.
\end{equation}
Similarly as in Section~\ref{BMS}, this measure descends to a measure \(m^{F}\) on the quotient \(T^{1}X/\Gamma=T^{1}M\), called \emph{the Gibbs measure on \(T^{1}M\) associated with \(\mu^{F}\)}. It is invariant under the geodesic flow, and in the case of convex cocompact groups, always finite.

We are now ready to state the equidistribution theorem for the Gibbs stream due to Paulin, Policott and Shapira \cite{Paulin2015}.
 
\begin{theorem}[Paulin-Pollicott-Schapira]\label{theo:roblin-pps}
  Assume that $\sigma$ is finite and positive. If $m^{F}$ is finite and mixing under the geodesic flow on $T^{1}X$, then for all $x,y \in X$ and sufficiently large $h>0$
  \begin{equation}
    \lim_{n\to\infty} \frac{\sigma \lVert m^{F}\rVert e^{-\sigma n}}{1 - e^{-c\sigma}} \sum_{g\in S_{n,h} } e^{d^{F}(x,g y)}\delta_{g^{-1}  x} \otimes \delta_{g  y} = \mu^{F}_{x} \otimes \mu^{F}_{y}
\end{equation}
in the weak* topology of $C(\overline{X} \times \overline{X})^{*}$.
\end{theorem}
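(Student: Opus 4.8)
The plan is to deduce this from the mixing of the geodesic flow on $(T^{1}X/\Gamma, m^{F})$, following Roblin's strategy in the proof of Theorem~\ref{roblin} and its adaptation to potentials carried out in \cite{Paulin2015}. The starting point is the mixing hypothesis: for any two compactly supported continuous functions $\psi_{1},\psi_{2}$ on $T^{1}X/\Gamma$ one has
\begin{equation*}
  \lim_{t\to\infty} \int_{T^{1}X/\Gamma} \psi_{1}\cdot(\psi_{2}\circ\Phi^{-t})\,dm^{F} = \frac{1}{\norm{m^{F}}}\int \psi_{1}\,dm^{F}\int \psi_{2}\,dm^{F}.
\end{equation*}
Lifting $\psi_{1},\psi_{2}$ to compactly supported functions on $T^{1}X$ and unfolding the integral over the $\Gamma$-action rewrites the left-hand side as a sum over $g\in\Gamma$ of correlations between $\psi_{1}$ and $g_{*}(\psi_{2}\circ\Phi^{-t})$, in which only those $g$ survive that carry the flowed support $\Phi^{t}(\mathrm{supp}\,\psi_{2})$ back near $\mathrm{supp}\,\psi_{1}$.

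The second step is the geometric identification of the surviving terms. Using the Hopf parametrization and the product form~\eqref{Gibbsmeasure} of the Gibbs measure, a computation in pinched negative curvature shows that a term is non-negligible precisely when $d(x,gy)\approx t$, and that the weight it carries is controlled by the Gibbs cocycle, yielding the factor $e^{d^{F}(x,gy)}$ once the test functions are allowed to concentrate near a pair of boundary points. The transverse (boundary) directions then reproduce the product of Gibbs densities $\mu^{F}_{x}\otimes\mu^{F}_{y}$, while integration along the flow direction supplies the one-dimensional time factor responsible for the power $e^{-\sigma t}$ and the constant $\sigma\norm{m^{F}}$; concentrating the test functions near points produces the Dirac masses $\delta_{g^{-1}x}\otimes\delta_{gy}$.

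To pass from the cumulative (ball) count implicit in the flow integral to the thin shells $S_{n,h}$, I would express the shell sum as the difference of two cumulative sums over $\{d\le n\}$ and $\{d\le n-h\}$. Since the Gibbs weight grows like $e^{\sigma t}$ along the flow, the leading asymptotics of the cumulative sums are proportional to $e^{\sigma n}$, and their difference over a window whose width is governed by $h$ carries a geometric factor of the form $1-e^{-c\sigma}$; dividing by it produces the stated constant $\sigma\norm{m^{F}}e^{-\sigma n}/(1-e^{-c\sigma})$. Finally, weak-* convergence of measures on $\ov{X}\times\ov{X}$ follows from the convergence against product test functions $\psi_{1}\otimes\psi_{2}$ by the usual density and approximation arguments, using the continuity of the gap map and of the conformal densities on the boundary.

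The main obstacle is the transition from the abstract mixing statement to the concrete boundary equidistribution: one must control the geometry of the flow boxes, show that the contributing group elements are exactly those with $d(x,gy)$ in the prescribed window up to errors that vanish in the limit, and handle the Gibbs weight $e^{d^{F}(x,gy)}$ uniformly. This rests on the shadow lemma and on the H\"older regularity of the cocycle $C^{F}_{\xi}$, together with a careful treatment of the boundary effect of the gap map $D_{x,F}$ in~\eqref{Gibbsmeasure}. When $\Gamma$ is not cocompact one must additionally rule out escape of mass to infinity, but for the convex cocompact groups relevant to our applications the finiteness of $m^{F}$ makes this step routine; the complete argument is given in \cite{Paulin2015}.
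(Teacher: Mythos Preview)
The paper does not prove this statement at all: Theorem~\ref{theo:roblin-pps} is quoted as a result of Paulin, Pollicott and Schapira, with the proof deferred entirely to \cite{Paulin2015}. So there is no ``paper's own proof'' to compare against; the authors treat it as a black box, just as Roblin's Theorem~\ref{roblin} is quoted without proof.

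Your proposal is therefore not in competition with anything in the paper. What you have written is a reasonable high-level outline of the Roblin-style argument as adapted to potentials in \cite{Paulin2015}: mixing on the quotient, unfolding to a sum over $\Gamma$, identifying the surviving terms via Hopf coordinates and the shadow lemma, and passing from balls to shells. As a sketch this is faithful to the actual strategy, but it is not a proof: the hard work---uniform control of the Gibbs cocycle on shadows, the precise flow-box computation that produces the constant, and the approximation arguments yielding weak-* convergence on $\ov X\times\ov X$---is all gestured at rather than carried out, and you yourself end by pointing to \cite{Paulin2015} for the complete argument. Since that is exactly what the paper does, your write-up is consistent with the paper's treatment, only more verbose.
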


In \cite{BoyerMayeda2016}, the irreducibility and classification of the quasi-regular representations associated with Gibbs streams are proved. In particular, the Harish-Chandra functions have been studied, and some useful estimates were obtained.

Pick a basepoint $x\in X$ and let $\mu=\mu^{F}_{x}$ be the measure in the Gibbs stream associated to $x$. Denote by \(\pi\) the corresponding quasi-regular representation of \(\Gamma\) on  \(L^{2}(X,\mu)\), and consider its Harish-Chandra function $\Xi(g)=\langle\pi(g)\one,\one\rangle$. Again, let \(\tilde{\pi}(g)=\pi(g)/\Xi(g)\). We have

\begin{proposition}\label{prop:harish-chandra-gibbs}
  Assume that $\Gamma$ is convex cocompact and that $F$ is symmetric. Then we have
  \begin{equation}
    \Xi(g)\asymp e^{d^{F-\sigma}(x,gx) /2} (1 + d(x,gx)) .
  \end{equation}
\end{proposition}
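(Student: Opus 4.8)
The plan is to compute the Harish-Chandra function as a boundary integral and then estimate that integral by a shadow decomposition, following the Patterson--Sullivan computation in \cite[Lemma 5.1]{Garncarek2014}, but with every metric quantity replaced by its $F$-weighted analogue. First I would unwind the definitions. Since $[\pi(g)\one](\xi) = (dg_{*}\mu/d\mu)^{1/2}(\xi)$ and, by the two defining properties of the Gibbs stream, $g_{*}\mu^{F}_{x} = \mu^{F}_{gx}$ with $d\mu^{F}_{gx}/d\mu^{F}_{x}(\xi) = e^{C^{F-\sigma}_{\xi}(x,gx)}$, one obtains
\begin{equation}
  \Xi(g) = \int_{\bd X} e^{C^{F-\sigma}_{\xi}(x,gx)/2}\,d\mu^{F}_{x}(\xi).
\end{equation}
Thus everything reduces to estimating this integral, and the shape of the target $\asymp e^{d^{F-\sigma}(x,gx)/2}(1+d(x,gx))$ already suggests that the mass is spread over $\asymp d(x,gx)$ comparable scales, each contributing a bounded amount.

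The key geometric step is a formula for the Gibbs cocycle in terms of an $F$-weighted Gromov product. Write $n = d(x,gx)$, let $\eta\in\bd X$ be the endpoint of a geodesic ray extending the segment $[x,gx]$, so that $gx = \eta_{n}$ on the ray $(\eta_{s})_{s\ge 0}$ with $\eta_{0}=x$, and set $u = \min\{(\eta,\xi)_{x},\,n\}$. Using that the geodesics $[x,\xi_{t})$ and $[x,\eta_{n})$ fellow-travel up to parameter $\approx u$ and then diverge, together with the symmetry of $\widetilde F$ (which gives $d^{F-\sigma}(\eta_{n},\eta_{u}) = d^{F-\sigma}(\eta_{u},\eta_{n})$), I would establish
\begin{equation}
  C^{F-\sigma}_{\xi}(x,gx) \approx d^{F-\sigma}(x,gx) - 2\,d^{F-\sigma}(x,\eta_{u}).
\end{equation}
This is the exact analogue of $\beta_{\xi}(x,gx) = 2(gx,\xi)_{x} - d(x,gx)$, to which it reduces when $\widetilde F-\sigma\equiv -1$. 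It follows that $e^{C^{F-\sigma}_{\xi}(x,gx)/2} \asymp e^{d^{F-\sigma}(x,gx)/2}\,e^{-d^{F-\sigma}(x,\eta_{u})}$, so that it only remains to prove
\begin{equation}
  \int_{\bd X} e^{-d^{F-\sigma}(x,\eta_{u(\xi)})}\,d\mu^{F}_{x}(\xi) \asymp 1 + n.
\end{equation}

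For this last estimate I would decompose $\bd X$ into the shadow $A_{\ge n} = \{\xi : (\eta,\xi)_{x}\ge n\}$ of $gx$ and the annuli $A_{k} = \{\xi : (\eta,\xi)_{x}\in[k,k+1)\}$ for $0\le k<n$. On $A_{k}$ the integrand is $\asymp e^{-d^{F-\sigma}(x,\eta_{k})}$, while the Gibbs (Sullivan) shadow lemma of \cite{Paulin2015} gives $\mu^{F}_{x}(A_{\ge k}) \asymp e^{d^{F-\sigma}(x,\eta_{k})}$; hence each scale contributes $\lasymp 1$, and summing over $k=0,\dots,n$ yields the upper bound $\lasymp 1+n$. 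The shadow term $A_{\ge n}$, where the integrand $e^{-d^{F-\sigma}(x,gx)}$ meets mass $\asymp e^{d^{F-\sigma}(x,gx)}$, likewise contributes $\asymp 1$. For the matching lower bound I would use that convex cocompactness makes the shadow estimate uniform, so the shadow masses $\mu^{F}_{x}(A_{\ge k})$ decay at a uniform exponential rate in $k$; grouping the annuli into blocks on which the mass drops by a fixed factor, each block carries mass $\asymp e^{d^{F-\sigma}(x,\eta_{k})}$ and therefore contributes $\asymp 1$, producing $\asymp n$ blocks and the bound $\rasymp 1+n$.

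The main obstacle is the cocycle formula of the second paragraph. Because $F$ is only H\"older, the fellow-traveling of geodesics, which is controlled only up to bounded error in the metric $d$, must be upgraded to bounded error in the $d^{F}$-cocycle. This is precisely where the H\"older regularity of $\widetilde F$ together with the pinched curvature of $X$ enters---nearby geodesic segments carry $F$-integrals differing by a uniformly bounded amount---and where the symmetry hypothesis on $F$ is genuinely used, through the identity $d^{F-\sigma}(\eta_{n},\eta_{u}) = d^{F-\sigma}(\eta_{u},\eta_{n})$ for the reversed segment. Convex cocompactness supplies the remaining uniformity needed throughout: finiteness of $m^{F}$, the identification $\bd\Gamma=\Lambda_{\Gamma}$, and the uniform form of the Gibbs shadow lemma required for the lower bound.
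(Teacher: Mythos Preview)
The paper does not actually prove this proposition: it is stated immediately after the sentence ``In \cite{BoyerMayeda2016}, \ldots\ the Harish-Chandra functions have been studied, and some useful estimates were obtained,'' and no argument is given. So there is no proof in the paper to compare yours against; the result is imported from \cite{BoyerMayeda2016}.

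That said, your outline is the natural one and is essentially the Gibbs-stream analogue of \cite[Lemma~5.1]{Garncarek2014}, which is presumably also how \cite{BoyerMayeda2016} proceeds. The reduction to $\Xi(g)=\int e^{C^{F-\sigma}_{\xi}(x,gx)/2}\,d\mu^{F}_{x}$ is correct, the cocycle identity $C^{F-\sigma}_{\xi}(x,gx)\approx d^{F-\sigma}(x,gx)-2d^{F-\sigma}(x,\eta_{u})$ is the right replacement for the Busemann formula, and the layered shadow estimate via the Gibbs shadow lemma (Mohsen's lemma in \cite{Paulin2015}) is the standard way to get the $(1+n)$ factor. You have also correctly isolated the one genuinely nontrivial point: turning metric fellow-travelling into bounded error for $d^{F}$-integrals requires the H\"older regularity of $\widetilde F$ together with the exponential convergence of geodesics in pinched negative curvature; this is exactly the content of \cite[Lemma~3.2]{Paulin2015}, and you should cite it rather than leave that step as a remark. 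The symmetry of $F$ is used where you say.

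One small caution on the lower bound: the block argument you sketch needs $\mu^{F}_{x}(A_{\ge k})\asymp e^{d^{F-\sigma}(x,\eta_{k})}$ to be \emph{strictly} decreasing in $k$ at a uniform rate, which is not automatic from the shadow lemma alone since $d^{F-\sigma}$ is not a priori monotone along geodesics. What one actually uses is that $\sigma>\sup\widetilde F$ under convex cocompactness (or, more directly, that the critical exponent of the original metric is positive), so that $d^{F-\sigma}(x,\eta_{k})$ decreases linearly in $k$; you should make this explicit.
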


In the proof of Theorem \ref{prop:asymptotic-orthogonality-general}, a crucial role is played by  \cite[Lemma 5.3]{Garncarek2014}, asserting that for \(v,w \in \Lip(\bd\Gamma,d_{\epsilon})\) we have
\begin{equation}
  \abs{ \langle \tilde{\pi}_{\mu_{PS}}(g)v,w\rangle - v(\check{g})\conj{w(\hat{g})} } \lasymp_{v,w} (1+\abs{g})^{-1/D}.
\end{equation}
In the context of Gibbs stream we have the following analogous result.
\begin{lemma}\label{lemma:estim-coeff}
  Assume that $\Gamma$ is convex cocompact and that $F$ is symmetric. Then for any $\epsilon>0$ there exists $C_{\epsilon}>0$ such that for all $v,w\in {\rm Lip}(\partial X, d_{x})$
  \begin{equation}
    \abs{ \langle \tilde{\pi}(g)v,w\rangle - v(\check{g})\conj{w(\hat{g})} } \lasymp_{v,w} \epsilon+\frac{C_{\epsilon}}{d(x,gx)}
  \end{equation}
\end{lemma}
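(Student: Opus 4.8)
The plan is to follow the strategy of \cite[Lemma 5.3]{Garncarek2014}, adapted to the Gibbs setting, with the weaker conclusion reflecting the fact that the Gibbs stream \(\mu=\mu^{F}_{x}\) is not assumed to be Ahlfors regular. Writing \(\tilde P_{g}(\xi)=\Xi(g)^{-1}e^{C^{F-\sigma}_{\xi}(x,gx)/2}\), so that \([\tilde\pi(g)v](\xi)=\tilde P_{g}(\xi)v(g^{-1}\xi)\) and \(\int\tilde P_{g}\,d\mu=\langle\tilde\pi(g)\one,\one\rangle=1\), I would first write \(v(\check g)\conj{w(\hat g)}=v(\check g)\conj{w(\hat g)}\int\tilde P_{g}\,d\mu\) and expand the integrand of the difference as \(v(g^{-1}\xi)\conj{w(\xi)}-v(\check g)\conj{w(\hat g)}=v(g^{-1}\xi)\bigl(\conj{w(\xi)}-\conj{w(\hat g)}\bigr)+\conj{w(\hat g)}\bigl(v(g^{-1}\xi)-v(\check g)\bigr)\). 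This bounds the left-hand side by \(\norm{v}_{\infty}I_{w}+\norm{w}_{\infty}I_{v}\), where \(I_{w}=\int\tilde P_{g}(\xi)\abs{w(\xi)-w(\hat g)}\,d\mu(\xi)\) and \(I_{v}=\int\tilde P_{g}(\xi)\abs{v(g^{-1}\xi)-v(\check g)}\,d\mu(\xi)\).

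The term \(I_{v}\) cannot be treated directly, since \(g^{-1}\) expands neighborhoods of \(\hat g\) and \(g^{-1}\hat g\neq\check g\) in general. Instead I would push the probability measure \(\tilde P_{g}\,\mu\) forward under \(g^{-1}\). Using the \(\Gamma\)-equivariance and the cocycle relation for the Gibbs cocycle one gets \(C^{F-\sigma}_{g\eta}(x,gx)=-C^{F-\sigma}_{\eta}(x,g^{-1}x)\), and combined with \(\frac{dg^{-1}_{*}\mu}{d\mu}(\eta)=e^{C^{F-\sigma}_{\eta}(x,g^{-1}x)}\) and the (automatic) identity \(\Xi(g)=\Xi(g^{-1})\), a change of variables \(\eta=g^{-1}\xi\) shows that \((g^{-1})_{*}(\tilde P_{g}\,\mu)=\tilde P_{g^{-1}}\,\mu\). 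Hence \(I_{v}=\int\tilde P_{g^{-1}}(\eta)\abs{v(\eta)-v(\check g)}\,d\mu(\eta)\), which has exactly the same shape as \(I_{w}\) with \(g,\hat g,w\) replaced by \(g^{-1},\check g=\widehat{g^{-1}},v\). It therefore suffices to prove, for a Lipschitz function \(u\) and its concentration point \(\hat g\), the core estimate \(\int\tilde P_{g}(\xi)\abs{u(\xi)-u(\hat g)}\,d\mu(\xi)\lasymp_{u}\epsilon+C_{\epsilon}/d(x,gx)\).

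For the core estimate I would split \(\bd X\) at a radius \(r=r(\epsilon)\) around \(\hat g\). On the near part \(\{d_{x}(\xi,\hat g)<r\}\) the Lipschitz bound \(\abs{u(\xi)-u(\hat g)}\leq\Lip(u)\,d_{x}(\xi,\hat g)<\Lip(u)\,r\) together with \(\int\tilde P_{g}\,d\mu=1\) gives a contribution \(\lasymp_{u}r\), the \(\epsilon\)-term once \(r\asymp\epsilon\). On the far part I would decompose \(\{d_{x}(\xi,\hat g)\geq r\}\) into the annuli \(A_{s}=\{e^{-s-1}\leq d_{x}(\xi,\hat g)<e^{-s}\}\) for \(0\leq s\lesssim\log(1/r)\), use \(\abs{u(\xi)-u(\hat g)}\leq 2\norm{u}_{\infty}\), and estimate the mass of each annulus. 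The decisive point is the uniform bound \(\int_{A_{s}}\tilde P_{g}\,d\mu\lasymp 1/d(x,gx)\): writing \(\gamma(s)\) for the point at distance \(s\) from \(x\) on a ray from \(x\) to \(\hat g\), a thin-triangle estimate for the Gibbs cocycle yields \(C^{F-\sigma}_{\xi}(x,gx)-d^{F-\sigma}(x,gx)\approx-2d^{F-\sigma}(x,\gamma(s))\) on \(A_{s}\), so by Proposition~\ref{prop:harish-chandra-gibbs} one has \(\tilde P_{g}(\xi)\lasymp e^{-d^{F-\sigma}(x,\gamma(s))}/d(x,gx)\) there, while the Gibbs shadow lemma of \cite{Paulin2015} gives \(\mu(A_{s})\lasymp\mu\bigl(B_{d_{x}}(\hat g,e^{-s})\bigr)\asymp e^{d^{F-\sigma}(x,\gamma(s))}\); the two exponentials cancel. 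Summing the \(\lesssim\log(1/r)\) annuli yields a far contribution \(\lasymp\norm{u}_{\infty}\log(1/r)/d(x,gx)\), which is the \(C_{\epsilon}/d(x,gx)\) term with \(C_{\epsilon}\asymp\log(1/\epsilon)\).

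The \emph{main obstacle} is precisely this annular mass estimate: it is the only place where the fine geometry of the Gibbs stream enters, namely the shadow lemma for \(\mu^{F}\) together with the thin-triangle comparison of \(C^{F-\sigma}_{\xi}(x,gx)\) with \(d^{F-\sigma}(x,\gamma(s))\). Both rely on convex cocompactness, so that geodesics between orbit points stay in a compact part of \(M\) and the H\"older potential is controlled, and on the symmetry of \(F\), already used in Proposition~\ref{prop:harish-chandra-gibbs}. The cancellation between the shadow measure and the normalized density is what produces the uniform \(1/d(x,gx)\) decay, while the impossibility of controlling the near-\(\hat g\) region by anything better than total mass is exactly what forces the residual \(\epsilon\), in contrast with the sharp \((1+\abs{g})^{-1/D}\) bound available under Ahlfors regularity in \cite{Garncarek2014}.
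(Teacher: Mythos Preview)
Your proposal is correct and follows essentially the same route as the paper: the same initial reduction to the two integrals \(I_{w}\) and \(I_{v}\) (the paper simply asserts ``both terms are similar'' where you carry out the change of variables explicitly), and the same near/far split around \(\hat g\) with the Lipschitz bound on the near part. The only difference is in the far part: the paper bounds \(\abs{w(\xi)-w(\hat g)}\leq 2\norm{w}_{\infty}\) and then invokes \cite[Lemma 5.1]{BoyerMayeda2016} as a black box to get \(\langle\pi(g)\one,\one_{\bd X\setminus B}\rangle/\Xi(g)\leq C_{\epsilon}/d(x,gx)\), whereas you unpack this mass-concentration estimate via a dyadic annular decomposition and the Gibbs shadow lemma---which is essentially the content of the cited lemma, so the two arguments coincide at the level of ideas.
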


\begin{proof}
  Following the proof of \cite[Lemma 5.3]{Garncarek2014} we have
  \begin{equation}
    \begin{split}
      \abs{ \langle \tilde{\pi}(g)v,w \rangle - v(\check{g})
        \conj{w(\hat{g})} } & \leq \norm{v}_{\infty} \int_{\partial X}
      \frac{e^{C_{\xi}^{F-\sigma}(x,gx)/2}}{\Xi(g)} \abs{w(\xi) -
        w(\hat{g})} \,d\mu(\xi) \\ 
      & + \norm{w}_{\infty} \int_{\partial X}
      \frac{e^{C_{\xi}^{F-\sigma}(x,g^{-1}x)/2}}{\Xi(g^{-1})}
      |v(\xi)-v(\check{g})| \,d\mu(\xi).
    \end{split}
  \end{equation}
  Both terms are similar, so it is enough to study only the first one. Take \(\epsilon>0\) and define $B=B(\hat{g},\epsilon)$. We will estimate the integrals over \(B\) and \(\bd X\setminus B\) separately.

  Since $w$ is in $\Lip(\bd X, d_{o})$, we have
  \begin{equation}
    \int_{B}  \frac{e^{C_{\xi}^{F-\sigma}(x,gx)/2}}{\Xi(g)} |w(\xi)-w(\hat{g})| \,d\mu(\xi)\lasymp_{w} \epsilon .
  \end{equation}
  The integral over \(\bd X\setminus B\) is controlled thanks to the estimate \cite[Lemma 5.1]{BoyerMayeda2016}. More precisely we have  
  \begin{equation}
  \int_{\partial X\backslash B}  \frac{e^{C_{\xi}^{F-\sigma}(x,gx)}}{\Xi(g)} |w(\xi)-w(\hat{g})| \, d \mu(\xi) \lasymp_{w} \frac{\langle \pi(g)\one, \one_{\partial X\backslash B}\rangle}{\Xi(g)}
  \leq \frac{C_{\epsilon}}{d(x,gx)},
  \end{equation}
  where the last inequality follows from  \cite[Lemma 5.1]{BoyerMayeda2016} and complete the proof.
\end{proof}

We now obtain the following asymptotic Schur orthogonality relations for quasi\-regular representations associated with Gibbs streams. Again, the assumption of non-arithmeticity guarantees the mixing property of the geodesic flow in the context of Gibbs measures, see \cite[Theorem 8.1]{Paulin2015}. By taking $f_{2}=\one_{\ov{X}}$ and $v_{2}=w_{2}=\one_{\bd{X}}$ we recover Theorem A in \cite{BoyerMayeda2016}, another ergodic theorem \`a la Bader-Muchnik, used to prove irreducibility of the boundary representation. 
\begin{theorem}
Assume that $\Gamma$ is convex cocompact with a non-arithmetic spectrum, and that the potential $F$ is symmetric. Then for sufficintly large \(h>0\), all $f_{1},f_{2}\in C(\ov X)$ and all $v_{1},v_{2},w_{1},w_{2} \in L^{2}(\bd X, \mu)$ we have
\begin{multline}
\lim _{R \to \infty}C  e^{-\sigma R}\sum_{ g\in S_{R,h}} e^{d^{F}(x,gx)} f_{1}(g) f_{2}(g^{-1}) \langle \tilde{\pi}(g)v_{1}, w_{1}\rangle \conj{\langle \tilde{\pi}(g)v_{2}, w_{2}\rangle}= \\
 = \langle f_{2}|_{\bd X} v_{1},v_{2}\rangle
      \conj{\langle w_{1},f_{1}|_{\bd X} w_{2}\rangle},
\end{multline}
where
\begin{equation}
  C = \frac{\sigma \norm{m_{F}}}{1-e^{-h\sigma}}.
\end{equation}
\end{theorem}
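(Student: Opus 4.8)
**

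The final theorem is the Gibbs-stream analogue of the Asymptotic Orthogonality Theorem, and the plan is to mirror the structure of the proof of Theorem~\ref{prop:asymptotic-orthogonality-general} exactly, replacing each ingredient by its Gibbs counterpart. First I would reformulate the left-hand side using the equidistribution supplied by Theorem~\ref{theo:roblin-pps}. Writing
\begin{equation}
  d\mu_{R}(g) = C e^{-\sigma R} e^{d^{F}(x,gx)} \, \one_{S_{R,h}}(g),
\end{equation}
the Paulin--Pollicott--Schapira theorem says precisely that $J_{*}\mu_{R} \to \mu\otimes\mu$ in the weak-$*$ topology on $C(\ov X\times\ov X)^{*}$, so these $\mu_{R}$ play the role that the equidistributing measures of Theorem~\ref{prop:equidistribution} played before; note the weights are \emph{not} bounded above by $1/\card{S_{R,h}}$ in general, so I cannot use the uniform normalization, but the total mass $\mu_{R}(\Gamma)$ is bounded, which is all that the convergence argument needs.

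The proof then splits into the two lemmas used before. For the uniform-boundedness lemma (the analogue of Lemma~\ref{prop:lemma-as-ort-uni-bound}), I would bound
\begin{equation}
  \abs{\Phi_{R}(v_{1},v_{2},w_{1},w_{2})} \lasymp \sum_{g\in S_{R,h}} \frac{\mu_{R}(g)}{\Xi(g)^{2}} \abs{\langle\pi(g)v_{1},w_{1}\rangle\langle\pi(g)v_{2},w_{2}\rangle},
\end{equation}
and here the Harish-Chandra estimate of Proposition~\ref{prop:harish-chandra-gibbs}, namely $\Xi(g)\asymp e^{d^{F-\sigma}(x,gx)/2}(1+d(x,gx))$, combined with the weight $e^{d^{F}(x,gx)}$ and the normalizing factor $e^{-\sigma R}$, should produce exactly a factor $(1+R)^{-2}$ after cancellation on the annulus $S_{R,h}$. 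Property RD in the annular form (Proposition~\ref{prop:annular-RD-matrix-coeffs}, applied to the metric $d_{\Gamma}^{+}$ and the representation $\pi$, which is weakly contained in the regular representation since the boundary action is amenable) then gives the Cauchy--Schwarz bound by $(1+R)^{2}\norm{v_{1}}\norm{w_{1}}\norm{v_{2}}\norm{w_{2}}$, and the two powers of $(1+R)$ cancel. For the convergence lemma (the analogue of Lemma~\ref{prop:lemma-as-ort-lip-convergence}), I would restrict to Lipschitz $v_{i},w_{i}$, introduce the continuous function $F(g,h)=f_{1}(g)f_{2}(h)v_{1}(p(h))\conj{w_{1}(p(g))v_{2}(p(h))}w_{2}(p(g))$ on $\ov X\times\ov X$, and split the error as before into $\int F\,dJ_{*}\mu_{R}$, which converges to the stated right-hand side by Theorem~\ref{theo:roblin-pps}, plus a remainder controlled by the coefficient estimate $\abs{\langle\tilde{\pi}(g)v,w\rangle - v(\check g)\conj{w(\hat g)}}$.

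The place where the Gibbs case genuinely departs from the Patterson--Sullivan case, and which I expect to be the main obstacle, is this last coefficient estimate. In the original argument Lemma~5.3 of \cite{Garncarek2014} gave a clean decay of order $(1+\abs{g})^{-1/D}$, which tends to $0$; but Lemma~\ref{lemma:estim-coeff} here only furnishes the weaker bound $\epsilon + C_{\epsilon}/d(x,gx)$. The resolution is the standard $\epsilon$-then-$R$ argument: for fixed $\epsilon>0$ the term $C_{\epsilon}/d(x,gx)\to 0$ uniformly on $S_{R,h}$ as $R\to\infty$, so
\begin{equation}
  \limsup_{R\to\infty} \abs{\Phi_{R} - \int F\,dJ_{*}\mu_{R}} \lasymp_{v_{i},w_{i}} \epsilon \cdot \sup_{R}\mu_{R}(\Gamma),
\end{equation}
and since $\epsilon$ was arbitrary and $\mu_{R}(\Gamma)$ is bounded, the $\limsup$ is $0$. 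I would have to check that the cross-term manipulation of \eqref{eq:orthogonality-proof-2} still closes with this weaker estimate, but the same telescoping $\abs{\alpha_{1}\conj\alpha_{2}-\beta_{1}\conj\beta_{2}}\leq\abs{\alpha_{1}}\abs{\alpha_{2}-\beta_{2}}+\abs{\alpha_{1}-\beta_{1}}\abs{\beta_{2}}$ works, using that $\abs{\beta_{i}}\leq\norm{v_{i}}_{\infty}\norm{w_{i}}_{\infty}$ and that $\abs{\alpha_{i}}$ is bounded by $\abs{\beta_{i}}$ plus the same error. Finally, uniform boundedness from the first lemma lets me pass from Lipschitz vectors to arbitrary $L^{2}$ vectors by density, completing the proof.
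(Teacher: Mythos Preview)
Your proposal is correct and follows essentially the same route as the paper's own proof, which is just a terse sketch: adapt Theorem~\ref{prop:asymptotic-orthogonality-general}, use the Harish-Chandra estimate of Proposition~\ref{prop:harish-chandra-gibbs} together with the identity $d^{F}(x,gx)-\sigma d(x,gx)=d^{F-\sigma}(x,gx)$ to get $\mu_{R}(g)/\Xi(g)^{2}\asymp(1+R)^{-2}$ for the uniform boundedness, and use Lemma~\ref{lemma:estim-coeff} with the $\epsilon$-then-$R$ argument for the convergence on Lipschitz vectors. You have in fact supplied more detail than the paper does, including the observation that the weights are not uniformly $\lasymp\omega^{-R}$ but that only the cancellation against $\Xi(g)^{2}$ and the boundedness of the total mass are needed.
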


The proof is a straightforward adaptation of the proof of the Asymptotic Orthogonality Theorem~\ref{prop:asymptotic-orthogonality-general}. To show the uniform boundedness part, one just needs to observe that for \(g\in S_{R,h}\)
\begin{equation}
  \frac{Ce^{-\sigma R} e^{d^{F}(x,gx)}}{\Xi(g)^{2}} \asymp_{h} \frac{ C e^{ d^{F}(x,gx)-\sigma d(x,gx)}}{\Xi(g)^{2}} = \frac{ C e^{ d^{F-\sigma}(x,gx)}}{\Xi(g)^{2}}\asymp \frac{1}{(1+R)^{2}}.
\end{equation}
For the convergence part, one uses Lemma~\ref{lemma:estim-coeff}.

\section{Application to monotony}
\label{sec:application-monotony}

Here we will present an example of application of our results. We will deal with the phenomenon of \emph{monotony} of representations, described by Kuhn and Steger in \cite{Kuhn2001}. 

Let \(\Gamma\) be a non-abelian free group. By a \emph{generalized boundary representation} (called \emph{boundary representation} in \cite{Kuhn2001}) of \(\Gamma\) on a Hilbert space \(\mathcal{H}\) we will understand a pair \((\sigma,\rho)\), where
\begin{enumerate}
\item \(\sigma\) is a unitary representation of \(\Gamma\) on
  \(\mathcal{H}\),
\item  \(\rho\) is a representation of the \(C^{*}\)-algebra \(C(\bd\Gamma)\) on \(\mathcal{H}\),
\item \(\sigma\) and \(\rho\) satisfy the condition \(\sigma(g)\rho(f)\sigma(g^{-1}) = \rho(f\circ g^{-1})\) for all \(g\in\Gamma\) and \(f\in C(\bd\Gamma)\).
\end{enumerate}
In other words, this is a representation of the crossed product $C(\bd \Gamma)\rtimes \Gamma$, where $\Gamma$ acts on $C(\bd \Gamma)$ via $g \cdot f=f \circ g^{-1}$.

Now, let \(\pi\) be a unitary representation of \(\Gamma\) on a Hilbert space \(\mathcal{H}\). A \emph{boundary realization} of \(\pi\) on a Hilbert space \(\mathcal{H}'\) is a triple \((\iota,\sigma,\rho) \), where \((\sigma,\rho)\) is a generalized boundary representation of \(\Gamma\) on \(\mathcal{H}'\), and \(\iota\colon \mathcal{H}\to\mathcal{H}'\) is a \(\Gamma\)-equivariant isometric embedding, such that \(\rho(C(\bd\Gamma))\iota(\mathcal{H})\) is dense in \(\mathcal{H}'\). If \(\iota\) is bijective, the realization is said to be \emph{perfect}.

Boundary representations studied herein come equipped with tautological perfect realizations. Indeed, if \(\pi\) is a boundary representation on \(\mathcal{H}=L^{2}(\bd\Gamma,\mu)\), then the pair \((\pi,m)\), where \(m\) is the multiplication action \(m(f)\phi=f\phi\) of \(C(\bd\Gamma)\) on \(\mathcal{H}\), is a generalized boundary representation of \(\Gamma\), and together with the identity map of \(\mathcal{H}\) it yields a perfect boundary realziation of \(\pi\).

The fundamental idea of Kuhn and Steger in \cite{Kuhn2001} is the use of boundary realizations to classify  representations of the free group weakly contained in the regular representation. They propose three classes of representations, \emph{monotonous, duplicitous, and odd}, distinguished by the behavior of their boundary realizations, and inspired by the case where \(\Gamma\) is a lattice in \(SL_{2}(\RR)\).

Let \(\pi\) be a unitary representation of \(\Gamma\), and let  $(\iota,\sigma,\rho)$ and $(\iota',\sigma',\rho')$ be two boundary realizations of $\pi$, on Hilbert spaces \(\mathcal{H}\), and \(\mathcal{H}'\), respectively. We will consider them \emph{equivalent}, if there exists a unitary isomorphism $U \colon \mathcal{H} \to \mathcal{H}'$ intertwining \((\sigma,\rho)\) with \((\sigma',\rho')\), such that \(U\iota=\iota'\). 

We say that the representation \(\pi\) satisfies \emph{monotony}, if it admits a unique (up to equivalence) boundary realization, and moreover this realization is perfect. If \(\Gamma\) is a lattice in \(SL_{2}(\RR)\), then the restrictions of the endpoint  spherical principal  series representations to \(\Gamma\) are monotonous.

The representation \(\pi\) is said to satisfy \emph{duplicity}, if it admits exactly two non-equivalent perfect boundary realizations $(\iota_{1},\sigma_{1},\rho_{1})$ and $(\iota_{2},\sigma_{2},\rho_{2})$, and moreover any imperfect realization is, up to equivalence, a combination of these two, given by the embedding \(\iota(v)=t_{1}\iota_{1}(v)\oplus t_{2}\iota_{2}(v)\), where \(t_{1},t_{2}>0\), and \(t_{1}^{2}+t_{2}^{2}=1\). If \(\Gamma\) is a lattice in \(SL_{2}(\RR)\), this happens for the restrictions of non-endpoint principal series representations.

Finally, a representation \(\pi\) of \(\Gamma\) on a Hilbert space \(\mathcal{H}\) satisfies \emph{oddity}, if it admits a unique boundary realization \((\iota,\sigma,\rho)\) on \(\mathcal{H}'\), this realization is imperfect, and moreover if \(\mathcal{K}\) is the orthogonal complement of \(\iota(\mathcal{H})\) in \(\mathcal{H}'\), then the restriction \(\rho\res_{\mathcal{K}}\) is irreducible, not equivalent to \(\pi\), and \((\id_{\mathcal{K}},\rho,\sigma)\) is its unique boundary realization. In case where \(\Gamma\) is a lattice in \(SL_{2}(\RR)\), this situation happens for the restrictions of the mock discrete series representations.

\begin{conjecture}[Kuhn-Steger]
Any irreducible unitary representation of $\Gamma$ weakly contained
in the regular representation is monotonous, or duplicitous, or odd.
\end{conjecture}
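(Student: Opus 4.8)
The plan is to reduce the full trichotomy to an asymptotic Schur orthogonality relation for the representation $\pi$ itself, and then to read off monotony, duplicity, or oddity from the structure of the resulting boundary pairing under the flip $g\mapsto g^{-1}$. Since $\pi$ is irreducible and weakly contained in the regular representation, it admits at least one boundary realization $(\iota,\sigma,\rho)$ on some $L^{2}(\bd\Gamma,\mu)$, and under $\iota$ the representation $\sigma$ is the quasi-regular representation $\pi_{\mu}$. The first step is therefore to fix such a realization and to work inside $L^{2}(\bd\Gamma,\mu)$, treating $\iota(\mathcal H)$ as a cyclic subspace for the $C(\bd\Gamma)$-action $\rho$.

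The analytic engine is the annular property RD estimate, Proposition~\ref{prop:annular-RD-matrix-coeffs}, which is stated for \emph{any} unitary representation weakly contained in the regular representation; in particular it applies verbatim to $\pi$ and gives uniform $\ell^{2}$-control of the matrix coefficients $\langle\pi(g)v,w\rangle$ on the annuli $A_{R}$. Mirroring the proof of the Asymptotic Orthogonality Theorem~\ref{prop:asymptotic-orthogonality-general}, this control yields uniform boundedness of the normalized quadrilinear forms $\Phi_{R}$ built from the coefficients of $\pi$, exactly as in the first reduction of that proof. The second step is then to establish convergence of these forms on a dense set of vectors and extend it by density, which for a Patterson-Sullivan measure is precisely Lemma~\ref{prop:lemma-as-ort-lip-convergence}, obtained through the boundary-value approximation $\langle\tilde\pi(g)v,w\rangle\approx v(\check g)\conj{w(\hat g)}$ together with the Harish-Chandra asymptotics $\Xi(g)\asymp\omega^{-\abs{g}/2}(1+\abs{g})$ and the equidistribution of Theorem~\ref{prop:equidistribution}.

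The trichotomy would then be extracted from the limiting form, which pairs the incoming boundary data (the $\check g$-coordinate) of $v_{1},v_{2}$ against the outgoing data (the $\hat g$-coordinate) of $w_{1},w_{2}$. Thus it factors through two boundary maps $b^{+},b^{-}\colon\mathcal H\to L^{2}(\bd\Gamma,\mu)$ attached to the two ends of a geodesic, and the relation $\tilde\pi(g^{-1})=\tilde\pi(g)^{*}$ is what exchanges them. Monotony corresponds to $b^{+}$ and $b^{-}$ having the same injective cyclic image, so that $\iota$ is forced to be surjective and the realization is unique and perfect; duplicity corresponds to $b^{+}$ and $b^{-}$ giving two genuinely distinct perfect realizations whose unit convex combinations exhaust the imperfect ones; and oddity corresponds to the boundary map dropping rank, so that $\iota(\mathcal H)$ is a proper subspace whose orthogonal complement $\mathcal K$ carries the dual irreducible $\rho\res_{\mathcal K}$. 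The soft part of the argument is then the functional-analytic bookkeeping showing these are the only possibilities compatible with irreducibility.

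The main obstacle, and the reason this remains the full \emph{conjecture} rather than a theorem, is that a boundary realization of an arbitrary tempered irreducible $\pi$ lives on an \emph{arbitrary} quasi-invariant measure $\mu$, which need not lie in any Patterson-Sullivan class: it may fail Ahlfors regularity and quasi-conformality, and for it neither the Harish-Chandra estimate nor the boundary-value approximation underlying Lemma~\ref{prop:lemma-as-ort-lip-convergence} is available. Consequently the convergence step, and with it the entire scheme, breaks down outside the Patterson-Sullivan setting. Bridging this gap appears to require a rigidity statement asserting that the measure class underlying any boundary realization of a tempered irreducible representation is comparable to a Patterson-Sullivan class, so that the equidistribution and asymptotic orthogonality machinery transfers, together with a uniqueness argument bounding how many such classes can occur. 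Proving this rigidity, rather than packaging the trichotomy, is where the real difficulty lies, and it is exactly the point at which the present paper retreats to establishing monotony only for the boundary representations $\pi_{\mu_{PS}}$ themselves.
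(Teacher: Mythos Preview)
The statement you are attempting is the Kuhn--Steger \emph{conjecture}; it is open and the paper does not prove it. The paper only establishes the special case that the quasi-regular representations $\pi_{\mu_{PS}}$ associated to Patterson--Sullivan measures on a free group are monotonous (Theorem~\ref{prop:free-group-monotony}), and it does so by a completely different route: it shows that such $\pi$ \emph{cannot} satisfy the Good Vector Bound~\eqref{eq:GVB}, and then invokes the Kuhn--Steger criterion (Proposition~\ref{GVBprop}) that failure of GVB implies monotony. No analysis of the boundary maps $b^{\pm}$ or of the trichotomy structure is carried out.

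You yourself identify the decisive gap: for an arbitrary tempered irreducible $\pi$ the boundary realization lives on a quasi-invariant measure $\mu$ with no a priori regularity, so neither the Harish-Chandra asymptotics nor Lemma~\ref{prop:lemma-as-ort-lip-convergence} is available, and the convergence half of the asymptotic orthogonality scheme collapses. That is correct, and it is why your proposal is a heuristic outline rather than a proof. But there are further unjustified steps even before that point. First, the assertion that every tempered irreducible admits a boundary realization on some $L^{2}(\bd\Gamma,\mu)$ is itself a nontrivial input (it is a theorem for free groups, not something one can assume for free). Second, your extraction of the trichotomy from the limiting quadrilinear form is entirely schematic: you posit maps $b^{+},b^{-}$ and assign the three cases to three possible configurations of their images, but you give no argument that these configurations are exhaustive, nor that they actually correspond to the precise definitions of monotony, duplicity, and oddity (which involve uniqueness statements about \emph{all} realizations, not just the one you fixed). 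The ``soft functional-analytic bookkeeping'' you allude to is in fact the heart of the matter and is nowhere supplied.

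In short: there is no proof in the paper to compare against, and your proposal is an informed sketch that correctly locates one major obstruction but leaves several others unaddressed.
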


Examples of representations of non-abelian free groups satisfying monotony were defined in \cite{Kuhn1996}, and the proof of their monotony can be found in \cite{Kuhn2001}. The authors mention that ``monontony is a generic condition''.
 
\begin{remark} Monotony of a boundary representation can be thought of as a \emph{rigidity} phenomenon. Indeed, given a quasi-regular representation $\pi_{\mu}$ associated with a measure $\mu$ on the boundary $\bd \Gamma$ of the free group, there is a unique (up to equivalence) way  for the algebra of continuous functions $C(\partial \Gamma)$ to act on $L^{2}(\partial \Gamma,\mu)$ while satisfying the cross product relation with $\pi_{\mu}$, namely the multiplication action. In other words, there is an unique way to extend the unitary representation $\pi_{\mu}$ of $\Gamma$ to a representation of the cross product $C(\partial \Gamma)\rtimes \Gamma$.
\end{remark}

Whereas we do not investigate in this paper the phenomenon of duplicity or oddity, one of the consequences of our Asymptotic Orthogonality Theorem is a new strategy for proving monotony of a wider class boundary representations. It allows us to obtain the following result, strengthening the speculation of Kuhn and Steger about monotony as a generic condition.

\begin{theorem}\label{prop:free-group-monotony}
  Let \(\Gamma\) be a non-abelian free group, and let $\mu$ be the Patterson-Sullivan measure on \(\bd\Gamma\) associated with a metric $d\in\mathcal{D}(\Gamma)$. Then the associated boundary representation $\pi$ satisifes monotony.
\end{theorem}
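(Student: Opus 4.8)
The plan is to show that \emph{every} boundary realization of \(\pi\) is equivalent to the tautological perfect one \((\id,\pi,m)\) coming from the multiplication action \(m\) of \(C(\bd\Gamma)\) on \(\mathcal{H}=L^{2}(\bd\Gamma,\mu)\); since that realization is perfect, this simultaneously yields uniqueness and perfectness, hence monotony. So I would fix an arbitrary boundary realization \((\iota,\sigma,\rho)\) of \(\pi\) on a Hilbert space \(\mathcal{H}'\). Because \(\iota\) is a \(\Gamma\)-equivariant isometry intertwining \(\pi\) and \(\sigma\), the matrix coefficients of \(\sigma\) between vectors of \(\iota(\mathcal{H})\) agree with those of \(\pi\), i.e.\ \(\langle\sigma(g)\iota v,\iota w\rangle=\langle\pi(g)v,w\rangle\). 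The whole statement then reduces to the single \emph{master identity}
\[
  \langle\rho(f_{1})\iota v_{1},\rho(f_{2})\iota v_{2}\rangle=\int_{\bd\Gamma}f_{1}v_{1}\,\conj{f_{2}v_{2}}\,d\mu, \qquad f_{i}\in C(\bd\Gamma),\ v_{i}\in\mathcal{H}.
\]
Indeed, granting this, expanding \(\norm{\rho(f)\iota v-\iota(fv)}^{2}\) and using that \(\rho\) is a unital \(*\)-representation (so \(\rho(f)^{*}=\rho(\conj f)\), \(\rho(\one)=\id\)) gives \(0\) by three applications of the identity; hence \(\rho(f)\iota v=\iota(fv)\in\iota(\mathcal{H})\). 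As \(\rho(C(\bd\Gamma))\iota(\mathcal{H})\) is dense and \(\iota(\mathcal{H})\) closed, \(\iota\) is onto, so the realization is perfect and \(\rho=\iota\,m(\cdot)\,\iota^{-1}\) is forced, giving the equivalence with the tautological realization.

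Writing \(\psi=\conj{f_{2}}f_{1}\) and \(v=v_{1},\ w=v_{2}\), the master identity is equivalent to \(\langle\rho(\psi)\iota v,\iota w\rangle=\int\psi v\conj w\,d\mu\). This is where I would invoke the Asymptotic Orthogonality Theorem~\ref{prop:asymptotic-orthogonality-general}, which transports verbatim to \(\tilde\sigma(g)=\sigma(g)/\Xi(g)\) acting on \(\iota\)-vectors, since the relevant matrix coefficients coincide with those of \(\pi\). The device that lets \(\rho\) enter is the crossed-product relation \(\sigma(g)\rho(\psi)\sigma(g^{-1})=\rho(\psi\circ g^{-1})\): inserting \(\rho(\psi)\) into one slot of the quadrilinear form and moving it past \(\tilde\sigma(g)\) converts it into \(\rho(\conj\psi\circ g)\). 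By the strong contraction of the \(\Gamma\)-action on \(\bd\Gamma\) (north--south dynamics, available for every nontrivial element of a free group), \(\conj\psi\circ g\to\conj{\psi(\hat g)}\) uniformly off any fixed neighbourhood of \(\check g\); thus \(\rho(\conj\psi\circ g)\iota v=\conj{\psi(\hat g)}\,\iota v+\rho(s_{g})\iota v\), where \(s_{g}\) is, up to a uniformly small sup-norm error, supported on a boundary ball about \(\check g\) of \(\mu\)-measure tending to \(0\). The scalar \(\conj{\psi(\hat g)}\) is precisely the weight that appears in Theorem~\ref{prop:asymptotic-orthogonality-general}, and the corresponding main term is an honest Asymptotic Orthogonality sum; comparing it with the same computation carried out for the tautological realization identifies its limit as \(\int\psi v\conj w\,d\mu\).

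The entire weight of the argument therefore falls on the \emph{correction terms} built from \(\rho(s_{g})\), and controlling these is the main obstacle. It is genuinely analytic rather than formal: the trivial estimate \(\norm{\rho(s_{g})\iota v}\le\norm{s_{g}}_{\infty}\norm v\) is only of constant order, while \(\tilde\sigma(g)\) has operator norm of order \(\omega^{\abs g/2}/(1+\abs g)\), so no absolute-value bound can succeed and one must exploit cancellation. My strategy is a Cauchy--Schwarz splitting in the variable \(g\) in which one factor is a genuine \(\pi\)-coefficient, bounded in \(L^{2}(\mu_{R})\) uniformly in \(R\) by the annular property RD of Proposition~\ref{prop:annular-RD-matrix-coeffs} (which applies to \(\sigma\) because \(\sigma\) is weakly contained in the regular representation: the amenability of the \(\Gamma\)-action on \(\bd\Gamma\) makes the full and reduced crossed products \(C(\bd\Gamma)\rtimes\Gamma\) coincide, so any boundary realization has its underlying \(\Gamma\)-representation weakly contained in \(\lambda_{\Gamma}\)). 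The other factor carries the corrections, and I expect its \(L^{2}(\mu_{R})\)-mass to tend to \(0\) because \(s_{g}\) lives on sets of vanishing \(\mu\)-measure while the pairs \((\hat g,\check g)\) equidistribute for \(\mu\otimes\mu\) by Theorem~\ref{prop:equidistribution}. Establishing this decay rigorously is the crux of the proof.

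Passing to the limit then gives \(\langle\rho(\psi)\iota v,\iota w\rangle=\int\psi v\conj w\,d\mu\), hence the master identity and monotony. Two routine points remain to be checked at the end: first, that the reduction to Lipschitz \(v,w\) (where the boundary approximation \(\langle\tilde\sigma(g)\iota v,\iota w\rangle\approx v(\check g)\conj{w(\hat g)}\) is available) is legitimate, which follows from the uniform boundedness of the forms involved exactly as in the proof of Theorem~\ref{prop:asymptotic-orthogonality-general}; and second, that \(\rho\) is nondegenerate, so that \(\rho(\one)=\id\), which holds since \(\rho(C(\bd\Gamma))\iota(\mathcal{H})\) already contains the dense subspace \(\iota(\mathcal{H})\).
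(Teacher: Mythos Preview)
Your route is entirely different from the paper's, and the gap you yourself flag is real and, as far as I can see, not closable along the lines you sketch. The paper does \emph{not} analyse an arbitrary realization \((\iota,\sigma,\rho)\) at all. Instead it invokes a black-box criterion of Kuhn--Steger (Proposition~\ref{GVBprop}): a representation that fails the \emph{Good Vector Bound}
\[
  \sum_{g\in S_n}\abs{\langle\pi(g)v,w\rangle}^{2}\le C\norm{v}^{2}\qquad(\text{for some fixed }w\neq 0,\ \text{all }v)
\]
is automatically monotonous. The Asymptotic Orthogonality Theorem is then used only to produce a \emph{lower} bound: taking \(v_1=v_2=v\), \(w_1=w_2=w\), \(f_1=f_2=\one\) gives \(\norm{v}^{2}\norm{w}^{2}\lasymp\liminf_{R}\sum_{g\in A_{R,h}}\abs{\langle\pi(g)v,w\rangle}^{2}/(1+R)^{2}\), while GVB (transferred from \(d_w\) to \(d\) by quasi-isometry) would force \(\sum_{g\in A_{R,h}}\abs{\langle\pi(g)v,w\rangle}^{2}/(1+R)^{1+\epsilon}\lasymp\norm{v}^{2}\). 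For \(\epsilon<1\) these are incompatible, so GVB fails and monotony follows. No crossed-product manipulations, no abstract \(\rho\), no correction terms.

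The difficulty in your approach is not merely technical. After Cauchy--Schwarz you need
\[
  \sum_{g\in A_{R}}\frac{\mu_R(g)}{\Xi(g)^{2}}\,\abs{\langle\sigma(g)\rho(s_g)\iota v,\iota w\rangle}^{2}\longrightarrow 0,
\]
but \(\rho\) is an \emph{abstract} \(*\)-representation of \(C(\bd\Gamma)\): the only control you have on \(\rho(s_g)\) is \(\norm{\rho(s_g)}\le\norm{s_g}_\infty\), which is of constant order. Your heuristic that ``\(s_g\) lives on sets of vanishing \(\mu\)-measure'' is a statement about the \emph{multiplication} representation \(m\); for the unknown \(\rho\) the associated spectral measure need not be absolutely continuous with respect to \(\mu\), so small \(\mu\)-support of \(s_g\) buys you nothing. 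The equidistribution of \((\hat g,\check g)\) with respect to \(\mu\otimes\mu\) is likewise information about \(\mu\), not about \(\rho\). Nor can Proposition~\ref{prop:annular-RD-matrix-coeffs} help with this factor, since the vectors \(\rho(s_g)\iota v\) depend on \(g\). In short, the very circularity you are trying to break---showing that \(\rho\) must be \(m\)---is what you would need to control the error. The paper sidesteps this by never touching \(\rho\): the GVB criterion reduces monotony to a growth property of the matrix coefficients of \(\pi\) alone.
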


Our method will be still relying on the \emph{Good Vector Bound (GVB)} introduced by Kuhn and Steger. Endow \(\Gamma\) with its word-length \(d_{w}\) associated to its free basis, and denote by \(S_{n}\subseteq\Gamma\) the set of elements of length \(n\). A representation \(\pi\) of \(\Gamma\) on a Hilbert space \(\mathcal{H}\) satisfies GVB if there exists a non-zero vector \(w\in\mathcal{H}\) such that for all \(v\in\mathcal{H}\) we have
\begin{equation}\label{eq:GVB}
  \sum_{g \in S_{n}}\abs{\langle \pi(g)v,w \rangle}^{2}\leq C\norm{v}^{2}_{2}.
\end{equation}
The key result is the following.

\begin{proposition}[{\cite[Proposition 2.7 and Corollary 2.8]{Kuhn2001}}]\label{GVBprop}
If a unitary representation of $\Gamma$ does not satisfy the Good Vector Bound then it satisfies monotony.
\end{proposition}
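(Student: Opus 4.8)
I would prove the contrapositive: if $\pi$ is not monotonous, then $\pi$ satisfies the Good Vector Bound. Because $\pi$ is weakly contained in the regular representation it admits at least one boundary realization, so the negation of monotony means either that some realization is imperfect or that $\pi$ admits two inequivalent realizations. I would reduce the latter to the former by passing to the orthogonal sum of the two realizations, in which their discrepancy manifests as a nonzero complement. Thus in all cases I may assume a boundary realization $(\iota,\sigma,\rho)$ on a Hilbert space $\mathcal{H}'$ whose $\sigma$-invariant complement $\mathcal{K}=\iota(\mathcal{H})^{\perp}$ is nonzero, and the plan is to manufacture a good vector $w\in\mathcal{H}$ out of $\mathcal{K}$.

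The mechanism I would use couples $\iota(\mathcal{H})$ to $\mathcal{K}$ through the algebra action $\rho$. For $\eta\in\mathcal{K}$, $f\in C(\bd\Gamma)$ and $v\in\mathcal{H}$, the crossed-product relation $\sigma(g)\rho(f)=\rho(f\circ g^{-1})\sigma(g)$ together with the equivariance $\sigma(g)\iota(v)=\iota(\pi(g)v)$ gives
\begin{equation*}
  \langle \sigma(g)\rho(f)\iota(v),\eta\rangle = \big\langle \pi(g)v,\ \iota^{*}\rho(\overline{f\circ g^{-1}})\eta\big\rangle .
\end{equation*}
The \emph{direct} coefficients $\langle\sigma(g)\iota(v),\eta\rangle$ vanish identically, since $\mathcal{K}$ is $\sigma$-invariant and orthogonal to $\iota(\mathcal{H})$; it is exactly this vanishing of the leading term that forces the right-hand side to be governed by the boundary \emph{fluctuation} of $f\circ g^{-1}$, and this is the source of the extra decay. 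As $g$ runs over the sphere $S_{n}$ toward a boundary point, $f\circ g^{-1}$ localizes near the corresponding limit, so that $\iota^{*}\rho(\overline{f\circ g^{-1}})\eta$ degenerates to $0$ to leading order and these coefficients decay faster than a generic matrix coefficient. I would extract a fixed limiting vector $w\in\mathcal{H}$ from the boundary-value data attached to $\eta$ and take this $w$ as the good vector.

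The heart of the proof is then a harmonic-analytic estimate showing that this $w$ satisfies
\begin{equation*}
  \sum_{g\in S_{n}}\abs{\langle\pi(g)v,w\rangle}^{2}\lasymp\norm{v}^{2}
\end{equation*}
\emph{uniformly} in $n$. The difficulty is quantitative: for a generic vector the annular property RD of Proposition~\ref{prop:annular-RD-matrix-coeffs} only produces a bound growing like $(1+n)^{2}\norm{v}^{2}$, and the whole point is that the boundary-value vector coming from $\mathcal{K}$ must recover this polynomial factor. Making the localization of $f\circ g^{-1}$ quantitative---identifying the limit of the normalized spherical sums with an honest $L^{2}$ pairing against the spectral measure of $\rho$, with the errors summable over $S_{n}$---is the step I expect to be the main obstacle, since it couples the $\Gamma$-dynamics on the spheres to the $C(\bd\Gamma)$-module structure and must be controlled uniformly in the radius.

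Once the Good Vector Bound is seen to follow from the presence of a nonzero complement $\mathcal{K}$, the contrapositive is complete: the failure of GVB forces every boundary realization of $\pi$ to be perfect and forces all of them to coincide, which is exactly monotony. The hypothesis of weak containment in the regular representation enters twice, first to guarantee that a realization exists at all, and second to keep the relevant matrix coefficients within the scope of the property RD estimates underlying the spherical bounds.
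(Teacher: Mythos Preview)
The paper does not prove this proposition; it is quoted from \cite{Kuhn2001} and used as a black box in the proof of Theorem~\ref{prop:free-group-monotony}. There is therefore no proof in the paper to compare your attempt against.

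As for your proposal on its own terms, it is a strategy sketch rather than a proof, and you say as much. The contrapositive reformulation and the idea of exploiting a nonzero \(\sigma\)-invariant complement \(\mathcal{K}\) via the crossed-product relation are reasonable starting points, and they are indeed in the spirit of how Kuhn and Steger organize their argument. But the decisive step---producing a concrete vector \(w\in\mathcal{H}\) from the data of \(\mathcal{K}\) and proving the uniform spherical bound \(\sum_{g\in S_n}\abs{\langle\pi(g)v,w\rangle}^2\lasymp\norm{v}^2\)---is exactly the step you label ``the main obstacle'' and do not carry out. Your heuristic that \(f\circ g^{-1}\) ``localizes'' and that \(\iota^*\rho(\overline{f\circ g^{-1}})\eta\) ``degenerates to \(0\) to leading order'' is suggestive but not an estimate; making it one requires the specific combinatorics of the free group's tree (cylinder sets, prefix structure, explicit formulas for the action on the boundary) that Kuhn and Steger develop, and none of that is supplied here.

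There is also a looseness in your reduction from two inequivalent perfect realizations to an imperfect one. Passing to the direct sum \(\mathcal{H}_1'\oplus\mathcal{H}_2'\) with the diagonal embedding certainly produces a nonzero orthogonal complement, but you must still check that the result is a boundary realization in the required sense, i.e.\ that \(\rho(C(\bd\Gamma))\iota(\mathcal{H})\) is dense in the sum; this is where inequivalence is actually used, and it needs an argument.
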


\begin{proof}[Proof of Theorem~\ref{prop:free-group-monotony}]
  We shall prove that $\pi$ cannot satisfy GVB. Suppose to the contrary that \eqref{eq:GVB} holds for some nonzero \(w\in\mathcal{H}\). Since the metrics \(d\) and \(d_{w}\) are quasi-isometric, for all \(\epsilon,R>0\), and $v\in \mathcal{H}$ we obtain
  \begin{equation}\label{eq:appl-gvb}
    \begin{split}
      \sum_{g\in A_{R,h}}
      \frac{\abs{\langle\pi(g)v,w\rangle}^{2}}{(1+R)^{1+\epsilon}}
      & \lasymp_{h}  \sum_{g\in\Gamma} \frac{\abs{\langle \pi(g)
          v,w\rangle}^{2} }{(1+d(g))^{1+\epsilon}} \lasymp \\
      & \lasymp
      \sum_{g\in\Gamma} \frac{\abs{\langle \pi(g) v,w\rangle}^{2}}
      {(1+d_{w}(g))^{1+\epsilon}}\lasymp_{\epsilon,w} \norm{v}^{2}.
    \end{split}
  \end{equation}
  On the other hand, for sufficiently large \(h>0\), and a suitable family of measures \(\{\mu_{R}\}\), by the Asymptotic Orthogonality Theorem~\ref{prop:asymptotic-orthogonality-general} we have
  \begin{equation}
    \norm{v}^{2}\norm{w}^{2} = \lim_{R\to\infty} \int \abs{\langle \tilde{\pi}(g)v,w \rangle}^{2}\,d\mu_{R} \lasymp_{h} \liminf_{R\to\infty} \sum_{g\in A_{R,h}} \frac{\abs{\langle\pi(g)v,w\rangle}^{2}}{(1+R)^{2}},
  \end{equation}
  so for large \(R\)
  \begin{equation}
    \sum_{g\in A_{R,h}} \frac{\abs{\langle\pi(g)v,w\rangle}^{2}}{(1+R)^{1+\epsilon}} \rasymp_{h} (1+R)^{1-\epsilon}\norm{v}^{2}\norm{w}^{2}.
  \end{equation}
  If we choose \(\epsilon<1\), this contradicts \eqref{eq:appl-gvb}.
\end{proof}

Thanks to the work of \cite{Blachere2011}, Theorem~\ref{prop:free-group-monotony} covers also quasi-regular representations associated with harmonic measures of random walks. Fix a word metric on \(\Gamma\) corresponding to the free basis, and let \(\nu\) be a symmetric random walk on \(\Gamma\). We say that \(\nu\) has \emph{exponential moment}, if for some \(\lambda>0\) we have
\begin{equation}
  \sum_{g\in\Gamma} e^{\lambda\abs{g}} \nu(g) < \infty.
\end{equation}
Let \(F(g,h)\) denote the probability that a trajectory of the random walk starting at \(g\) passes through \(h\). We will assume that for any \(r>0\) there exists \(C(r)\) such that
\begin{equation}\label{eq:walk-condition}
  F(g,h) \leq C(r) F(g,k)F(k,h)
\end{equation}
for all \(k\) within distance \(r\) from a geodesic joining \(g\) and \(h\) in the Cayley graph of \(\Gamma\). Both this condition, and having exponential moment are satisfied by any finitely supported \(\nu\), and condition~\eqref{eq:walk-condition} is due to Ancona \cite{Ancona1990}. Together they guarantee that the \emph{Green metric}
\begin{equation}
  d_{\nu}(g,h) = -\log F(g,h)
\end{equation}
is in the class \(\mathcal{D}(\Gamma)\). By \cite[Theorems 1.1(ii) and 1.5]{Blachere2011}, the harmonic measure \(\mu\) on \(\bd\Gamma\) associated to the random walk \(\nu\) is equivalent to the Patterson-Sullivan measure corresponding to the Green metric \(d_{\nu}\). Hence the following.

\begin{corollary}
  Let \(\Gamma\) be a non-abelian free group, and let $\nu$ be a symmetric random walk on \(\Gamma\) with an exponential moment, satisfying condition~\eqref{eq:walk-condition}. If $\mu$ is the associated harmonic measure on \(\bd\Gamma\), then the corresponding quasi-regular representation \(\pi_{\mu}\) satisfies monotony.
\end{corollary}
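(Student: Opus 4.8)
The plan is to reduce the statement to Theorem~\ref{prop:free-group-monotony} by recognizing $\pi_{\mu}$, up to unitary equivalence, as a boundary representation attached to a metric in $\mathcal{D}(\Gamma)$. First I would confirm that the Green metric $d_{\nu}$ belongs to $\mathcal{D}(\Gamma)$: left-invariance follows from the translation invariance of the walk, since $F(g,h)$ depends only on $g^{-1}h$; the exponential moment hypothesis gives the quasi-isometry with the word metric; and Ancona's inequality \eqref{eq:walk-condition}, due to \cite{Ancona1990}, supplies the Gromov-product estimates forcing hyperbolicity. This is precisely the conclusion recalled above from \cite{Blachere2011}.

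Next I would invoke \cite[Theorems 1.1(ii) and 1.5]{Blachere2011}, which under these hypotheses place the harmonic measure $\mu$ in the Patterson-Sullivan class of $d_{\nu}$; in particular $\mu$ and the Patterson-Sullivan measure $\mu_{PS}$ of $d_{\nu}$ are equivalent. Since the unitary equivalence class of a quasi-regular representation depends only on the measure class of the underlying measure (Section~\ref{sec:bound-repr-hyperb}), the representation $\pi_{\mu}$ is unitarily equivalent to the boundary representation $\pi_{\mu_{PS}}$ corresponding to $d_{\nu}$.

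Finally I would observe that monotony is an invariant of the unitary equivalence class. If $U\colon L^{2}(\bd\Gamma,\mu)\to L^{2}(\bd\Gamma,\mu_{PS})$ intertwines $\pi_{\mu}$ with $\pi_{\mu_{PS}}$, then $(\iota,\sigma,\rho)\mapsto(\iota\circ U,\sigma,\rho)$ is a bijection between boundary realizations of $\pi_{\mu_{PS}}$ and of $\pi_{\mu}$ that preserves both perfectness (as $U$ is bijective) and the equivalence of realizations. Hence $\pi_{\mu}$ is monotonous if and only if $\pi_{\mu_{PS}}$ is, and the latter holds by Theorem~\ref{prop:free-group-monotony}. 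There is no serious obstacle here: all the analytic content sits in Theorem~\ref{prop:free-group-monotony} and in the cited results of \cite{Blachere2011}, and the only genuine check is the routine verification that transport along $U$ respects perfectness and the equivalence relation on realizations.
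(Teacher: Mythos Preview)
Your proposal is correct and follows exactly the argument the paper gives: the paragraph preceding the Corollary already records that \(d_{\nu}\in\mathcal{D}(\Gamma)\) and that the harmonic measure lies in the Patterson--Sullivan class of \(d_{\nu}\), after which the paper simply writes ``Hence the following.'' Your added check that monotony is a unitary invariant is a welcome bit of care, but the approach is identical.
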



\bibliographystyle{plain}
\bibliography{monotony}

\begin{thebibliography}{10}

\bibitem{Adams1994}
S.~Adams.
\newblock {Boundary amenability for word hyperbolic groups and an application
  to smooth dynamics of simple groups}.
\newblock {\em Topology}, 33(4):765--783, 1994.

\bibitem{Ancona1990}
A.~Ancona.
\newblock Th{\'e}orie du potentiel sur les graphes et les vari{\'e}t{\'e}s,
  \emph{Ecole d'{\'e}t{\'e} de Probabilit{\'e}s de Saint-Flour XVIII---1988}.
\newblock {\em Lecture Notes in Math. Springer, Berlin}, 1427:1--112, 1990.

\bibitem{Babillot2002}
Martine Babillot.
\newblock On the mixing property for hyperbolic systems.
\newblock {\em Israel J. Math.}, 129:61--76, 2002.

\bibitem{BaderMuchnik2011}
Uri Bader and Roman Muchnik.
\newblock Boundary unitary representations---irreducibility and rigidity.
\newblock {\em J. Mod. Dyn.}, 5(1):49--69, 2011.

\bibitem{Blachere2011}
S{\'{e}}bastien Blach{\`{e}}re, Peter Ha{\"{i}}ssinsky, and Pierre Mathieu.
\newblock {Harmonic measures versus quasiconformal measures for hyperbolic
  groups}.
\newblock {\em Ann. Sci. l'{\'{E}}cole Norm. Sup{\'{e}}rieure}, 44(4):683--721,
  2011.

\bibitem{Bourdon1995}
Marc Bourdon.
\newblock Structure conforme au bord et flot g{\'e}od{\'e}sique d'un {${\rm
  CAT}(-1)$}-espace.
\newblock {\em Enseign. Math. (2)}, 41(1-2):63--102, 1995.

\bibitem{Boyer2014}
Adrien Boyer.
\newblock Equidistribution, ergodicity and irreducibility in {${\rm CAT}(-1)$}
  spaces.
\newblock {\em arXiv}, 1412.8229, 2014.

\bibitem{BoyerLobos2016}
Adrien Boyer and Antoine~Pinochet Lobos.
\newblock An ergodic theorem for the boundary representations of free group.
\newblock {\em Accepted to Bulletin of the Belgian Mathematical Society,
  arXiv}, 1601.00668, 2016.

\bibitem{BoyerMayeda2016}
Adrien Boyer and Dustin Mayeda.
\newblock Equidistribution, ergodicity and irreducibility associated with
  {Gibbs} measures.
\newblock {\em Accepted to Commentarii Mathematici Helvetici, arXiv},
  1601.02275, 2016.

\bibitem{Bridson1999}
Martin~R. Bridson and Andr{\'{e}} Haefliger.
\newblock {\em {Metric Spaces of Non-Positive Curvature}}.
\newblock Springer, 1999.

\bibitem{BurgerMozes1996}
M.~Burger and S.~Mozes.
\newblock {${\rm CAT}(-1)$}-spaces, divergence groups and their commensurators.
\newblock {\em J. Amer. Math. Soc.}, 9(1):57--93, 1996.

\bibitem{Chatterji2016}
Indira Chatterji.
\newblock {Introduction to the Rapid Decay property}.
\newblock {\em arXiv}, 1604.06387, 2016.

\bibitem{Coornaert1993}
Michel Coornaert.
\newblock {Mesures de Patterson-Sullivan sur le bord d'un espace hyperbolique
  au sens de Gromov}.
\newblock {\em Pacific J. Math.}, 159(2):241--270, 1993.

\bibitem{Dalbo1999}
Fran{\c c}oise Dal'bo.
\newblock Remarques sur le spectre des longueurs d'une surface et comptages.
  (french) [remarks on the length spectrum of a surface, and counting].
\newblock {\em Bol. Soc. Brasil. Mat. (N.S.) 30, no. 2, 199-221}, 1999.

\bibitem{delaHarpe1988}
Pierre de~la Harpe.
\newblock {Groupes hyperboliques, alg{\`{e}}bres d'op{\'{e}}rateurs et un
  th{\'{e}}or{\`{e}}me de {J}olissaint}.
\newblock {\em C. R. Acad. Sci. Paris S{\'{e}}r. I Math.}, 307(14):771--774,
  1988.

\bibitem{Garncarek2014}
{\L}ukasz Garncarek.
\newblock {Boundary representations of hyperbolic groups}.
\newblock {\em arXiv}, 1404.0903, 2014.

\bibitem{Jolissaint1990}
Paul Jolissaint.
\newblock {Rapidly Decreasing Functions in Reduced $C^*$-Algebras of Groups}.
\newblock {\em Trans. Am. Math. Soc.}, 317(1):167--196, 1990.

\bibitem{Kuhn1994}
M.~Gabriella Kuhn.
\newblock {Amenable Actions and Weak Containment of Certain Representations of
  Discrete Groups}.
\newblock {\em Proc. Am. Math. Soc.}, 122(3):751--757, 1994.

\bibitem{Kuhn1996}
M.~Gabriella Kuhn and Tim Steger.
\newblock {More irreducible boundary representations of free groups}.
\newblock {\em Duke Math. J.}, 82(2):381--436, 1996.

\bibitem{Kuhn2001}
M.~Gabriella Kuhn and Tim Steger.
\newblock {Monotony of Certain Free Group Representations}.
\newblock {\em J. Funct. Anal.}, 179(1):1--17, 2001.

\bibitem{Margulis2004}
Grigory Margulis.
\newblock On some aspects of the theory of anosov systems.
\newblock {\em Springer Monographs in Mathematics, Springer-Verlag, Berlin,
  With a survey by Richard Sharp: Periodic orbits of hyperbolic flows,
  Translated from the Russian by Valentina Vladimirovna Szulikowska}, 2004.

\bibitem{Patterson1976}
S.~J. Patterson.
\newblock The limit set of a {F}uchsian group.
\newblock {\em Acta Math.}, 136(3-4):241--273, 1976.

\bibitem{Paulin2015}
Fr{{\'e}}d{{\'e}}ric Paulin, Mark Pollicott, and Barbara Schapira.
\newblock Equilibrium states in negative curvature.
\newblock {\em Ast{\'e}risque}, (373):viii+281, 2015.

\bibitem{Roblin2003}
Thomas Roblin.
\newblock Ergodicit{\'e} et {\'e}quidistribution en courbure n{\'e}gative.
\newblock {\em M{\'e}m. Soc. Math. Fr. (N.S.)}, (95):vi+96, 2003.

\bibitem{Sullivan1979}
Dennis Sullivan.
\newblock The density at infinity of a discrete group of hyperbolic motions.
\newblock {\em Inst. Hautes {\'E}tudes Sci. Publ. Math.}, (50):171--202, 1979.

\end{thebibliography}

\end{document}